\numberwithin{equation}{section}
\theoremstyle{plain}%
\newtheorem{theorem}{Theorem}
\numberwithin{theorem}{section}
\newtheorem{proposition}[theorem]{Proposition}
\newtheorem{example}[theorem]{Example}
\newtheorem{lemma}[theorem]{Lemma}
\newtheorem{corollary}[theorem]{Corollary}
\newtheorem{definition}[theorem]{Definition}
\newtheorem{remark}[theorem]{Remark}
\newcommand{\C}{\mathbb{C}}
\begin{document}

\title{\bf Higher-Distance Commuting Varieties}

    \author{Madeleine Elyze, Alexander Guterman$^{1,2,3}$, Ralph Morrison$^{4}$, and Klemen \v{S}ivic$^{5}$
}

\date{1 October, 2020. \newline $^{1}$ Lomonosov Moscow State University, Moscow, 119991, Russia. \newline     $^{2}$ Moscow Institute of Physics and Technology, Dolgoprudny, 141701, Russia.
\newline $^{3}$  Moscow Center for Fundamental and Applied Mathematics, Moscow, 119991, Russia.
\newline     $^{4}$ Williams College, Williamstown, MA 01267 USA
\newline $^{5}$ University of Ljubljana, Faculty of mathematics and physics, Jadranska 19, SI-1000 Ljubljana, Slovenia}

\maketitle

\begin{abstract}\noindent The commuting variety  of matrices over a given field is a well-studied object in linear algebra and algebraic geometry.  As a set, it consists of all pairs of square matrices with entries in that field that commute with one another.  In this paper we generalise the commuting variety by using the commuting distance of matrices.  We show that over an algebraically closed field, each of our sets does indeed form a variety.  We compute the dimension of the distance-$2$ commuting variety and characterize its irreducible components. We also work over other fields, showing that the distance-$2$ commuting set is a variety but that the higher distance commuting sets may or may not be varieties, depending on the field and on the size of the matrices.
\end{abstract}

\noindent\emph{Keywords:} commuting variety, commuting distance, commuting graph.

\medskip


\medskip

\noindent \emph{MSC:} 14M12, 15A27.

\section{Introduction}

Let $k$ be a field,  $n\geq 2$ an integer, and $\text{Mat}_{n\times n}(k)$ the set of all $n\times n$ matrices over $k$; sometimes we will simply write $\text{Mat}_{n\times n}$ when $k$ is implicit. We are interested in those pairs of matrices $A,B\in\text{Mat}_{n\times n}$ which commute with one another under matrix multiplication.   For instance, if a matrix is \emph{scalar}, that is, a constant multiple of the identity matrix, then all matrices commute with it.  One fruitful approach for studying all pairs of commuting matrices is to study the \emph{$n\times n$ commuting variety} \cite{GS, MT}.  We identify $\text{Mat}_{n\times n}^2$ with the $2n^2$-dimensional space $k^{2n^2}$, so that a pair of matrices $(A,B)\in\text{Mat}_{n\times n}^2$ corresponds to a point in $k^{2n^2}$.  As a set, the commuting variety consists of all points $(A,B)\in k^{2n^2}$ such that $AB=BA$.  It has the structure of an \emph{affine variety}, meaning that it is the solution set of a finite collection of polynomial equations.  In particular, it is defined by the $n^2$ polynomial equations of the form $(XY-YX)_{ij}=0$, where $X$ and $Y$ are $n\times n$ matrices filled with variables $x_{ij}$ and~$y_{ij}$.  This variety is known to be irreducible \cite{gur, MT}, meaning that it cannot be written as the union of two 
blue proper subvarieties.  This study of commutativity of matrices through varieties  can be generalised to varieties of commuting triples of matrices \cite{GS}, or commuting $p$-tuples of matrices.

In this paper we introduce other generalizations of the commuting matrix variety, and prove that these generalizations are indeed affine varieties.  To define them, we need a measure of how close two matrices are to commuting with one another.  We will use the well-studied \emph{commuting distance} of matrices, which are defined using the \emph{$n\times n$ commuting graph} $\Gamma(\text{Mat}_{n\times n})$ over $k$ \cite{AGHM}. This graph has vertices corresponding to the $n\times n$ non-scalar matrices over $k$, with an edge between $A$ and $B$ if and only if $AB=BA$.   With a view towards defining {distance}, a natural question to ask is whether $\Gamma(\text{Mat}_{n\times n})$ is connected, and if so what is its \emph{diameter}; that is, what is the supremum of the length of all shortest paths between pairs of vertices.
The answer to this question depends both on the field $k$ and on the integer $n$.  As noted in \cite[Remark 8]{AR} and discussed in Proposition \ref{prop:n=2}, for $n=2$ the graph $\Gamma(\text{Mat}_{n\times n})$ is disconnected.  However, it was shown in \cite{AMRR} that if $n\geq 3$ and $k$ is an algebraically closed field, then $\Gamma(\text{Mat}_{n\times n})$ is connected, and in fact $\text{diam}(\Gamma(\text{Mat}_{n\times n}))=4$.  The same holds when $k=\mathbb{R}$ \cite{shitov}.  Note that the situation is quite  different for other fields. For example, the graph $\Gamma(\text{Mat}_{n\times n})$ over the rationals ${\mathbb Q}$ is not connected \cite[Remark 8]{abm}.

Whenever $\Gamma(\textrm{Mat}_{n\times n}(k))$ is connected, for any two non-scalar matrices $A,B\in \text{Mat}_{n\times n}$ we can define $d_k(A,B)$ to be the distance between 
the corresponding vertices on the graph $\Gamma(\text{Mat}_{n\times n}(k))$; we will drop the subscript $k$ and simply write $d(A,B)$ when the field is implicit.  Under this definition, $d(A,B)=0$ if and only if $A=B$, and $d(A,B)=1$ if and only if $A\neq B$ and $A$ and $B$ commute.  We extend the function $d(\cdot\,,\cdot)$ to take \emph{any} two $n\times n$ matrices as input by letting $d(A,B)=1$ whenever $A$ or $B$ is a scalar matrix (unless $A=B$, in which case $d(A,B)=0$).  If $\Gamma(\text{Mat}_{n\times n})$ is not connected, then we take $d(A,B)=\infty$ whenever $A$ and $B$ are nonscalar matrices in different components of the graph.

We define the \emph{distance-$d$ commuting set} $\mathcal{C}^{d}_n(k)$ to be the set of pairs of $n\times n$ matrices with entries in $k$ that have commuting distance at most $d$.  That is,
 $$\mathcal{C}^{d}_n(k):=\{(A,B)\,|\, d(A,B)\leq d\}\subset \text{Mat}_{n\times n}^2.$$
When $k$ is clear from context, we  abbreviate this notation to $\mathcal{C}^{d}_n$. Perhaps the most familiar of these sets is $\mathcal{C}^{1}_n$, which is simply the $n\times n$ commuting variety.  Our main theorem shows that regardless of our choice of $d$, the set $\mathcal{C}^{d}_n$ is still an affine variety, at least over a field that is algebraically closed.

\begin{theorem}\label{theorem:main} Let $k$ be an algebraically closed field.  Then for any $d\geq 0$, the set $\mathcal{C}^{d}_n$ is an affine variety in the $2n^2$-dimensional affine space $\text{Mat}^2_{n\times n}$.  The same result holds for any field if $d\leq 2$.
\end{theorem}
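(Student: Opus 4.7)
The plan is to split the proof into three regimes. For $d \in \{0, 1\}$, the result is essentially definitional and holds over any field: $\mathcal{C}^0_n$ is the diagonal $\{(A, A)\}$ cut out by $x_{ij} - y_{ij} = 0$, and $\mathcal{C}^1_n$ is the familiar commuting variety cut out by $(XY - YX)_{ij} = 0$.

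For $d = 2$ over any field, I would first verify the elementary observation that $(A, B) \in \mathcal{C}^2_n$ if and only if some non-scalar $C$ commutes with both $A$ and $B$; unwinding the scalar-matrix conventions, this is case analysis (e.g., taking $C = A$, $C = B$, or a fixed auxiliary non-scalar as appropriate). Since the centralizer intersection $Z(A) \cap Z(B)$ always contains the scalar line, the condition is equivalent to $\dim\bigl(Z(A) \cap Z(B)\bigr) \geq 2$, i.e.\ to $\rk(L_{A,B}) \leq n^2 - 2$, where
\[
L_{A,B} : \text{Mat}_{n \times n} \to \text{Mat}_{n \times n}^2, \qquad C \mapsto \bigl(AC - CA,\; BC - CB\bigr).
\]
The $2n^2 \times n^2$ matrix of $L_{A,B}$ has entries polynomial in the entries of $A$ and $B$, and the vanishing of all its $(n^2 - 1) \times (n^2 - 1)$ minors is a polynomial system cutting out $\mathcal{C}^2_n$ over any field.

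For $d \geq 3$ over an algebraically closed field, I would project a projective incidence variety of commuting chains. The natural setup records a chain $A \leftrightarrow C_1 \leftrightarrow \cdots \leftrightarrow C_{d-1} \leftrightarrow B$ of pairwise-commuting non-scalar matrices, but ``$C_i$ non-scalar'' is an open, not closed, condition, which would wreck closedness of the image (one could always pad with $C_i = I$). To resolve this, I would descend modulo scalars: let $W = \text{Mat}_{n\times n}/kI$ (so $\dim W = n^2 - 1$), noting that non-scalar matrices correspond bijectively to nonzero classes, i.e.\ to \emph{all} points of $\PP(W) \cong \PP^{n^2-2}$. Since $[X, Y]$ is invariant under adding a scalar to either factor, each equation $[A, \bar C_1] = 0$, $[\bar C_i, \bar C_{i+1}] = 0$, $[\bar C_{d-1}, B] = 0$ is well-defined and (bi)homogeneous on $\PP(W)$, and together they cut out a closed subvariety
\[
Y_d \subset \text{Mat}^2_{n \times n} \times \PP(W)^{d-1}.
\]
The projection $\pi : Y_d \to \text{Mat}^2_{n \times n}$ is proper since $\PP(W)^{d-1}$ is projective, so $\pi(Y_d)$ is Zariski closed in $\text{Mat}^2_{n \times n}$. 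It then remains to check $\pi(Y_d) = \mathcal{C}^d_n$: the inclusion $\pi(Y_d) \subseteq \mathcal{C}^d_n$ follows by lifting each $[C_i]$ to a non-scalar representative, and the reverse inclusion follows by taking a shortest commuting-graph walk between $A$ and $B$ and padding at an endpoint with repeated copies of a fixed non-scalar matrix to reach exact length $d$ (using $A$ or $B$ themselves when non-scalar, and any fixed non-scalar such as $E_{12}$ in the corner case where both $A$ and $B$ are scalar).

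The main obstacle is this last case; the crux is the quotient-by-scalars descent, which simultaneously turns the open condition ``non-scalar'' into a closed setting (every class in $\PP(W)$ is admissible) and keeps the commutator equations homogeneous enough to define a closed projective subvariety. The remaining verification that $\pi(Y_d) = \mathcal{C}^d_n$ is routine case analysis depending on whether $A$ or $B$ is scalar, and the algebraically closed hypothesis is needed so that $k$-points of $\pi(Y_d)$ lift to $k$-points of $Y_d$.
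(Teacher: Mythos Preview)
Your proposal is correct. For $d\le 2$ it coincides with the paper's argument (Proposition~3.1 is exactly your rank bound on $L_{A,B}$). For $d\ge 3$ you take a genuinely different route.

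The paper handles $d=3$ by first proving the structural equality $\mathcal{C}^3_n=\mathcal{PC}_n$ (pairs for which some $p(A)$ and $q(B)$ commute with $1\le\deg p,\deg q\le n-1$), which in turn relies on the Dolinar--Kuzma--Oblak characterization of distance-$4$ pairs via non-derogatory matrices; only then is a projective incidence argument applied to $\mathcal{PC}_n$. For $d\ge 4$ the paper separately invokes the diameter bound $\mathrm{diam}(\Gamma)=4$ of Akbari--Mohammadian--Radjavi--Raja. Your argument bypasses both external inputs: the single incidence variety $Y_d\subset\text{Mat}_{n\times n}^2\times\PP(W)^{d-1}$ with $W=\text{Mat}_{n\times n}/kI$ works uniformly for all $d\ge 3$, and the padding argument for $\mathcal{C}^d_n\subseteq\pi(Y_d)$ is elementary. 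The paper does record essentially this idea in a remark attributed to Markman, but there it is phrased using trace-zero matrices as a complement to $kI$, forcing the extra hypothesis $\mathrm{char}\,k\nmid n$; your passage to the quotient $W$ rather than a section removes that restriction. What the paper's longer route buys is the identity $\mathcal{C}^3_n=\mathcal{PC}_n$ itself, which is of independent interest; what your route buys is a shorter, self-contained, characteristic-free proof of the theorem as stated.
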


There do exist choices of $k$, $n$, and $d$ such that $\mathcal{C}^{d}_n(k)$ is not a variety, as illustrated in the following proposition suggested to the authors by Jan Draisma.

\begin{proposition}\label{prop:d_geq_4}
Let $n\geq 3$ and $d\geq 4$.  The set $\mathcal{C}_n^d(k)$ is a variety if and only if $k$ is finite or $\textrm{diam}(\Gamma(\textrm{Mat}_{n\times n}(k)))\leq d$.
\end{proposition}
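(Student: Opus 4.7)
The ``if'' direction is immediate: when $k$ is finite, $\text{Mat}_{n\times n}(k)^2$ is itself a finite set and every subset is a finite union of point-varieties, hence a variety; when $\textrm{diam}(\Gamma(\text{Mat}_{n\times n}(k))) \leq d$, we have $\mathcal{C}_n^d(k) = \text{Mat}_{n\times n}(k)^2$, the trivial variety.

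For the ``only if'' direction, I would assume $k$ is infinite and $\textrm{diam}(\Gamma(\text{Mat}_{n\times n}(k))) > d$, and fix $(A_0, B_0) \in \text{Mat}_{n\times n}(k)^2$ with $d_k(A_0, B_0) > d$. The plan is to construct a polynomial family $(A(t), B(t)) \in \text{Mat}_{n\times n}(k[t])^2$ with $(A(0), B(0)) = (A_0, B_0)$ such that $(A(t), B(t)) \in \mathcal{C}_n^d(k)$ for all but finitely many $t \in k$. Concretely this family would come equipped with a polynomial chain $C_1(t), \ldots, C_{d-1}(t) \in \text{Mat}_{n\times n}(k[t])$ satisfying $A(t) C_1(t) = C_1(t) A(t)$, $C_i(t) C_{i+1}(t) = C_{i+1}(t) C_i(t)$ for $1 \leq i \leq d-2$, and $C_{d-1}(t) B(t) = B(t) C_{d-1}(t)$ as identities in $\text{Mat}_{n\times n}(k[t])$, with each $C_i(t)$ non-scalar off a finite subset of $k$. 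For such $t$ this chain certifies $d_k(A(t), B(t)) \leq d$, placing $(A(t), B(t))$ in $\mathcal{C}_n^d(k)$.

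Granted such a family, the argument concludes as follows. If $\mathcal{C}_n^d(k)$ were the zero set of polynomials $\{f_\alpha\} \subset k[x_{ij}, y_{ij}]$, then each $f_\alpha(A(t), B(t)) \in k[t]$ would vanish on an infinite subset of $k$ and, since $k$ is infinite, vanish identically. In particular $f_\alpha(A_0, B_0) = 0$ for every $\alpha$, forcing $(A_0, B_0) \in \mathcal{C}_n^d(k)$ and contradicting $d_k(A_0, B_0) > d$.

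The main obstacle is producing this family. Writing each object as a Taylor expansion in $t$ gives a cascade of commutator equations to solve order by order; the naive attempt to force each $C_i(0)$ to be scalar (so the chain degenerates precisely at $t = 0$) dead-ends quickly, because the resulting first-order conditions push the linear coefficients into $Z(A_0) \cap Z(B_0)$, which contains only scalars since $d_k(A_0, B_0) > 2$. Escaping this obstruction requires also letting $A(t)$ and $B(t)$ themselves deform nontrivially in $t$, and exploiting the fact that $\textrm{diam}(\Gamma(\text{Mat}_{n\times n}(\bar k))) \leq 4 \leq d$ \cite{AMRR} to supply a $\bar k$-chain above $(A_0, B_0)$ that can be spread into a $k[t]$-family by a Galois-descent argument. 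The hypotheses $n \geq 3$ (ensuring that rank-one matrices have large centralizers, so the chain variety above $(A_0, B_0)$ is rich enough) and $d \geq 4$ (providing enough room in the chain to absorb the descent) enter essentially here, and executing this descent is the technical heart of the argument.
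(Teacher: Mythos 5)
Your ``if'' direction is fine, and the final logical step (a polynomial vanishing on an infinite set of specializations of a $k[t]$-family vanishes identically, hence at $(A_0,B_0)$) is sound. But the proof has a genuine gap exactly where you say the ``technical heart'' lies: you never construct the polynomial family $(A(t),B(t))$ through $(A_0,B_0)$ together with the chain matrices $C_i(t)\in\text{Mat}_{n\times n}(k[t])$. You only observe that the naive attempt fails and then appeal to an unspecified ``Galois-descent'' of a $\bar k$-chain, which is not an argument. Indeed it is unclear that such a family exists at all: any $k[t]$-chain whose endpoints specialize at $t=0$ to $(A_0,B_0)$ must degenerate there (some $C_i(0)$ scalar), since $d_k(A_0,B_0)>d$, and the existence of a curve through a point outside a Zariski-dense set that meets the set in a cofinite collection of $k$-points is a much stronger statement than density -- it is not implied by anything you cite, and nothing in the diameter-$4$ result over $\bar k$ produces $k$-rational data specializing to the given pair.

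The paper's proof avoids any curve through the bad point and needs no descent. It first notes that since $d\geq 4$ and the diameter exceeds $d$, the field $k$ is infinite and not algebraically closed, and that $\mathcal{C}_n^d(k)\subsetneq\text{Mat}_{n\times n}^2(k)$. It then shows that the set of pairs $(A,B)$ with both matrices diagonalizable over $k$ is Zariski dense in $\text{Mat}_{n\times n}^2(k)$: an infinite field is Zariski dense in its algebraic closure, so the image of $\textrm{GL}_n(k)\times k^n\to\text{Mat}_{n\times n}(k)$, $(g,\lambda)\mapsto g\,\textrm{diag}(\lambda)g^{-1}$, is dense. Each such $A$ and $B$ has an eigenvalue in $k$, hence commutes with a rank-one matrix over $k$, and for $n\geq 3$ any two rank-one matrices admit a common commuting rank-one matrix; since rank-one matrices are nonscalar, every such pair lies in $\mathcal{C}_n^4(k)\subset\mathcal{C}_n^d(k)$. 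Therefore $\overline{\mathcal{C}_n^d(k)}=\text{Mat}_{n\times n}^2(k)$ while $\mathcal{C}_n^d(k)$ is proper, so it is not a variety. If you want to salvage your write-up, replace the family construction by this global density argument: it achieves exactly what your family was meant to do (any polynomial vanishing on $\mathcal{C}_n^d(k)$ vanishes identically), with the hypotheses $n\geq 3$ and $d\geq 4$ entering only through the rank-one chain and the containment $\mathcal{C}_n^4\subset\mathcal{C}_n^d$.
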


The final remaining case of $d=3$ for infinite, non-algebraically closed fields is more subtle.  It turns out that such commuting sets are not always varieties.

\begin{proposition}\label{prop:c34r}
The distance-$3$ commuting set of pairs of $4\times 4$ real matrices $\mathcal{C}_4^3(\mathbb{R})$ is not a variety.
\end{proposition}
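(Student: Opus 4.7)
\textbf{The plan} is to show that $\mathcal{C}_4^3(\mathbb{R})$ is not closed in the Euclidean topology on $\text{Mat}^2_{4\times 4}(\mathbb{R}) \cong \mathbb{R}^{32}$. Since every real affine variety is the common zero set of polynomials and hence Euclidean-closed, producing a convergent sequence of pairs in $\mathcal{C}_4^3(\mathbb{R})$ whose limit lies outside $\mathcal{C}_4^3(\mathbb{R})$ will finish the proof. This strategy exploits the fact that, by Theorem~\ref{theorem:main}, the obstruction must be specific to non-algebraically closed fields, so the pathology should come from purely real phenomena such as irreducible quadratic characteristic polynomials.

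The first step is to exhibit an explicit pair $(A_0,B_0)\in\text{Mat}^2_{4\times 4}(\mathbb{R})$ with $d_{\mathbb{R}}(A_0,B_0)=4$; such pairs exist because the real commuting graph has diameter $4$ by Shitov's theorem. The most natural candidates are block-diagonal matrices built from $2\times 2$ rotation-type blocks whose characteristic polynomials are irreducible over $\mathbb{R}$: their real centralizers are strictly smaller than their complex centralizers, and this restriction is exactly the phenomenon that should obstruct a length-$3$ walk. The second step is to construct a continuous family $(A_t,B_t)_{t>0}$ in $\mathcal{C}_4^3(\mathbb{R})$ with $(A_t,B_t)\to(A_0,B_0)$ as $t\to 0^+$. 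The typical device is to perturb $A_0$ or $B_0$ so that a pair of complex-conjugate eigenvalues splits into distinct real eigenvalues, enlarging the real centralizer and furnishing new non-scalar idempotents that serve as relay points. For each $t$ I would display explicit non-scalar matrices $C_1(t), C_2(t)\in\text{Mat}_{4\times 4}(\mathbb{R})$ with
\[
A_t C_1(t) = C_1(t) A_t, \quad C_1(t) C_2(t) = C_2(t) C_1(t), \quad C_2(t) B_t = B_t C_2(t),
\]
depending polynomially on $t$ but degenerating (for instance, becoming scalar) in the limit so that the walk collapses.

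The main obstacle is verifying that $d_{\mathbb{R}}(A_0,B_0)$ really equals $4$ and not $\le 3$. This is the heart of the proposition and requires ruling out \emph{every} potential length-$3$ walk: for every choice of non-scalar real matrices $C_1$ commuting with $A_0$ and $C_2$ commuting with $B_0$, one must check that $C_1 C_2 \ne C_2 C_1$. Because of the chosen block structure, the real centralizers of $A_0$ and $B_0$ admit concrete descriptions as algebras of $2\times 2$ matrices over the copy of $\mathbb{C}=\mathbb{R}[i]$ realized inside $\text{Mat}_{2\times 2}(\mathbb{R})$, so the obstruction to commutation reduces to a finite, though delicate, parameter computation in these algebras. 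Once this case analysis is complete, the non-closure of $\mathcal{C}_4^3(\mathbb{R})$ is immediate, and the proposition follows.
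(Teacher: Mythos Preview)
Your strategy of exhibiting non-closure is sound and is essentially what the paper does, but the execution differs in two key respects. First, the paper does not take both matrices to be rotation-block matrices: it chooses $B$ block-diagonal with two $2\times 2$ rotation blocks, but $A$ is a \emph{rank-$2$} matrix with real eigenvalues $\{2,-2,0,0\}$. Your suggestion of using rotation blocks for both $A_0$ and $B_0$ is risky, since two such matrices with compatible block structure commute outright, and even with incompatible structure their $\mathbb{R}[i]$-algebra centralizers often meet in more than scalars, making $d_{\mathbb{R}}\le 2$ the likely outcome rather than $4$. Second, rather than perturbing $(A,B)$ along a one-parameter family, the paper fixes $A$ and proves a general lemma: whenever $\textrm{rank}(A)\le n-2$, every $B'$ possessing a real eigenvalue satisfies $d_{\mathbb{R}}(A,B')\le 3$, because such a $B'$ commutes with some rank-$1$ matrix $D$, and the low rank of $A$ guarantees a rank-$1$ matrix $C=uv^T$ commuting with both $A$ and $D$ (take $u$ orthogonal to both row spaces and $v$ orthogonal to both column spaces). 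Since matrices with a real eigenvalue are Zariski-dense, the slice $\{B':d_{\mathbb{R}}(A,B')\le 3\}$ is Zariski-dense yet omits $B$, contradicting closedness. The low-rank hypothesis on $A$ is what drives the whole argument; your eigenvalue-splitting perturbation could in principle be made to work with a more carefully chosen pair, but the paper's rank-$1$ relay mechanism is cleaner, avoids constructing explicit one-parameter families and verifying their commutation relations, and is packaged as a lemma valid for arbitrary $n\ge 3$ and arbitrary infinite fields.
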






The fact that $\mathcal{C}^{d}_n$ is a variety over many fields has a number of nice consequences.  For instance, every variety has a well-defined notion of dimension, meaning that this set is a reasonable geometric object.  Moreover, if we are working over $k=\mathbb{C}$, or $k=\mathbb{R}$ with $d\leq 2$, every variety is closed in the usual Euclidean topology, so we know that these sets are closed.

It turns out that many instances of Theorem \ref{theorem:main} are readily proven.  In the case of $d=0$, we have that $\mathcal{C}^{0}_n=\{(A,B)\,|\, A=B\}$, which is defined by the $n^2$ equations of the form $(X-Y)_{ij}$=0, and is thus an affine variety.  As already noted, $\mathcal{C}^{1}_n$ is the commuting variety.  In the case of $n=2$ and $d\geq 2$, any two matrices $A$ and $B$ with $d(A,B)\leq d$ must in fact satisfy $d(A,B)\leq 1$ (see Proposition \ref{prop:n=2}), so $\mathcal{C}^{d}_n=\mathcal{C}^{ 1}_n$.  Since $\mathcal{C}^1_n$ is a variety, the theorem holds in this case.  Finally, if $n\geq 3$ and $d\geq 4$ then we have
$$\mathcal{C}_n^{d}=\mathcal{C}_n^{ 4}=\text{Mat}_{n\times n}^2$$
since $d(A,B)\leq 4$ for all $A$ and $B$ \cite{AMRR}, \cite{shitov}.  This set is indeed a variety: it is the vanishing locus of the zero polynomial.

Thus, it remains to show that Theorem \ref{theorem:main} holds in the case where $n\geq 3$, and either $d=2$ or $d=3$; and to prove our results for more general fields.  After presenting useful background material on varieties and on commuting distance in Section \ref{section:background} (as well as proving Proposition \ref{prop:d_geq_4}), we handle the $d=2$ case in Section \ref{section:distance2}.  We also give an explicit decomposition of the distance-$2$ commuting variety into irreducibles, and compute its dimension. The $d=3$ case, for both algebraically closed fields and otherwise, is studied in Section \ref{section:distance3}.

\section{Affine varieties and commuting distance}
\label{section:background}

\subsection{Affine varieties}
We devote the first part of this section to present the tools and results from algebraic geometry that we will use.  Most of our notation and terminology comes from \cite{CLO}. Let $k$ be a field, and let $R=k[x_1,\ldots,x_n]$ be the polynomial ring in $n$ variables over $k$.  We refer to $k^n$ as \emph{$n$-dimensional affine space}. Let $f_1,\ldots,f_s\in R$.  The \emph{vanishing locus} of $f_1,\ldots,f_s$ is the set of points in $k^n$ at which all the polynomials vanish:
$$\textbf{V}_k(f_1,\ldots,f_s):=\{(a_1,\ldots,a_n)\in k^n\,|\, f_i(a_1,\ldots,a_n)=0\text{ for $1\leq i\leq s$}\}.$$
We refer to any set of this form as an \emph{affine variety}.  Given an ideal $I\subset R$, we define its vanishing locus as
$$\textbf{V}_k(I):=\{(a_1,\ldots,a_n)\in k^n\,|\, f(a_1,\ldots,a_n)=0\text{ for all $f\in I$}\}.$$
If $I=\left<f_1,\ldots,f_s\right>$, then we have $\textbf{V}_k(I)=\textbf{V}_k(f_1,\ldots,f_s)$ \cite[Proposition 2.5.9]{CLO}.  Since every ideal $I\subset R$ can be generated by finitely many polynomials \cite{hilbert}, we could equivalently define an affine variety to be any set of the form $\textbf{V}_k(I)$ where $I$ is an ideal in  $R$.

\begin{example}\label{example:2x2}  We consider $\text{Mat}_{2\times 2}^2$ as  $8$-dimensional affine space, each point of which corresponds to a pair of $2\times 2$ matrices $(A,B)=\left(\left(\begin{smallmatrix}a_{11}&a_{12}\\a_{21}&a_{22}\end{smallmatrix}\right),\left(\begin{smallmatrix}b_{11}&b_{12}\\b_{21}&b_{22}\end{smallmatrix}\right)\right)$.  Our ring of polynomials can then be written as $R=k[x_{11},x_{12},x_{21},x_{22},y_{11},y_{12},y_{21},y_{22}]$, where the variable $x_{ij}$ (respectively $y_{ij}$) corresponds to the coordinate $a_{ij}$ (respectively $b_{ij}$).  For shorthand, we could also write $R=k[X,Y]$, where $X=\left(\begin{smallmatrix}x_{11}&x_{12}\\x_{21}&x_{22}\end{smallmatrix}\right)$ and $Y=\left(\begin{smallmatrix}y_{11}&y_{12}\\y_{21}&y_{22}\end{smallmatrix}\right)$.  Consider the variety $V(f_1,f_2,f_3,f_4)$, where
\begin{align*}
f_1=&x_{11}y_{11}+x_{12}y_{21}-x_{11}y_{11}-x_{21}y_{12}
\\f_2=&x_{11}y_{12}+x_{12}y_{22}-x_{12}y_{11}-x_{22}y_{12}
\\f_3=&x_{21}y_{11}+x_{22}y_{21}-x_{11}y_{21}-x_{21}y_{22}
\\f_4=&x_{21}y_{12}+x_{22}y_{22}-x_{12}y_{21}-x_{22}y_{22}.
\end{align*}
These polynomials are simply the entries of $XY-YX$.  It follows that $f_1(A,B)=f_2(A,B)=f_3(A,B)=f_4(A,B)=0$ if and only if $AB-BA=0$, that is if and only if $A$ and $B$ commute.  Thus, $\textbf{V}_k(f_1,f_2,f_3,f_4)$ is the $2\times 2$ commuting variety.  In fact, since $f_1=-f_4$, we have $\textbf{V}_k(f_1,f_2,f_3,f_4)=\textbf{V}_k(f_1,f_2,f_3)$.

\end{example}

If a set $S\subset k^n$ is an affine variety, we say that it is \emph{Zariski-closed}.  This terminology comes from the \emph{Zariski topology} on $k^n$, which defines a subset of $k^n$ to be closed if it is equal to $\textbf{V}_k(f_1,\ldots,f_s)$ for some $f_1,\ldots,f_s\in k[x_1,\ldots,x_n]$.  This defines a topology because the union of finitely many varieties is a variety; because the intersection of any collection of varieties is a variety; and because $\emptyset=\textbf{V}_k(1)$ and $k^n=\textbf{V}_k(0)$ are both varieties. 
\begin{definition}
	For a general set $S\subset k^n$, the \emph{Zariski closure} of $S$, denoted $\overline{S}$, is the smallest Zariski-closed set (that is, the smallest variety) containing $S$.
\end{definition}
\begin{example} \label{eq:S}  Consider the set
$$S=\{(a,a)\,|\,a\neq 0\}\subset \mathbb{R}^2.$$
This set is not a variety:  if a polynomial $f(x,y)$ vanishes on the set $S$, then since polynomials are continuous functions we  also have $f(0,0)=0$.  It follows that any variety containing $S$ must also contain the point $(0,0)$. 

In fact, the Zariski closure of $S$ is simply $S$ together with this point:
$$\overline{S}=S\cup\{(0,0)\}=\textbf{V}_\mathbb{R}(x-y).$$
\end{example}

 In the previous example over $\mathbb{R}$, the Zariski closure of our set happened to equal its closure in the usual Euclidean topology.  When working over $\mathbb{R}$ or $\mathbb{C}$, since polynomials are continuous functions, all Zariski-closed sets are closed in the Euclidean topology, so it is true that the Zariski closure always contains the Euclidean closure.  In general, this containment is strict, although in certain special cases we have equality, such as for \emph{constructible} sets.

\begin{definition}
We say that a set in $k^n$ is \emph{constructible} if it is defined by a finite boolean combination of polynomial conditions.  In other words, it can be written as a finite combination of intersections and unions of affine varieties and their complements.\end{definition}  

For example, the set $S$ from Example \ref{eq:S} is constructible, since $S=\textbf{V}_\mathbb{R}(x-y)\setminus\textbf{V}_{\mathbb{R}}(x,y)$.

\begin{proposition}[Corollary 1 in Section 1.10 of \cite{mum}]\label{proposition:closure}  Suppose $S\subset\mathbb{C}^n$ is a constructible set.  Then the Zariski closure of $S$ is equal to the Euclidean closure of $S$.
\end{proposition}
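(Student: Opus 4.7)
The plan is to prove the two containments separately. One direction is immediate: each polynomial in $\C[x_1,\ldots,x_n]$ is Euclidean-continuous, so every Zariski-closed set is Euclidean-closed; hence $\overline{S}^{\mathrm{Zar}}$ is a Euclidean-closed set containing $S$, and therefore contains the Euclidean closure $\overline{S}^{\mathrm{Eucl}}$. The substance of the proof lies in establishing the reverse containment $\overline{S}^{\mathrm{Zar}} \subseteq \overline{S}^{\mathrm{Eucl}}$.

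To exploit constructibility of $S$, I would write $S$ as a finite union $S = \bigcup_j (U_j \cap Z_j)$ of locally closed sets (each $U_j$ Zariski-open, each $Z_j$ Zariski-closed), and then decompose each $Z_j$ into its finitely many irreducible components $W_{j,k}$. Each nonempty intersection $U_j \cap W_{j,k}$ is a Zariski-open, hence Zariski-dense, subset of the irreducible variety $W_{j,k}$, and therefore
$$\overline{S}^{\mathrm{Zar}} \;=\; \bigcup_{(j,k)\colon U_j \cap W_{j,k}\neq\emptyset} W_{j,k}.$$
Every $W_{j,k}$ appearing here contains a nonempty Zariski-open subset $U_j \cap W_{j,k}$ that lies entirely in $S$. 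It therefore suffices to prove the following key lemma: if $W \subseteq \C^n$ is an irreducible affine variety and $O \subseteq W$ is a nonempty Zariski-open subset, then $O$ is Euclidean-dense in $W$. Granting the lemma, each $W_{j,k}$ lies in $\overline{S}^{\mathrm{Eucl}}$, and hence so does $\overline{S}^{\mathrm{Zar}}$.

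For the key lemma, set $Y = W \setminus O$, a proper Zariski-closed subset of the irreducible variety $W$; then $\dim Y < \dim W$. The smooth locus $W_{\mathrm{sm}}$ is a nonempty Zariski-open subset of $W$ carrying the structure of a complex manifold of pure complex dimension $\dim W$. Its intersection with $Y$ is a proper analytic subvariety of $W_{\mathrm{sm}}$ of strictly smaller complex dimension, so by the local parameterization of analytic sets — or, equivalently, by the identity theorem applied to local holomorphic defining functions for $Y$ — it is nowhere dense in the Euclidean topology on $W_{\mathrm{sm}}$. Since the same reasoning applied to the singular locus shows that $W_{\mathrm{sm}}$ is Euclidean-dense in $W$, we conclude that $O$ is Euclidean-dense in $W$.

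The main obstacle is the key lemma: converting the algebraic inequality $\dim Y < \dim W$ into the topological statement that $Y$ is Euclidean-nowhere-dense in $W$ genuinely requires complex-analytic input, rather than formal manipulations. The remaining steps — polynomial continuity for the easy containment, and the decomposition of a constructible set into irreducible locally closed pieces — are essentially formal.
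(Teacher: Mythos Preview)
The paper does not supply its own proof of this proposition; it is quoted as a background result from Mumford's \emph{Red Book} (Corollary~1 in Section~I.10), so there is no in-paper argument to compare against. Your sketch is the standard route to this result and is essentially correct: reduce via the constructible decomposition to the statement that a nonempty Zariski-open subset $O$ of an irreducible variety $W$ is Euclidean-dense, and then invoke the complex-analytic structure on the smooth locus. One small point worth tightening: your appeal to ``the same reasoning applied to the singular locus'' to conclude that $W_{\mathrm{sm}}$ is Euclidean-dense in $W$ is not literally the same reasoning, since at that stage you have only argued that a lower-dimensional analytic subset of a \emph{manifold} is nowhere dense, whereas $W$ itself may be singular; this is usually handled by induction on $\dim W$, or (as in Mumford's proof) by Noether normalization to a finite surjection $W \to \mathbb{C}^d$, where the analytic input reduces to the elementary fact that the complement of a hypersurface in $\mathbb{C}^d$ is Euclidean-dense.
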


A variety $V$ is called \emph{irreducible} if it is irreducible in the Zariski topology, i.e. if it cannot be written as a union $V=V_1\cup V_2$ of two 
blue proper subvarieties $V_1\subsetneq V$ and $V_2\subsetneq V$.  Any variety $V$ can be written as a finite union of irreducible varieties $V_1,\ldots,V_s$, unique up to reordering with no $V_i$ contained in any $V_j$ for $i\neq j$ \cite[Theorem 4.6.4]{CLO}.

The \emph{ideal-variety correspondence} \cite[Chapter 4]{CLO} allows us to connect geometric operations on affine varieties to algebraic operations on ideals.  Often these correspondences only hold up to taking Zariski closures.

\begin{proposition}[Theorem 3.2.3 in \cite{CLO}] \label{clo:projection} Let $k$ be an algebraically closed field, and let $I\subset k[x_1,\ldots,x_n]$ be an ideal.  Let $1\leq m\leq n$, and let $\pi:k^n\rightarrow k^m$ denote projection onto the first $m$ coordinates.  Then
$$\overline{\pi(\textbf{V}_k(I))}={\textbf{V}_k(I\cap k[x_1,\ldots,x_m])}.$$
\end{proposition}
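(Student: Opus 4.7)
The plan is to establish the equality of the two varieties by showing equality of their radical ideals, which is the cleanest way to package the content. Write $J := I \cap k[x_1,\ldots,x_m]$ for the elimination ideal, and let $\mathbf{I}(\cdot)$ denote the ideal of a set. Since $\overline{\pi(\textbf{V}_k(I))} = \textbf{V}_k(\mathbf{I}(\pi(\textbf{V}_k(I))))$ and $\textbf{V}_k(\sqrt{J}) = \textbf{V}_k(J)$, it suffices to prove $\mathbf{I}(\pi(\textbf{V}_k(I))) = \sqrt{J}$.

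For the containment $\sqrt{J} \subseteq \mathbf{I}(\pi(\textbf{V}_k(I)))$, I would observe that any $f \in J$ involves only the first $m$ variables, so for any $p = (a_1,\ldots,a_n) \in \textbf{V}_k(I)$ we have $f(\pi(p)) = f(a_1,\ldots,a_m) = f(p) = 0$. Hence $J \subseteq \mathbf{I}(\pi(\textbf{V}_k(I)))$, and since the right-hand side is automatically radical, $\sqrt{J}$ sits inside it too.

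The substantive step is the reverse containment $\mathbf{I}(\pi(\textbf{V}_k(I))) \subseteq \sqrt{J}$, and this is where algebraic closure enters essentially. Given $f \in k[x_1,\ldots,x_m]$ that vanishes on $\pi(\textbf{V}_k(I))$, the same observation as above shows that, viewed as an element of $k[x_1,\ldots,x_n]$, $f$ vanishes on all of $\textbf{V}_k(I)$. Applying Hilbert's Nullstellensatz over the algebraically closed field $k$ yields $f^N \in I$ for some positive integer $N$; since $f^N$ still involves only $x_1,\ldots,x_m$, it lies in $I \cap k[x_1,\ldots,x_m] = J$, so $f \in \sqrt{J}$, as required.

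The main (indeed only) obstacle is invoking the Nullstellensatz, without which the statement genuinely fails: for example, $\textbf{V}_{\mathbb{R}}(x^2+1) = \emptyset$ projects to $\emptyset \subset \mathbb{R}^0$, whose Zariski closure is $\emptyset$, while the corresponding elimination ideal can behave badly. A more constructive route, which I would mention but not carry out, uses a lexicographic Gr\"obner basis for $I$ eliminating $x_{m+1},\ldots,x_n$: such a basis intersected with $k[x_1,\ldots,x_m]$ generates $J$ by the Elimination Theorem, and the Extension Theorem over $k = \overline{k}$ then governs exactly when a partial solution in $k^m$ lifts to $k^n$, from which the closure statement can be recovered. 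But for the bare equality of Zariski closures, the Nullstellensatz argument above is both the shortest and most transparent.
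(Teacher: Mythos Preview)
Your argument is correct: the two containments via the Nullstellensatz are exactly the standard way to establish this closure theorem, and your write-up is clean. Note, however, that the paper does not give its own proof of this proposition at all---it is simply quoted from \cite{CLO} (Theorem~3.2.3 there) as background, so there is nothing to compare against. One small quibble: your illustrative counterexample over $\mathbb{R}$ is stated imprecisely (projecting to $\mathbb{R}^0$ is outside the hypothesis $1\le m\le n$, and the phrase ``can behave badly'' is vague); a cleaner instance is $I=(x_1^2+x_2^2+1)\subset\mathbb{R}[x_1,x_2]$ with $m=1$, where $\textbf{V}_\mathbb{R}(I)=\emptyset$ but $I\cap\mathbb{R}[x_1]=(0)$, so $\textbf{V}_\mathbb{R}(J)=\mathbb{R}\neq\emptyset$.
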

In other words, up to Zariski closure, projecting a variety corresponds to eliminating variables.

\begin{proposition}[Theorem 4.4.10 in \cite{CLO}] \label{clo:difference} Let $k$ be an algebraically closed field, and let $I,J\subset k[x_1,\ldots,x_n]$ be ideals.  Let
$$I:J^\infty=\{f\in k[x_1,\ldots,x_n]\,|\,\text{for all $g\in J$, there is $N\geq 0$ such that $fg^N\in I$}\}.$$
Then $I:J^\infty$ is an ideal, and
$$\textbf{V}_k(I:J^\infty)=\overline{\textbf{V}_k(I)\setminus\textbf{V}_k(J)}.$$
\end{proposition}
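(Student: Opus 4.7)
The plan is to verify that $I:J^\infty$ is an ideal and then establish the variety equality by two opposite inclusions, with the algebraically closed hypothesis entering only in the second direction through Hilbert's Nullstellensatz. Closure under addition follows by picking, for each $g\in J$ and $f_1,f_2\in I:J^\infty$, exponents $N_1,N_2$ with $f_i g^{N_i}\in I$ and noting $(f_1+f_2)g^{\max(N_1,N_2)}\in I$; closure under multiplication by elements of $k[x_1,\ldots,x_n]$ is immediate. It will also be convenient to record the standard equivalent description, valid because $k[x_1,\ldots,x_n]$ is Noetherian: $f\in I:J^\infty$ iff there exists $N$ with $fJ^N\subseteq I$. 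Indeed, choosing generators $g_1,\ldots,g_r$ of $J$ and $N$ large enough that $fg_i^N\in I$ for each $i$, any $rN$-fold product of generators contains some $g_i$ at least $N$ times by pigeonhole and hence lies in $I$ after multiplication by $f$.

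For the inclusion $\overline{\textbf{V}_k(I)\setminus\textbf{V}_k(J)}\subseteq\textbf{V}_k(I:J^\infty)$, since the right-hand side is Zariski-closed it suffices to show $\textbf{V}_k(I)\setminus\textbf{V}_k(J)\subseteq\textbf{V}_k(I:J^\infty)$. Given $p\in\textbf{V}_k(I)\setminus\textbf{V}_k(J)$, fix some $g\in J$ with $g(p)\neq 0$. For any $f\in I:J^\infty$, select $N$ with $fg^N\in I$; evaluating at $p$ yields $f(p)g(p)^N=0$, forcing $f(p)=0$.

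The reverse inclusion is where the main work lies. I would argue contrapositively that every polynomial $f$ vanishing on $\textbf{V}_k(I)\setminus\textbf{V}_k(J)$ lies in $\sqrt{I:J^\infty}$, from which $f$ vanishes on $\textbf{V}_k(I:J^\infty)$ and the desired inclusion of varieties follows. The key observation is that for every $g\in J$, the product $fg$ vanishes on all of $\textbf{V}_k(I)$: on points of $\textbf{V}_k(I)\cap\textbf{V}_k(J)$ the factor $g$ vanishes, while on points of $\textbf{V}_k(I)\setminus\textbf{V}_k(J)$ the factor $f$ vanishes by assumption. The Nullstellensatz then yields $fg\in\sqrt{I}$, so $f^{M(g)}g^{M(g)}=(fg)^{M(g)}\in I$ for some exponent $M(g)$. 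Applying this to a finite generating set $g_1,\ldots,g_r$ of $J$ and letting $M$ be the maximum of the resulting exponents gives $f^M g_i^M\in I$ for every $i$, which by the Noetherian reformulation above upgrades to $f^M\in I:J^\infty$; hence $f\in\sqrt{I:J^\infty}$ as desired.

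The step I expect to require the most care is the invocation of the Nullstellensatz in the reverse inclusion, since that is precisely where algebraic closure of $k$ is essential: without it, $fg$ vanishing on $\textbf{V}_k(I)$ need not force $fg\in\sqrt{I}$. A secondary subtlety is the passage from the conclusion $fg\in\sqrt{I}$ for each generator $g$ of $J$ to a single uniform exponent $M$ that works for all of them, which is the reason for recording the Noetherian equivalent definition of $I:J^\infty$ at the outset.
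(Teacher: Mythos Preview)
Your argument is correct and is essentially the standard textbook proof of this fact. However, note that the paper does not supply its own proof of this proposition: it is quoted verbatim as Theorem 4.4.10 from Cox--Little--O'Shea and used as a black box, so there is no in-paper proof to compare against.
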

\begin{definition}
The ideal $I:J^\infty$ is called the \emph{saturation} of $I$ with respect to $J$.	
\end{definition}
So, up to Zariski closure, taking a set-theoretic difference of varieties corresponds to saturating one ideal 
with respect to another.

We now build up some tools that will help us study the distance-$d$ commuting sets. We briefly  discuss \emph{projective algebraic geometry} \cite[Chapter 8]{CLO}.  Although our main results are stated in the language of affine varieties, projective geometry will be used in the proofs of Theorem \ref{theorem:decomposition} and Proposition \ref{prop:pcaffine}.  Again we choose a field $k$ and an integer $n$.  The role of affine space is now played by projective space $\mathbb{P}_k^n$, which as a set is all equivalence classes of $(n+1)$-tupes $(a_0:a_1:\cdots:a_n)$ where $a_i\in k$, not all equal to $0$.  The equivalence is defined by $(a_0:a_1:\cdots:a_n)\sim (b_0:b_1:\cdots:b_n)$ if and only if there exists a nonzero constant $\lambda$ with $a_i=\lambda b_i$ for all $i$.  There are a number of other constructions of projective space, such as the one given in example below.

\begin{example} Let $V$ be a non-zero vector space.  Then the \emph{projectivization of $V$}, denoted $\mathbb{P}(V)$, is the set of one-dimensional vector subspaces of $V$.
\end{example}

In order to talk about polynomials \emph{vanishing} at a point of projective space, we must consider polynomials that are \emph{homogeneous}; that is, polynomials where all terms have equal degree.  This is because if $f\in k[x_0,\ldots,x_n]$ is homogeneous with each term having degree $d$, then
$$f(\lambda a_0,\lambda a_1,\ldots,\lambda a_n)=\lambda^df(a_0,a_1,\ldots,a_n)$$
for any $a_i\in k$ and any $\lambda$.  Thus, $f$ vanishing at the tuple $(a_0:\cdots:a_n)$ implies it vanishes for all other tuples in the same equivalence class.

We can consider a product of a projective space and an affine space:
$$k^m\times \mathbb{P}^n_k.$$
A subset of such a space is Zariski-closed if and only if it is the vanishing locus of a collection of polynomials, which are homogeneous in the variables corresponding to the coordinates of $\mathbb{P}^n_k$.  A key result that we will use is that when projecting away from projective spaces, Zariski-closed sets are mapped to Zariski-closed sets, at least over an algebraically closed field.

\begin{proposition}[Proposition 8.5.5 and Theorem 8.5.6 in \cite{CLO}]\label{proposition:proper}
Let $k$ be an algebraically closed field, and let $\pi:k^m\times \mathbb{P}^n_k\rightarrow k^m$ be the projection map onto the affine space.  Then $\pi$ maps Zariski-closed sets to Zariski-closed sets.
\end{proposition}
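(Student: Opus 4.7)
The plan is to reduce this to the main theorem of elimination theory via the projective Nullstellensatz. Let $Z\subseteq k^m\times\PP^n_k$ be Zariski-closed, so $Z=\textbf{V}_k(F_1,\ldots,F_s)$ for polynomials $F_i\in k[x_1,\ldots,x_m][y_0,\ldots,y_n]$ that are homogeneous in the $y$-variables of degree $d_i$. Given $a\in k^m$, the fiber $\pi^{-1}(a)\cap Z$ consists of the common projective zeros of $F_1(a,y),\ldots,F_s(a,y)$, so $a\in\pi(Z)$ if and only if these specialized polynomials have a nontrivial common zero in $k^{n+1}$.

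Because $k$ is algebraically closed, the homogeneous Nullstellensatz tells us that $F_1(a,y),\ldots,F_s(a,y)$ have only the trivial common zero exactly when the ideal they generate contains some power $(y_0,\ldots,y_n)^N$ of the irrelevant maximal ideal. Consequently $a\notin\pi(Z)$ iff there exists an integer $N$ (which we may take at least $\max_i d_i$) such that the graded multiplication map
$$\mu_{N,a}:\bigoplus_{i=1}^{s} k[y_0,\ldots,y_n]_{N-d_i}\longrightarrow k[y_0,\ldots,y_n]_N,\qquad (g_1,\ldots,g_s)\longmapsto\sum_{i=1}^{s} g_i\,F_i(a,y),$$
is surjective.

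Fix monomial bases of the source and target. The matrix $M_N(a)$ representing $\mu_{N,a}$ has entries that are polynomials in $a_1,\ldots,a_m$, since they are exactly the coefficients of the $F_i(a,y)$ arranged into a Sylvester-style matrix. The map $\mu_{N,a}$ is surjective precisely when some maximal minor of $M_N(a)$ is nonzero, so the set $U_N=\{a\in k^m\mid \mu_{N,a}\text{ is surjective}\}$ is Zariski-open. Hence
$$k^m\setminus\pi(Z)=\bigcup_{N\geq\max_i d_i}U_N$$
is a union of open sets and therefore open, so $\pi(Z)$ is Zariski-closed, as desired.

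The main obstacle is the key equivalence in the second paragraph: translating ``the system $F_i(a,y)=0$ has only the trivial zero'' into ``$(y_0,\ldots,y_n)^N$ lies in the ideal for some $N$.'' One direction is immediate from the Nullstellensatz applied to the affine cone; the other is the standard graded trick showing that if the radical of a homogeneous ideal contains all the $y_i$, then some uniform power does. Once this equivalence is established, the rest is the linear-algebra rephrasing of ``ideal contains a power of the irrelevant ideal'' as a rank condition, which is a routine open-condition argument.
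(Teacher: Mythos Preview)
The paper does not supply its own proof of this proposition: it is quoted directly from \cite{CLO} (Proposition 8.5.5 and Theorem 8.5.6) and used as a black box. So there is no in-paper argument to compare against.

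Your argument is correct and is in fact the standard proof of the main theorem of elimination theory; it is essentially the one given in \cite{CLO}. The reduction to the projective Nullstellensatz, the reformulation of ``$(y_0,\ldots,y_n)^N\subseteq (F_1(a,y),\ldots,F_s(a,y))$'' as surjectivity of the graded multiplication map $\mu_{N,a}$, and the observation that surjectivity is a Zariski-open condition on $a$ via maximal minors of the Sylvester-type matrix $M_N(a)$ are all sound. One small remark: you should note (or it is implicit) that the sequence of closed sets $k^m\setminus U_N$ is decreasing in $N$, so that the union $\bigcup_N U_N$ is genuinely an ascending union of opens; this is automatic since surjectivity for some $N$ implies surjectivity for all larger $N$. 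With that, your proof is complete.
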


In the language of morphisms, such a projection $\pi$ is a \emph{proper} morphism.  We can generalise this to have more copies of affine and projective spaces, and the result still holds as long as we are projecting away from projective spaces.

Note that this result does not hold when projecting away from affine spaces rather than projective spaces:  the projection map $\pi:\mathbb{C}\times \mathbb{C}\rightarrow \mathbb{C}$ defined by $\pi(a,b)=a$ sends the hyperbola $\textbf{V}_\mathbb{C}(xy-1)$ to the set $\{a\in\mathbb{C}\,|\, a\neq 0\}$, which is not a variety.  Moreover, an analog of Proposition \ref{proposition:proper} does not hold for the field of real numbers, as the following example shows.
\begin{example}
Let $S=\Bigl\{([x_0:x_1],y)\vert x_0^2=yx_1^2\Bigr\}\subset {\mathbb P}^1({\mathbb R}) \times {\mathbb R}$. Then $S$ is Zariski-closed, but the projection to the second component is the set of non-negative reals, which is not Zariski-closed.
\end{example}


Several of our results in Section \ref{section:distance3} will study varieties over the real numbers.  To help with that  endeavor we close this subsection with the following result.  

\begin{lemma}\label{lemma:realclosed}
Let $k$ be a field, and let $K$ be an algebraic extension of $k$.  If $V\subset K^n$ is an affine variety in $K^n$, then $V\cap k^n$ is a variety in $k^n$
\end{lemma}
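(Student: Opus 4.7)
The plan is to reduce from the potentially enormous extension $K/k$ to a finite subextension generated by the coefficients of a defining set of polynomials, and then use a $k$-basis to split the vanishing conditions into conditions over $k$.

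Concretely, I would first write $V = \mathbf{V}_K(f_1, \ldots, f_s)$ for some $f_1,\ldots,f_s \in K[x_1,\ldots,x_n]$; this is possible because $V$ is an affine variety. Collecting all (finitely many) coefficients $c_1, \ldots, c_r \in K$ that appear in $f_1, \ldots, f_s$, I would consider the subfield $L = k(c_1, \ldots, c_r) \subseteq K$. Since $K/k$ is algebraic, each $c_t$ is algebraic over $k$, so $L/k$ is a finite extension. In particular, $L$ has a finite $k$-basis $\{e_1, \ldots, e_m\}$, which I would choose so that $e_1 = 1$.

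Next, I would use this basis to decompose each defining polynomial. Write each coefficient $c$ of each $f_i$ uniquely as $c = \sum_{j=1}^m \lambda_j e_j$ with $\lambda_j \in k$; collecting terms, this yields a decomposition
$$f_i = \sum_{j=1}^m e_j\, g_{ij},$$
where $g_{ij} \in k[x_1,\ldots,x_n]$. Now for any point $a \in k^n$, the value $g_{ij}(a)$ lies in $k$, so
$$f_i(a) = \sum_{j=1}^m e_j\, g_{ij}(a)$$
is a $k$-linear combination of the basis elements $e_j$. Since $\{e_1,\ldots,e_m\}$ is $k$-linearly independent, $f_i(a) = 0$ if and only if $g_{ij}(a) = 0$ for every $j$. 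Therefore
$$V \cap k^n = \mathbf{V}_k\bigl(\{g_{ij} : 1\leq i \leq s,\ 1 \leq j \leq m\}\bigr),$$
which exhibits $V \cap k^n$ as a variety in $k^n$.

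There is not really a hard step here; the only conceptual point is recognizing that algebraicity of $K/k$ lets us pass to a finite subextension $L$, so that the $k$-basis decomposition is available. The rest is just the standard observation that vanishing of an $L$-valued polynomial at a $k$-rational point is controlled by the vanishing of its $k$-coordinates in a chosen basis.
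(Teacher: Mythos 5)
Your proposal is correct and follows essentially the same route as the paper's own proof: pass to the finite subextension generated by the coefficients of the defining polynomials, decompose each $f_i$ over a $k$-basis of that subextension as $f_i=\sum_j e_j g_{ij}$ with $g_{ij}\in k[x_1,\ldots,x_n]$, and use $k$-linear independence of the basis to conclude that $V\cap k^n$ is cut out by the $g_{ij}$. The only cosmetic difference is that the paper routes the argument through the intermediate variety $W=\textbf{V}_{k'}(f_1,\ldots,f_r)$ before intersecting with $k^n$, which changes nothing of substance.
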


\begin{proof}
Let $V=\textbf{V}_{K}(f_1,\ldots,f_r)$ be an affine variety in $K^n$, where $f_i\in K[x_1,\ldots,x_n]$ for all $i$.  Since $K$ is an algebraic extension of $k$, there exists a finite extension $k'$ of $k$ containing all coefficients of all $f_i$.  Note that $W:=\textbf{V}_{k'}(f_1,\ldots,f_r)=\textbf{V}_{K}(f_1,\ldots,f_r)\cap (k')^n$, since both sets are the set of all points in $(k')^n$ where $f_1,\ldots,f_r$ simultaneously vanish.  

Let $\alpha_1,\ldots,\alpha_s$ be a vector space basis for $k'$ over $k$.  We can write each $f_i$ as
\[f_i=\sum_{j=1}^s \alpha_jg_{ij},\]
where $g_{ij}\in k[x_1,\ldots,x_n]$ for all $i,j$.  We claim that $W\cap k^n=\textbf{V}_k(g_{11},\ldots,g_{rs})$.  Certainly   $W\cap k^n\supset \textbf{V}_k(g_{11},\ldots,g_{rs})$:  if all $g_{ij}$ vanish at a point, then all $f_i$ do the same. 
For the other direction, suppose $p\in W\cap k^n$, so that $f_i(p)=0$ for all $i$.  This means that 
\[\sum_{j=1}^s \alpha_jg_{ij}(p)=0\]
for all $i$.  Since $\alpha_1,\ldots,\alpha_s$ is a vector space basis for $k'$ over $k$, this implies that $g_{ij}(p)=0$ for all $i$ and $j$.  Thus $p\in \textbf{V}_k(g_{11},\ldots,g_{rs})$, giving us the other direction of containment. This proves that $W\cap k^n$ is a variety; since $W\cap k^n=(V\cap (k')^n)\cap k^n=V\cap k^n$, this completes the proof.
\end{proof}

\subsection{Commuting distance} 
Now we present and develop material about commuting distance and commuting graphs.  
Let $A,B\in \text{Mat}_{n\times n}$.  We say that $A$ and $B$ commute if $AB=BA$.  For notation, we will sometimes write $A\leftrightarrow B$ to indicate that $A$ and $B$ commute.

Given two non-scalar matrices $A$ and $B$, define the \emph{commuting distance} $d(A,B)$ to be the distance between two matrices $A$ and $B$ on the commuting graph $\Gamma(\text{Mat}_{n\times n})$, so that $d(A,B)=0$ if and only if $A=B$, and $d(A,B)=1$ if and only if $A\neq B$ and $A$ and $B$ commute.  If $A$ and $B$ do not commute, then $d(A,B)=d$, where $d$ is the smallest natural number, if it exists, such that there exist $d-1$ non-scalar matrices $C_1,\ldots, C_{d-1}$ with $d(A,C_1)=d(C_{d-1},B)=d(C_i,C_{i+1})=1$ for $1\leq i\leq d-2$.  Such a commuting chain could be written as
$$A\leftrightarrow C_1 \leftrightarrow C_2 \leftrightarrow \cdots \leftrightarrow C_{d-1} \leftrightarrow B.$$  If there does not exist such a chain, we define $d(A,B)=\infty$.  We note that it is vital to require that all 
$C_i$ are non-scalar:  otherwise, we would have $d(A,B)\leq 2$ for all $A$ and $B$.
We extend the definition of the distance function  to allow scalar matrices as input by defining $d(A,A)=0$ and  $d(A,B)=d(B,A)=1$ whenever $A$ is a scalar matrix and $A\neq B$.

\begin{example}  Consider the following three matrices  over $\mathbb{C}$:
$$A=\begin{pmatrix}
1 & 2 & 0\\
3 & 4 & 0\\
0 & 0 & 5
\end{pmatrix},\,\,\,
B=\begin{pmatrix}
1 & 1 & 0\\
2 & 2 & 0\\
0 & 0 & 3
\end{pmatrix},\,\,\,
C=\begin{pmatrix}
1 & 0 & 0\\
0 & 1 & 0\\
0 & 0 & 0
\end{pmatrix}.
$$
Direct computation shows that $A\leftrightarrow C$ and $B\leftrightarrow C$, so $d(A,C)=1$ and $d(B,C)=1$. Similarly we can verify that $AB\neq BA$.  Since $A$ and $B$ do not commute but they do commute with the common non-scalar matrix $C$, we have $d(A,B)=2$.
\end{example}

\begin{proposition} \label{prop:n=2}  For any field $k$ let $A,B\in \text{Mat}_{2\times 2}$. Then either $d(A,B)\leq 1$ or $d(A,B)=\infty$.
\end{proposition}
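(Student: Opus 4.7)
The plan is to leverage the very restrictive structure of centralizers in $\text{Mat}_{2\times 2}(k)$. The key observation is that, among non-scalar $2\times 2$ matrices, the commuting relation turns out to be transitive, so every connected component of $\Gamma(\text{Mat}_{2\times 2}(k))$ is in fact a clique and therefore has diameter at most $1$.

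First I would dispense with the easy case: if $A$ or $B$ is scalar, the definition of $d$ immediately gives $d(A,B)\leq 1$. So I may assume both $A$ and $B$ are non-scalar.

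Next I would establish the key lemma: for any non-scalar $C\in \text{Mat}_{2\times 2}(k)$, the centralizer
$$\mathcal{Z}(C):=\{X\in \text{Mat}_{2\times 2}(k)\,|\,XC=CX\}$$
equals $\{aI+bC : a,b\in k\}$. Since $C$ is non-scalar, its minimal polynomial has degree $2$, so $I$ and $C$ are linearly independent, and the inclusion $\{aI+bC\}\subseteq \mathcal{Z}(C)$ is immediate. For the reverse inclusion, writing the equation $XC-CX=0$ out entrywise (possibly after conjugating $C$ to a convenient form, or by direct computation on a general $C$) gives a linear system on the four entries of $X$ whose solution space is $2$-dimensional. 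Thus $\mathcal{Z}(C)$ coincides with the $2$-dimensional subspace $\{aI+bC\}$.

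From this I would deduce transitivity of commuting on non-scalar matrices: if $A$ and $C$ are non-scalar with $A\leftrightarrow C$, then $C=aI+bA$ for some $a,b\in k$, and $b\neq 0$ because $C$ is non-scalar. Hence $A=(C-aI)/b\in \mathcal{Z}(C)$, and in fact $\mathcal{Z}(A)=\mathcal{Z}(C)$ since each is spanned by $I$ and a non-scalar element of the other. Therefore every non-scalar matrix that commutes with $C$ also commutes with $A$. Applied along a shortest commuting chain $A\leftrightarrow C_1\leftrightarrow \cdots\leftrightarrow C_{d-1}\leftrightarrow B$ of non-scalar matrices realizing $d(A,B)=d<\infty$, this yields $A\leftrightarrow C_i$ for all $i$, and in particular $A\leftrightarrow B$, forcing $d(A,B)\leq 1$.

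The only real ingredient is the centralizer computation, which in the $2\times 2$ case is a routine linear algebra exercise, so no substantive obstacle is expected.
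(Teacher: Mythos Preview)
Your proposal is correct and follows essentially the same approach as the paper: both hinge on the fact that the centralizer of a non-scalar $2\times 2$ matrix $C$ is exactly $k[C]=\{aI+bC\}$, from which it follows that $A\leftrightarrow C\leftrightarrow B$ with $C$ non-scalar forces $A\leftrightarrow B$. The only cosmetic difference is that the paper invokes the general theorem that a matrix whose minimal and characteristic polynomials coincide has centralizer $k[C]$, whereas you propose the direct $2\times 2$ linear-algebra computation.
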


\begin{proof}
 It will suffice to show that there do not exist any matrices $A$ and $B$ with $d(A,B)=2$:  if two matrices are separated by a commuting chain of length at least $2$, a subchain will yield two matrices of distance exactly $2$.  

Suppose for the sake of contradiction that there exist $A$ and $B$ with $d(A,B)=2$.  Let $C$ be a non-scalar matrix such that $A\leftrightarrow C\leftrightarrow B$.  Since $C$ is not a scalar matrix, it is not the root of a polynomial of degree $1$, meaning that the minimal polynomial of $C$ has degree at least $2$. The minimal polynomial thus coincides with 
the characteristic polynomial of $C$ up to a scalar factor.  Since $C$ is a matrix with equal minimal and characteristic polynomials, the matrices commuting with $C$ are precisely those of the form $p(C)$, where $p\in k[x]$ is a polynomial \cite[Theorem 3.2.4.2]{HJ}.  It follows that $A=p(C)$ and $B=q(C)$ for some polynomials $p$ and $q$.  But $p(C)$ and $q(C)$ commute, so $d(A,B)\leq 1$, a contradiction.
\end{proof}

Since over any field there are pairs of $2\times 2$ matrices that do not commute (for instance, $\left(\begin{smallmatrix}0&1\\0&0\end{smallmatrix}\right)$ and $\left(\begin{smallmatrix}0&0\\1&0\end{smallmatrix}\right)$),  we immediately get the following result originally noted in \cite[Remark 8]{AR}:
\begin{corollary}
The graph $\Gamma(\text{Mat}_{2\times 2})$ is disconnected, regardless of the field $k$.
\end{corollary}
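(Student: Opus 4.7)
The plan is to read this off directly from Proposition \ref{prop:n=2} by exhibiting a single witnessing pair. First I would take the two matrices already suggested in the parenthetical, namely $E_{12}=\left(\begin{smallmatrix}0&1\\0&0\end{smallmatrix}\right)$ and $E_{21}=\left(\begin{smallmatrix}0&0\\1&0\end{smallmatrix}\right)$, both of which lie in $\text{Mat}_{2\times 2}(k)$ regardless of the field $k$ (their entries are just $0$ and $1$). A one-line computation gives $E_{12}E_{21}=\left(\begin{smallmatrix}1&0\\0&0\end{smallmatrix}\right)$ and $E_{21}E_{12}=\left(\begin{smallmatrix}0&0\\0&1\end{smallmatrix}\right)$, so $E_{12}$ and $E_{21}$ do not commute; in particular $d(E_{12},E_{21})\geq 2$. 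Both matrices are manifestly non-scalar.

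Now I would invoke Proposition \ref{prop:n=2}, which asserts the dichotomy $d(A,B)\leq 1$ or $d(A,B)=\infty$ for any $A,B\in\text{Mat}_{2\times 2}(k)$. Since $d(E_{12},E_{21})\geq 2$, the first alternative is excluded, leaving $d(E_{12},E_{21})=\infty$. By definition of the distance function, this means that $E_{12}$ and $E_{21}$ lie in different connected components of $\Gamma(\text{Mat}_{2\times 2}(k))$, so the graph is disconnected.

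There is no real obstacle here: the work was done in Proposition \ref{prop:n=2}, and the corollary is essentially the observation that the ``$\infty$'' case of that dichotomy is actually realized over every field. The only thing to be careful about is that the witness pair must work uniformly in $k$, which is why the choice of $0/1$-matrices is convenient — no field-specific constants appear.
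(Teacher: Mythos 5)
Your proposal is correct and follows exactly the paper's argument: the paper likewise notes that the pair $\left(\begin{smallmatrix}0&1\\0&0\end{smallmatrix}\right)$, $\left(\begin{smallmatrix}0&0\\1&0\end{smallmatrix}\right)$ fails to commute over any field and then applies Proposition \ref{prop:n=2} to conclude the distance is infinite, hence the graph is disconnected. No differences worth noting.
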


Once $n\geq 3$, the connectedness of $\Gamma(\text{Mat}_{n\times n})$ is guaranteed by $k$ being algebraically closed or \emph{real closed}, which means that $\overline{k}/k$ is a nontrivial finite extension. The graph $\Gamma(\text{Mat}_{n\times n})$ can be disconnected in other cases, such as over the field ${\mathbb Q}$ of rational numbers  \cite[Remark~8]{abm}.

\begin{theorem}[Corollary 7 in \cite{AMRR}]  Let $k$ be an algebraically closed field, and let $n\geq 3$.  Then $\Gamma(\text{Mat}_{n\times n})$ is connected, and $\text{diam}(\Gamma(\text{Mat}_{n\times n}))=4$.
\end{theorem}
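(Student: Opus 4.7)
The plan is to establish both connectedness and the diameter equality in two stages: show $\text{diam}(\Gamma(\text{Mat}_{n \times n})) \leq 4$ via a ``rank-one anchor'' technique, and then exhibit a concrete pair of matrices realizing distance exactly~$4$.

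For the upper bound, the first step would be a rank-one lemma: every non-scalar matrix $A \in \text{Mat}_{n \times n}$ commutes with some non-zero rank-one matrix. Since $k$ is algebraically closed, $A$ has an eigenvalue $\lambda$ with eigenvector $v \neq 0$, and because $A$ and $A^T$ share characteristic polynomial, $A^T$ has an eigenvector $w \neq 0$ for the same $\lambda$. Setting $B := v w^T$, one computes $AB = (Av) w^T = \lambda v w^T$ and $BA = v (w^T A) = \lambda v w^T$, so $A \leftrightarrow B$. Since $B \neq 0$ and $\rk(B) = 1 < n$, the matrix $B$ is non-scalar, yielding $d(A, B) \leq 1$.

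The second step is to show that any two non-zero rank-one matrices $B_1 = v_1 w_1^T$ and $B_2 = v_2 w_2^T$ satisfy $d(B_1, B_2) \leq 2$, where the hypothesis $n \geq 3$ enters. I would construct a common neighbor $C = v w^T$ of rank one satisfying the mutual annihilation conditions $w^T v_1 = w^T v_2 = 0$ and $w_1^T v = w_2^T v = 0$. Under these conditions, $C B_i = (w^T v_i)\, v w_i^T = 0$ and $B_i C = (w_i^T v)\, v_i w^T = 0$, so $C \leftrightarrow B_i$ for $i = 1, 2$. The solution sets for $w$ and for $v$ are each cut out by only two linear equations in $k^n$, so have dimension at least $n - 2 \geq 1$, allowing non-zero choices and producing a non-scalar rank-one $C$. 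Combining the two steps, for non-scalar $A_1, A_2$ one obtains
\[ d(A_1, A_2) \leq d(A_1, B_1) + d(B_1, B_2) + d(B_2, A_2) \leq 1 + 2 + 1 = 4, \]
where $B_i$ is a non-zero rank-one matrix commuting with $A_i$; scalar cases reduce immediately to $d \leq 1$ by the extended distance convention. This establishes connectedness together with $\text{diam}(\Gamma(\text{Mat}_{n\times n})) \leq 4$.

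For the matching lower bound, the plan is to exhibit an explicit pair of non-scalar matrices at distance exactly $4$ and to rule out every commuting chain of length at most $3$ between them by a careful analysis of their centralizers. The hard part will be precisely this lower-bound step: the upper bound flows cleanly once the rank-one anchor is in place, whereas verifying that no non-scalar $C_1, C_2$ satisfying $A \leftrightarrow C_1 \leftrightarrow C_2 \leftrightarrow B$ exist for well-chosen $A, B$ requires tracking the precise structure of centralizers (for instance via the Jordan form of each candidate $C_i$) and ruling out compatibility across all possible configurations.
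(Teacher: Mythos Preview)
Your proposal is correct and follows essentially the same approach as the paper: the upper bound via rank-one anchors built from left/right eigenvectors, together with a rank-one common neighbor obtained from the $n\ge 3$ dimension count, is exactly the construction the paper sketches. For the lower bound the paper likewise does not carry out the centralizer analysis in full but simply names the witnessing pair---an elementary Jordan block and its transpose---citing \cite{AMRR} for the verification that $d=4$; you may wish to adopt that specific example rather than leave the pair unspecified.
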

Below we provide a short proof of this fact for the completeness. 

To prove that the diameter is as most $4$, one constructs for any  $A$ and $B$ a commuting chain
$$A\leftrightarrow C\leftrightarrow D\leftrightarrow E\leftrightarrow B,$$
where $C$, $D$, and $E$ are nonscalar.
Choose $C$ and $E$ to be rank $1$ matrices built out of left and right eigenvectors of $A$ and of $B$, respectively.  Then choose $D$ to be a rank $1$ matrix commuting with both $C$ and $E$, which is possible for any two rank $1$ matrices as long as $n\geq 3$.  Thus, we have $d(A,B)\leq 4$ for all $A$ and $B$.  The same result for real closed fields is proved in~\cite{shitov}.

One way to see the diameter is indeed equal to $4$ (rather than to $2$ or $3$) is to note that an elementary Jordan matrix is always at distance four from its transpose, as proven in \cite{AMRR}.  The authors of \cite{DKO} go on to give a necessary condition for when two matrices can be at the maximal distance of four from one another.  A matrix $A$ is said to be \emph{non-derogatory} if  its characteristic polynomial is equal to its minimal polynomial.  Equivalently, a matrix is said to be non-derogatory if for each distinct eigenvalue $\lambda$ of $A$ there is only one Jordan block corresponding to $\lambda$ in the Jordan normal form for $A$
\cite[Chapter 7]{Cullen}.  If a matrix is not non-derogatory, we say it is \emph{derogatory}.

\begin{theorem}\label{theorem:equivalences} \cite[Theorem 1.1]{DKO}  Let $n\geq 3$ and $k$ be algebraically closed.  Then the following statements are equivalent for a non-scalar matrix $A\in \text{Mat}_{n\times n}$.
\begin{itemize}
\item[(i)]  $A$ is non-derogatory.
\item[(ii)] $A$ commutes only with elements of $k[A]$.
\item[(iii)]  There exists a matrix $B\in \text{Mat}_{n\times n}$ such that $d(A,B)=4$.
\end{itemize}
\end{theorem}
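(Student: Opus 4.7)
The plan is to prove the three equivalences in the cycle (i) $\Leftrightarrow$ (ii) $\Rightarrow$ (iii) $\Rightarrow$ (i). The equivalence (i) $\Leftrightarrow$ (ii) is a classical linear-algebra fact that I would quote directly: the centralizer of $A$ in $\text{Mat}_{n\times n}(k)$ equals $k[A]$ precisely when the minimal polynomial of $A$ coincides with its characteristic polynomial. This is exactly the criterion already invoked in the proof of Proposition \ref{prop:n=2}.

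For (i) $\Rightarrow$ (iii), after conjugation I may assume $A$ is in Jordan form $A = J_{\lambda_1}(n_1) \oplus \cdots \oplus J_{\lambda_r}(n_r)$ with the $\lambda_i$ distinct. My candidate witness is the block transpose $B = J_{\lambda_1}(n_1)^T \oplus \cdots \oplus J_{\lambda_r}(n_r)^T$. Because the eigenvalues are distinct across blocks, every matrix commuting with $A$ (or with $B$) must respect this block decomposition, so any hypothetical chain $A \leftrightarrow C \leftrightarrow D \leftrightarrow B$ of length three would split block-by-block. This contradicts the statement already proved in \cite{AMRR} that an elementary Jordan block is at distance exactly four from its transpose. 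Combined with the universal upper bound $d(A,B) \leq 4$, this yields $d(A,B) = 4$.

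For (iii) $\Rightarrow$ (i), I argue the contrapositive: if $A$ is non-scalar and derogatory, then $d(A,B) \leq 3$ for every non-scalar $B$. By definition of derogatory some eigenvalue $\lambda$ has geometric multiplicity at least two, so after replacing $A$ by $A - \lambda I$ I may assume $\dim \ker A \geq 2$ (and thus also $\dim \ker A^T \geq 2$). Given $B$, pick a right eigenvector $p$ and a left eigenvector $q^T$ of $B$ for a common eigenvalue, so that $E := pq^T$ commutes with $B$. Then I seek $C := vw^T$ with nonzero $v \in \ker A$ and nonzero $w \in \ker A^T$. Any such $C$ automatically commutes with $A$, and the identities $CE = (w^T p) v q^T$ and $EC = (q^T v) p w^T$ show that $C$ commutes with $E$ provided $w^T p = 0$ and $q^T v = 0$. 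Each of these is a single linear condition on a space of dimension at least two, so nonzero $v$ and $w$ exist, producing a chain $A \leftrightarrow C \leftrightarrow E \leftrightarrow B$ of length at most three. A few degenerate cases (e.g., $B$ scalar) are handled separately and give even shorter chains.

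The main obstacle will be the rigidity step in (i) $\Rightarrow$ (iii). Ruling out length-three chains demands tight control on the centralizers at each stage: even in the single-block case one must combine the fact that $k[J_\lambda(n)]$ consists of upper-triangular Toeplitz matrices, while $k[J_\lambda(n)^T]$ consists of lower-triangular Toeplitz matrices, with a verification that no non-scalar intermediate $D$ can simultaneously commute with a non-scalar element of each of these algebras. Transferring this to the multi-block setting via the block-diagonal reduction is routine once the single-block case is in hand.
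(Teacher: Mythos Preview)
The paper does not supply its own proof of this theorem; it is quoted from \cite{DKO}, so there is nothing in the paper to compare against. That said, your argument for (i) $\Rightarrow$ (iii) contains a genuine gap.

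Your candidate witness $B = A^T$ fails whenever the non-derogatory matrix $A$ has more than one Jordan block. If $A = J_{\lambda_1}(n_1) \oplus \cdots \oplus J_{\lambda_r}(n_r)$ with $r \geq 2$, then the non-scalar idempotent $P = I_{n_1} \oplus 0_{n_2} \oplus \cdots \oplus 0_{n_r}$ commutes with both $A$ and $A^T$, so in fact $d(A, A^T) \leq 2$. The flaw in your block-by-block reduction is that although $C$ and $D$ in a hypothetical length-three chain must indeed be block diagonal, the individual blocks $C_i$ and $D_i$ are permitted to be scalar; only the full matrices $C$ and $D$ are required to be non-scalar. Hence the chain does not descend to a length-three chain between $J_{\lambda_i}(n_i)$ and its transpose in the commuting graph of $n_i\times n_i$ matrices, and no contradiction with the \cite{AMRR} single-block result follows (and that result is not even available when some $n_i<3$). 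Producing a distance-four witness for an arbitrary non-derogatory matrix, not just a single Jordan block, is precisely the content of \cite{DKO} and requires a different construction.

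Your arguments for (i) $\Leftrightarrow$ (ii) and for the contrapositive of (iii) $\Rightarrow$ (i) are correct.
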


We close this section by proving Proposition \ref{prop:d_geq_4}.

\begin{proof}[Proof of Proposition \ref{prop:d_geq_4}]
In affine space over a finite field, any set is a variety, as it is the union of finitely many points.  Thus we will assume $k$ is infinite.

If $\textrm{diam}(\Gamma(\textrm{Mat}_{n\times n}(k)))\leq d$, then $\mathcal{C}_n^d(k)$ is all of $2n^2$-dimensional affine space, which is indeed a variety, namely $\textbf{V}(0)$.  Conversely, assume $\textrm{diam}(\Gamma(\textrm{Mat}_{n\times n}(k)))>d$.  This means that $\mathcal{C}_n^d(k)\subsetneq \textrm{Mat}_{n\times n}^2$.  Since $d\geq 4$, we know that $k$ is not algebraically closed.  We will show that the Zariski closure of $\mathcal{C}_n^4(k)$ is $\textrm{Mat}_{n\times n}^2$, implying that $\mathcal{C}_n^4(k)$ is not a variety.

Indeed, we first argue that the set of all pairs $(A,B)$, both diagonalizable over $k$, forms a Zariski dense set in $\textrm{Mat}_{n\times n}^2(k)$.  First we note that $k$ is Zariski dense in $\overline{k}$:  this is because any infinite subset of a field is Zariski dense in that field.  It follows that $\textrm{GL}_n(k)$ is Zariski dense in $\textrm{GL}_n({\overline{k}})$, and that $k^n$ is Zariski dense in ${\overline{k}}^n$.  This means that the image of the map
\begin{align*}  \textrm{GL}_n(k) \times k^n &\rightarrow \textrm{Mat}_{n\times n}(k)\\ (g,(\lambda_1,\cdots,\lambda_n)) &\mapsto g\, \textrm{diag}(\lambda_1,\cdots,\lambda_n) g^{-1}\end{align*}
is Zariski-dense in the image of the corresponding map over $\overline{k}$, which is Zariski dense in $\textrm{Mat}_{n\times n}\left(\overline{k}\right)$.  Thus the set of all $k$-diagonalizable matrices forms a Zariski dense subset of $\textrm{Mat}_{n\times n}\left({k}\right)$.

Let $(A,B)$ be a pair of matrices with entries in $k$, both diagonalizable over $k$; this means in particular that each has an eigenvalue in $k$, and so commutes with a rank $1$ matrix with entries in $k$.  If $C$ is a rank $1$ matrix commuting with $A$, and $D$ is a rank $1$ matrix commuting with $B$, then since $n\geq 3$ there exists a rank $1$ matrix $E$ commuting with both $C$ and $D$.
As all rank $1$ matrices are nonscalar, $d(A,B)\leq 4$.  As this holds on a Zariski dense subset of $\textrm{Mat}_{n\times n}^2(k)$, this means that $\overline{\mathcal{C}_n^4(k)}=\textrm{Mat}_{n\times n}^2(k)$.  Since $d\geq 4$ we have $\textrm{Mat}_{n\times n}^2(k)=\overline{\mathcal{C}_n^4(k)}\subset \overline{\mathcal{C}_n^d(k)}\subset \textrm{Mat}_{n\times n}^2(k)$, implying $\overline{\mathcal{C}_n^d(k)}=\textrm{Mat}_{n\times n}^2(k)$, thus completing the proof.
\end{proof}

\section{The distance-$2$ commuting set}
\label{section:distance2}

Fix $n\geq 3$, and let $k$ be any field.  In this section we will prove that $\mathcal{C}^{2}_n(k)$ is a variety.  In Proposition \ref{prop:minors}, we  show that it is the vanishing locus of certain minors of a $2n^2\times n^2$ matrix with linear polynomials as its entries.  We then present a more geometric construction of $\mathcal{C}^{2}_n$ as an affine variety over $\mathbb{C}$.  The proof that this construction actually gives $\mathcal{C}^{2}_n$, rather than some other set, relies on Proposition \ref{prop:minors}, and so does not constitute an independent proof that $\mathcal{C}^{2}_n$ is a variety.  
  We then find the irreducible decomposition of $\mathcal{C}_n^2$ over $\mathbb{C}$, as well as its dimension.

 \begin{proposition}\label{prop:minors}  Over any field, the set $\mathcal{C}^{2}_n$ is an affine variety.  In particular, it is defined by minors of a $2n^2\times n^2$ matrix with linear entries.
 \end{proposition}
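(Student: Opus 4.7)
The plan is to reformulate the condition $d(A,B)\le 2$ as a rank condition on a certain linear operator associated to the pair $(A,B)$, and then express this rank condition via vanishing of minors.

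First I would establish the key reformulation: for any $(A,B)\in\text{Mat}_{n\times n}^2$, we have $d(A,B)\le 2$ if and only if there exists a non-scalar matrix $C\in\text{Mat}_{n\times n}$ with $AC=CA$ and $BC=CB$. One direction is the definition of distance-$2$ when both $A,B$ are non-scalar (take $C$ to be the middle vertex of the commuting chain). The reverse direction and the various edge cases (when $A=B$, when one of $A,B$ is scalar, or when $A,B$ are non-scalar and commute) are handled by observing that in each such case one can produce a non-scalar witness $C$: e.g., $C=A$ if $A$ is non-scalar, and any non-scalar matrix otherwise (which exists since $n\ge 3$, and all scalars commute with everything). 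This reformulation is the main conceptual step; the edge cases are the primary source of friction and need careful bookkeeping to ensure the equivalence is exact.

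Next, I would encode the existence of such a $C$ linearly. Consider the $k$-linear map
\[
\phi_{A,B}:\text{Mat}_{n\times n}\longrightarrow \text{Mat}_{n\times n}^2,\qquad C\longmapsto (AC-CA,\,BC-CB).
\]
After fixing the standard basis on both sides, $\phi_{A,B}$ is represented by a $2n^2\times n^2$ matrix $M(A,B)$ whose entries are linear forms in the entries of $A$ and $B$ (each entry of $AC-CA$ is linear in $C$ with coefficients linear in $A$, and similarly for $B$). The scalar matrices $\lambda I$ lie in $\ker\phi_{A,B}$ for every $(A,B)$, so $\dim\ker\phi_{A,B}\ge 1$ always, i.e.\ $\rk M(A,B)\le n^2-1$.

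The crucial observation is that a non-scalar $C$ commuting with both $A$ and $B$ exists exactly when $\ker\phi_{A,B}$ has dimension at least $2$, i.e.\ when
\[
\rk M(A,B)\le n^2-2.
\]
This rank condition is equivalent to the simultaneous vanishing of all $(n^2-1)\times(n^2-1)$ minors of $M(A,B)$. Since these minors are polynomials in the entries of $A$ and $B$, the set
\[
\mathcal{C}^2_n=\{(A,B)\,|\,\rk M(A,B)\le n^2-2\}
\]
is cut out by polynomial equations, proving it is an affine variety defined by minors of a $2n^2\times n^2$ matrix with linear entries. Since the argument uses nothing beyond linear algebra over $k$, it is valid for any field, matching the statement of the proposition.
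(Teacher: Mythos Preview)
Your proposal is correct and is essentially the same argument as the paper's: both encode the common centralizer of $A$ and $B$ as the kernel of a $2n^2\times n^2$ matrix $M_{A,B}$ with linear entries, observe that scalars always lie in the kernel, and conclude that $d(A,B)\le 2$ is equivalent to $\rk M_{A,B}\le n^2-2$, hence to the vanishing of the $(n^2-1)\times(n^2-1)$ minors. You are slightly more explicit than the paper in checking the edge cases (one of $A,B$ scalar, or $A=B$, or $A\leftrightarrow B$) needed for the equivalence ``$d(A,B)\le 2\iff$ there exists a non-scalar common centralizer,'' which the paper asserts without comment.
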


 \begin{proof}  We will explicitly give the desired collection of polynomials that define the set $\mathcal{C}^{2}_n$.  Let $A,B,C$ be $n\times n$ matrices with entries given by $a_{ij}$, $b_{ij}$, and $c_{ij}$, respectively.  We have $A\leftrightarrow C$ if and only if
 $$\sum_{k=1}^na_{ik}c_{kj}-\sum_{\ell=1}^na_{\ell j}c_{i\ell}=0 $$
 for $1\leq i,j\leq n$.  Similarly, $B\leftrightarrow C$ if and only if
 $$\sum_{k=1}^nb_{ik}c_{kj}-\sum_{\ell=1}^nb_{\ell j}c_{i\ell}=0 $$
 for $1\leq i,j\leq n$.  Flattening the matrix $C$ into a vector $\textbf{c}$, we can write these equations together as
 $$M_{A,B}\textbf{c}=\textbf{0},$$
 where $M_{A,B}$ is an $2n^2\times n^2$ matrix with entries in $k[\{a_{ij},b_{ij}\}]$.  Thus $A\leftrightarrow C\leftrightarrow B$ if and only if the flattening of $C$ is in the nullspace of $M_{A,B}$.
 
 It follows that the set of matrices commuting with both $A$ and $B$ is a vector space (namely the vector space of solutions of the above equation) whose dimension is equal to the nullity of the matrix $M_{A,B}$.  The dimension of this vector space is at least $1$, since every scalar matrix commutes with all matrices.   Therefore, the nullity of $M_{A,B}$ is strictly greater than $1$ if and only if  $A$ and $B$  commute with a mutual non-scalar matrix, which is equivalent to  $d(A,B)\leq 2$.  By the rank-nullity theorem, $d(A,B)\leq 2$ if and only if the rank of $M_{A,B}$ is at most $n^2-2$.  This means that $(A,B)\in \mathcal{C}^{2}_n$ if and only if the $(n^2-1) \times (n^2-1)$ minors of $M_{A,B}$ are all equal to $0$.  These minors are polynomials in the entries of $A$ and $B$. Thus, the vanishing locus of these polynomials in $\text{Mat}_{n\times n}^2$ is precisely $\mathcal{C}^{2}_n$.  
 \end{proof}
 
 To see the techniques of this proof more explicitly, we construct the matrix $M_{A,B}$ for the case of $n=3$.
\begin{example}
Let $A,C,B\in\C^{3\times 3}$ such that
\[A=\begin{pmatrix}
a_{11} & a_{12} & a_{13}\\
a_{21} & a_{22} & a_{23}\\
a_{31} & a_{32} & a_{33}
\end{pmatrix}, \ B = \begin{pmatrix}
b_{11} & b_{12} & b_{13}\\
b_{21} & b_{22} & b_{23}\\
b_{31} & b_{32} & b_{33}
\end{pmatrix}, \ C = \begin{pmatrix}
c_{11} & c_{12} & c_{13}\\
c_{21} & c_{22} & c_{23}\\
c_{31} & c_{32} & c_{33}
\end{pmatrix}.\]

 Then $AC-CA=0$ if and only if the following nine equations are all satisfied: 
 \begin{align*}
  a_{13}c_{31} + a_{12}c_{21}+a_{11}c_{11}-c_{13}a_{31} -c_{12}a_{21}-c_{11}a_{11}=&0
  \\a_{13}c_{32} + a_{12}c_{22}+a_{11}c_{12}-c_{13}a_{32} - c_{12}a_{22}-c_{11}a_{12}  =&0
   \\ a_{13}c_{33} + a_{12}c_{23}+a_{11}c_{13}- c_{13}a_{33} - c_{12}a_{23}-c_{11}a_{13}=&0
    \\a_{23}c_{31} + a_{22}c_{21}+a_{21}c_{11}-  c_{23}a_{31} - c_{22}a_{21}-c_{21}a_{11} =&0
     \\a_{23}c_{32} + a_{22}c_{22}+a_{21}c_{12}-c_{23}a_{32} - c_{22}a_{22}-c_{21}a_{12} =&0
      \\ a_{23}c_{33} + a_{22}c_{23}+a_{21}c_{13}- c_{23}a_{33} - c_{22}a_{23}-c_{21}a_{13} =&0
       \\ a_{33}c_{31} + a_{32}c_{21}+a_{31}c_{11}- c_{33}a_{31} - c_{32}a_{21}-c_{31}a_{11}=&0
        \\a_{33}c_{32} + a_{32}c_{22}+a_{31}c_{12}-c_{33}a_{32} - c_{32}a_{22}-c_{31}a_{12}=&0
        \\ a_{33}c_{33} + a_{32}c_{23}+a_{31}c_{13}-c_{33}a_{33} -c_{32}a_{23}-c_{31}a_{13} =&0.
 \end{align*}
These equations can be expressed as $M_A\textbf{c}=\textbf{0}$, where
 \[M_A =\left( \begin{smallmatrix}
 0 & -a_{21} & -a_{31} & a_{12} & 0 & 0 & a_{13} & 0 & 0\\
-a_{12} & a_{11}-a_{22} & -a_{32} & 0 & a_{12} & 0 & 0 & a_{13} & 0\\ 
-a_{13} & -a_{23} & a_{11}-a_{33} & 0 & 0 & a_{12} & 0 & 0 & a_{13}\\
 a_{21} & 0 & 0 & a_{22}-a_{11} & -a_{21} & -a_{31} & a_{23} & 0 & 0\\
0 & a_{21} & 0 & -a_{12} & 0 & -a_{32} & 0 & a_{23} & 0\\ 
0 & 0 & a_{21} & -a_{13} & -a_{23} & a_{22}-a_{33} & 0 & 0 & a_{23}\\
 a_{31} & 0 & 0 & a_{32} & 0 & 0 & a_{33}-a_{11} & -a_{21} & -a_{31}\\
0 & a_{31} & 0 & 0 & a_{32} & 0 & -a_{12} & a_{33}-a_{22} & -a_{32}\\ 
0 & 0 & a_{31} & 0 & 0 & a_{32} & -a_{13} & -a_{23} & 0\\
\end{smallmatrix}\right)
 \] and $\textbf{c}=(c_{11},c_{12},c_{13},c_{21},c_{22},c_{23},c_{31},c_{32},c_{33})^T$; we remark that $M_A=A\otimes I- I\otimes A^T$, where $\otimes$ denotes the Kronecker product of matrices \cite[Chapter 4]{hj2}.
 Similarly, a matrix $M_B$ encodes the equation $BC-CB=0$.  The matrix $M_{A,B}$ is obtained by stacking $M_A$ on top of $M_B$.  So, in the $n=3$ case our equation $M_{A,B}\textbf{c}=\textbf{0}$ is
\[\left(\begin{smallmatrix}
 0 & -a_{21} & -a_{31} & a_{12} & 0 & 0 & a_{13} & 0 & 0\\
-a_{12} & a_{11}-a_{22} & -a_{32} & 0 & a_{12} & 0 & 0 & a_{13} & 0\\ 
-a_{13} & -a_{23} & a_{11}-a_{33} & 0 & 0 & a_{12} & 0 & 0 & a_{13}\\
 a_{21} & 0 & 0 & a_{22}-a_{11} & -a_{21} & -a_{31} & a_{23} & 0 & 0\\
0 & a_{21} & 0 & -a_{12} & 0 & -a_{32} & 0 & a_{23} & 0\\ 
0 & 0 & a_{21} & -a_{13} & -a_{23} & a_{22}-a_{33} & 0 & 0 & a_{23}\\
 a_{31} & 0 & 0 & a_{32} & 0 & 0 & a_{33}-a_{11} & -a_{21} & -a_{31}\\
0 & a_{31} & 0 & 0 & a_{32} & 0 & -a_{12} & a_{33}-a_{22} & -a_{32}\\ 
0 & 0 & a_{31} & 0 & 0 & a_{32} & -a_{13} & -a_{23} & 0\\
0 & -b_{21} & -b_{31} & b_{12} & 0 & 0 & b_{13} & 0 & 0\\
-b_{12} & b_{11}-b_{22} & -b_{32} & 0 & b_{12} & 0 & 0 & b_{13} & 0\\ 
-b_{13} & -b_{23} & b_{11}-b_{33} & 0 & 0 & b_{12} & 0 & 0 & b_{13}\\
 b_{21} & 0 & 0 & b_{22}-b_{11} & -b_{21} & -b_{31} & b_{23} & 0 & 0\\
0 & b_{21} & 0 & -b_{12} & 0 & -b_{32} & 0 & b_{23} & 0\\ 
0 & 0 & b_{21} & -b_{13} & -b_{23} & b_{22}-b_{33} & 0 & 0 & b_{23}\\
 b_{31} & 0 & 0 & b_{32} & 0 & 0 & b_{33}-b_{11} & -b_{21} & -b_{31}\\
0 & b_{31} & 0 & 0 & b_{32} & 0 & -b_{12} & b_{33}-b_{22} & -b_{32}\\ 
0 & 0 & b_{31} & 0 & 0 & b_{32} & -b_{13} & -b_{23} & 0\\
\end{smallmatrix}\right)
\left(\begin{smallmatrix}
c_{11}\\c_{12}\\c_{13}\\c_{21}\\c_{22}\\c_{23}\\c_{31}\\c_{32}\\c_{33}\\
\end{smallmatrix}\right)=
\left(\begin{smallmatrix}
0\\0\\0\\0\\0\\0\\0\\0\\0\\0\\0\\0\\0\\0\\0\\0\\0\\0\\
\end{smallmatrix}\right).\]
To find the defining equations for $\mathcal{C}_3^2$ in the polynomial ring $k[x_{11},\ldots,x_{33},y_{11},\ldots,y_{33}]$, we would consider the matrix
\[\left(\begin{smallmatrix}
 0 & -x_{21} & -x_{31} & x_{12} & 0 & 0 & x_{13} & 0 & 0\\
-x_{12} & x_{11}-x_{22} & -x_{32} & 0 & x_{12} & 0 & 0 & x_{13} & 0\\ 
-x_{13} & -x_{23} & x_{11}-x_{33} & 0 & 0 & x_{12} & 0 & 0 & x_{13}\\
 x_{21} & 0 & 0 & x_{22}-x_{11} & -x_{21} & -x_{31} & x_{23} & 0 & 0\\
0 & x_{21} & 0 & -x_{12} & 0 & -x_{32} & 0 & x_{23} & 0\\ 
0 & 0 & x_{21} & -x_{13} & -x_{23} & x_{22}-x_{33} & 0 & 0 & x_{23}\\
 x_{31} & 0 & 0 & x_{32} & 0 & 0 & x_{33}-x_{11} & -x_{21} & -x_{31}\\
0 & x_{31} & 0 & 0 & x_{32} & 0 & -x_{12} & x_{33}-x_{22} & -x_{32}\\ 
0 & 0 & x_{31} & 0 & 0 & x_{32} & -x_{13} & -x_{23} & 0\\
0 & -y_{21} & -y_{31} & y_{12} & 0 & 0 & y_{13} & 0 & 0\\
-y_{12} & y_{11}-y_{22} & -y_{32} & 0 & y_{12} & 0 & 0 & y_{13} & 0\\ 
-y_{13} & -y_{23} & y_{11}-y_{33} & 0 & 0 & y_{12} & 0 & 0 & y_{13}\\
 y_{21} & 0 & 0 & y_{22}-y_{11} & -y_{21} & -y_{31} & y_{23} & 0 & 0\\
0 & y_{21} & 0 & -y_{12} & 0 & -y_{32} & 0 & y_{23} & 0\\ 
0 & 0 & y_{21} & -y_{13} & -y_{23} & y_{22}-y_{33} & 0 & 0 & y_{23}\\
 y_{31} & 0 & 0 & y_{32} & 0 & 0 & y_{33}-y_{11} & -y_{21} & -y_{31}\\
0 & y_{31} & 0 & 0 & y_{32} & 0 & -y_{12} & y_{33}-y_{22} & -y_{32}\\ 
0 & 0 & y_{31} & 0 & 0 & y_{32} & -y_{13} & -y_{23} & 0\\
\end{smallmatrix}
\right)
\]  and use as the defining equations all determinants of $8\times 8$ submatrices. This yields ${9\choose 8}\cdot{18\choose 8}=393\,822$ polynomial equations. 
\end{example}

\begin{remark}
Specializing to the cases of  $k=\mathbb{R}$ or $k=\mathbb{C}$, the fields of real or complex numbers, Proposition \ref{prop:minors} leads to a nice property:  since affine varieties are closed in the Euclidean topology, the set $\mathcal{C}^{2}_n$ is closed.  For instance, if $\{A_i\}_{i=1}^\infty\subset\mathbb{C}^{n\times n}$ and $\{B_i\}_{i=1}^\infty\subset\mathbb{C}^{n\times n}$ are sequences of matrices such that
\begin{itemize}
\item[(i)] $\lim_{i\rightarrow\infty}A_i=A$ and $\lim_{i\rightarrow\infty}B_i=B$, and
\item[(ii)] for each $i$, there exists a non-scalar matrix $C_i$ such that $A_i\leftrightarrow C_i\leftrightarrow B_i$,
\end{itemize} then there must exist a non-scalar matrix $C\in \mathbb{C}^{n\times n}$ such that $A\leftrightarrow C\leftrightarrow B$.  (The same holds replacing $\mathbb{C}$ with $\mathbb{R}$.)  This is not immediately obvious even with the additional assumption that the limit $\lim_{i\rightarrow \infty} C_i$ exists, since the limit of non-scalar matrices could be a scalar matrix.
\end{remark}

We now provide a direct algebro-geometric construction for $\mathcal{C}^{2}_n$ over the field $\mathbb{C}$.

\begin{proposition}
Inside of the $3n^2$-dimensional space
$\text{Mat}_{n\times n}^3$, let
$$\mathcal{T}=\{(A,C,B)\,|\, A\leftrightarrow C\leftrightarrow B\}.$$
Let $\mathcal{S}$ be the set of all triples $(A,C,B)$ where $C$ is a scalar matrix.  Finally, let 
$$\pi:\text{Mat}_{n\times n}^3\rightarrow \text{Mat}_{n\times n}^2$$
be projection onto the $A$ and $B$ coordinates, so that $\pi(A,C,B)=(A,B)$.
Then  both $\mathcal{T}$ and $\mathcal{S}$ are  affine varieties in $3n^2$-dimensional space, and
$$\mathcal{C}^{2}_n=\pi(\mathcal{T}\setminus \mathcal{S}).$$
\end{proposition}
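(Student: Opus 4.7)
The plan is to prove the three assertions separately. The two variety claims are immediate from definitions, and the set-theoretic equality $\mathcal{C}^{2}_n = \pi(\mathcal{T}\setminus \mathcal{S})$ reduces to a short case analysis using the extended definition of $d(\cdot,\cdot)$ given above.

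For the variety claims, I would work in the $3n^2$-dimensional affine space with coordinates arising from three matrices of indeterminates $X$, $Z$, $Y$ standing in for $A$, $C$, $B$ respectively. Then $\mathcal{T}$ is cut out by the $2n^2$ polynomial equations given by the entries of $XZ - ZX$ and of $YZ - ZY$, so it is an affine variety. The set $\mathcal{S}$ is cut out by the linear equations $z_{ij} = 0$ for $i \neq j$ together with $z_{11} - z_{jj} = 0$ for $j \geq 2$, placing no constraints on the $X$ or $Y$ coordinates; hence it too is an affine variety.

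For the equality $\mathcal{C}^{2}_n = \pi(\mathcal{T}\setminus \mathcal{S})$, both inclusions amount to relating the existence of a non-scalar $C$ with $A \leftrightarrow C \leftrightarrow B$ to the condition $d(A,B) \leq 2$. For $\pi(\mathcal{T}\setminus \mathcal{S}) \subseteq \mathcal{C}^{2}_n$: given such a non-scalar $C$, if $A = B$ or one of $A, B$ is scalar, then the convention extending $d(\cdot,\cdot)$ to scalar inputs forces $d(A,B) \leq 1$ directly; otherwise, either $AB = BA$ (so $d(A,B) \leq 1$) or else the chain $A \leftrightarrow C \leftrightarrow B$ witnesses $d(A,B) = 2$. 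For the reverse inclusion, given $(A,B)$ with $d(A,B) \leq 2$, the case $d(A,B) = 2$ supplies a non-scalar $C$ by the very definition of distance two. In the remaining cases $d(A,B) \leq 1$, I would produce $C$ by hand: if at least one of $A, B$, say $A$, is non-scalar, take $C = A$, which commutes with itself and with $B$ (since either $B = A$, $B$ is scalar, or $A$ and $B$ commute in the usual sense); if both $A$ and $B$ are scalar, any non-scalar matrix commutes with both and serves as $C$.

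The proof is essentially a bookkeeping exercise. The only points requiring care are the edge cases in which one or both of $A, B$ is scalar, where the commuting-chain definition does not directly apply and one must instead invoke the extended conventions for $d(\cdot,\cdot)$. I anticipate no substantive obstacle.
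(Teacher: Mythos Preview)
Your proposal is correct and follows essentially the same approach as the paper: the same defining equations for $\mathcal{T}$ and $\mathcal{S}$, and the set equality handled by unwinding definitions. The paper's proof is terser, dispatching $\mathcal{C}^{2}_n=\pi(\mathcal{T}\setminus \mathcal{S})$ with the phrase ``follows by definition,'' whereas you have (reasonably) spelled out the scalar edge cases explicitly.
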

\begin{proof}
Let $X$, $Z$, and $Y$ be matrices of variables corresponding to the coordinates of $A$, $C$, and $B$, respectively. Then $\mathcal{T}=\textbf{V}(I)$, where the ideal $I$ is generated by the polynomials $(XZ-ZX)_{ij}$ and $(YZ-ZY)_{ij}$; and  $\mathcal{S}=\textbf{V}(J)$, where the ideal $J$ is generated by the polynomials $z_{11}-z_{ii}$ (for $2\leq i\leq n$) and $z_{ij}$ (for $i\neq j$).  The fact that  $\mathcal{C}^{2}_n=\pi(\mathcal{T}\setminus \mathcal{S})$ follows by definition.
\end{proof}
Now we wish to take the set-theoretic difference of one variety from another, and then project the resulting set to a lower dimensional space.  It is possible to do this by manipulating ideals, at least up to Zariski closure:  By Propositions \ref{clo:projection} and \ref{clo:difference}, the set-minus operation corresponds to ideal saturation, and projection corresponds to elimination of variables.  In particular,
$$\textbf{V}(I:J^\infty)=\overline{\mathcal{T}\setminus \mathcal{S}},$$
and 
$$\textbf{V}((I:J^\infty)\cap k[X,Y])=\overline{\pi\left(\overline{\mathcal{T}\setminus \mathcal{S}}\right)}.$$
We will show that we may remove the Zariski closures, so that this ideal does in fact define~$\mathcal{C}_n^2$.

\begin{proposition} \label{proposition:nozariski} When $k=\mathbb{C}$, we have
$$\textbf{V}((I:J^\infty)\cap k[X,Y]))=\pi{(\mathcal{T}\setminus \mathcal{S})}.$$
\end{proposition}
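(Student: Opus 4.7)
The plan is to leverage Proposition \ref{prop:minors}, which already establishes that $\mathcal{C}_n^2$ is a Zariski-closed variety, to collapse the nested closures that naturally arise from Propositions \ref{clo:projection} and \ref{clo:difference}. Without that independent closed description of $\mathcal{C}_n^2$, the statement would be substantially harder; with it, the result reduces to a short chase of containments.

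First I would unwind the left-hand side: Proposition \ref{clo:difference} gives $\textbf{V}(I:J^\infty) = \overline{\mathcal{T}\setminus\mathcal{S}}$, and then Proposition \ref{clo:projection} applied to $I:J^\infty$ yields
\[
\textbf{V}\bigl((I:J^\infty)\cap k[X,Y]\bigr) \;=\; \overline{\pi\bigl(\overline{\mathcal{T}\setminus\mathcal{S}}\bigr)}.
\]
So the goal reduces to verifying $\overline{\pi(\overline{\mathcal{T}\setminus\mathcal{S}})} = \pi(\mathcal{T}\setminus\mathcal{S})$.

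Next I would check the tautological equality $\pi(\mathcal{T}\setminus\mathcal{S}) = \mathcal{C}_n^2$: by construction, a pair $(A,B)$ lies in the left-hand side exactly when there exists a non-scalar $C$ with $A\leftrightarrow C \leftrightarrow B$, which is precisely the condition $d(A,B)\leq 2$. The edge cases where $A$ or $B$ is a scalar are handled by the conventions on the distance function (setting $d(A,B)=1$ when one of them is scalar and $A\neq B$) together with the fact that scalar matrices commute with every non-scalar matrix. Combined with Proposition \ref{prop:minors}, which tells us $\mathcal{C}_n^2$ is Zariski-closed, this gives $\overline{\pi(\mathcal{T}\setminus\mathcal{S})} = \mathcal{C}_n^2$.

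To close the loop I would use Zariski-continuity of the polynomial map $\pi$: for any continuous map one has $\pi(\overline{X})\subseteq \overline{\pi(X)}$, which here gives $\pi(\overline{\mathcal{T}\setminus\mathcal{S}}) \subseteq \overline{\pi(\mathcal{T}\setminus\mathcal{S})} = \mathcal{C}_n^2$, and taking closures again yields $\overline{\pi(\overline{\mathcal{T}\setminus\mathcal{S}})} \subseteq \mathcal{C}_n^2$. The reverse containment $\mathcal{C}_n^2 = \pi(\mathcal{T}\setminus\mathcal{S}) \subseteq \overline{\pi(\overline{\mathcal{T}\setminus\mathcal{S}})}$ is immediate. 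Hence every set in the chain equals $\pi(\mathcal{T}\setminus\mathcal{S})$, and in particular $\textbf{V}((I:J^\infty)\cap k[X,Y]) = \pi(\mathcal{T}\setminus\mathcal{S})$, as required. The main conceptual obstacle, were Proposition \ref{prop:minors} unavailable, would be ruling out the possibility that the Zariski closure of $\mathcal{T}\setminus\mathcal{S}$ contains new triples $(A,C,B)$ with $C$ scalar whose projections fall outside $\mathcal{C}_n^2$; fortunately the explicit minor description bypasses that difficulty entirely.
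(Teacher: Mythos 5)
Your proposal is correct, and it reaches the conclusion by a genuinely different (and in fact shorter) route than the paper. The paper also reduces everything to showing $\pi(\overline{\mathcal{R}})\subseteq\pi(\mathcal{R})$ where $\mathcal{R}=\mathcal{T}\setminus\mathcal{S}$, using Proposition \ref{prop:minors} to know that $\pi(\mathcal{R})=\mathcal{C}_n^2$ is Zariski closed; but it then establishes that containment analytically: since $\mathcal{R}$ is constructible, its Zariski closure equals its Euclidean closure by Proposition \ref{proposition:closure}, so a point of $\overline{\mathcal{R}}$ is a Euclidean limit of a sequence in $\mathcal{R}$, and since $\mathcal{C}_n^2$ is Euclidean-closed the limiting pair stays in $\mathcal{C}_n^2$. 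You instead invoke only the Zariski-continuity of the polynomial projection $\pi$, giving $\pi(\overline{\mathcal{R}})\subseteq\overline{\pi(\mathcal{R})}=\mathcal{C}_n^2$ and hence $\overline{\pi(\overline{\mathcal{R}})}\subseteq\mathcal{C}_n^2=\pi(\mathcal{R})$, with the reverse containment trivial; this is a standard fact about continuous maps and is valid here, and your verification that $\pi(\mathcal{T}\setminus\mathcal{S})=\mathcal{C}_n^2$ (including the scalar and $A=B$ edge cases) matches what the paper asserts by definition in the preceding proposition. What your approach buys is economy and generality: it dispenses with constructibility, Mumford's theorem, and the Euclidean topology altogether, so the same argument works verbatim over any algebraically closed field rather than only over $\mathbb{C}$; what the paper's approach buys is a concrete sequence-based picture (limits of commuting triples), which is in the spirit of the remark following Proposition \ref{prop:minors}. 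Both arguments hinge essentially on Proposition \ref{prop:minors}, exactly as you identify.
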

\begin{proof}  Let $\mathcal{R}=\mathcal{T}\setminus \mathcal{S}$.  Then we wish to show
$$\pi(\mathcal{R})=\overline{\pi(\overline{\mathcal{R}})}. $$
Certainly we have $\pi(\mathcal{R})\subset \overline{\pi(\overline{\mathcal{R}})}$.  Since $\pi(\mathcal{R})=\mathcal{C}^2_n$ is a variety by Proposition \ref{prop:minors}, it is Zariski closed.  Thus if we can show that $\pi(\overline{\mathcal{R}})\subset \pi(\mathcal{R})$, it will follow that  $\overline{\pi(\overline{\mathcal{R}})}\subset \pi(\mathcal{R})$, since  $\overline{\pi(\overline{\mathcal{R}})}$ is the smallest Zariski-closed set containing $\pi(\overline{\mathcal{R}})$.

Suppose that $(A,B)\in {\pi(\overline{\mathcal{R}})}$.  It follows that there exists $C$ such that $(A,C,B)\in\overline{\mathcal{R}}$.  Since $\mathcal{R}$ is a constructible set, its Zariski closure is equal to its closure in the Euclidean topology by Proposition \ref{proposition:closure}.  It follows that there exists a sequence of points $\{(A_i,C_i,B_i)\}_{i=0}^\infty\subset \mathcal{R}\subset\mathcal{T}$ such that $\lim_{i\rightarrow\infty}(A_i,C_i,B_i)=(A,C,B)$.  {Then} 
$\lim_{i\rightarrow\infty}(A_i,B_i)=(A,B)$.  Since $(A_i,B_i)\in\pi(\mathcal{R})=\mathcal{C}^2_n$, and since  $\mathcal{C}^2_n$ is closed in the Euclidean topology, we have that $(A,B)=\lim_{i\rightarrow\infty}(A_i,B_i)\in \mathcal{C}^2_n$.  So, ${\pi(\overline{\mathcal{R}})}\subset \mathcal{C}^2_n$.  This completes the proof.
\end{proof}


 If the field $k$ is algebraically closed, we now determine the decomposition of $\mathcal{C}_n^2$ into irreducible varieties, as well as its dimension.  The key players will be the varieties $\overline{\mathcal{Z}_i}$ for $1\leq i\leq \lfloor\frac{n}{2}\rfloor$, where $\mathcal{Z}_i$ is the set of all pairs of matrices that commute with a common rank $i$ matrix that squares to itself:
\[\mathcal{Z}_i=\{(A,B)\,|\, \text{$\exists P\in\textrm{Mat}_{n\times n}$ s.t. $P^2=P$, $\textrm{rank}(P)=i$, $AP=PA$, $BP=PB$}\}.\]
We start with the following two lemmas.

\begin{lemma}\label{lemma:c2n_decomposition}
When $k$ is algebraically closed, we have
\[\mathcal{C}_n^2=\overline{\mathcal{Z}_1}\cup\cdots\cup \overline{\mathcal{Z}_{\lfloor n/2\rfloor}}.\]\end{lemma}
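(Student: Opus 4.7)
My proof would split into the two inclusions.

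For $(\supseteq)$: if $(A,B) \in \mathcal{Z}_i$ with $1 \leq i \leq \lfloor n/2 \rfloor$, then there is an idempotent $P$ of rank $i$ commuting with both $A$ and $B$; since $1 \leq i \leq n-1$, $P$ is non-scalar, so the chain $A \leftrightarrow P \leftrightarrow B$ witnesses $d(A,B) \leq 2$. Thus $\mathcal{Z}_i \subseteq \mathcal{C}_n^2$, and since $\mathcal{C}_n^2$ is Zariski-closed by Proposition \ref{prop:minors}, we conclude $\overline{\mathcal{Z}_i} \subseteq \mathcal{C}_n^2$.

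For $(\subseteq)$: I would first observe that $\mathcal{Z}_i = \mathcal{Z}_{n-i}$ via $P \mapsto I-P$, so the right-hand side equals $\overline{\mathcal{Z}_1 \cup \cdots \cup \mathcal{Z}_{n-1}}$. The main tool is the following observation: \emph{if $(A,B) \in \mathcal{C}_n^2$ admits a non-scalar common commuter $C$ with at least two distinct eigenvalues, then $(A,B) \in \mathcal{Z}_r$ for some $1 \leq r \leq n-1$.} The reason is that each generalized-eigenspace projection of $C$ is a polynomial in $C$ (by the Chinese Remainder Theorem applied to the factored minimal polynomial of $C$), hence commutes with anything commuting with $C$---in particular, with $A$ and $B$---and has rank strictly between $0$ and $n$. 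It thus suffices to exhibit every $(A,B) \in \mathcal{C}_n^2$ as a Zariski limit of pairs satisfying this hypothesis.

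I would handle the approximation by cases. If $A \leftrightarrow B$, then $(A,B) \in \mathcal{C}_n^1$, which is irreducible with a dense open subset where $A$ has $n$ distinct eigenvalues; on that subset, taking $C = A$ meets the hypothesis. If $A \not\leftrightarrow B$, choose any non-scalar common commuter $C$; if $C$ already has $\geq 2$ distinct eigenvalues we are done, so assume $C = \lambda I + N$ with $N$ a nonzero nilpotent. If $N$ were a single Jordan block of size $n$, then $C(N) = k[N]$ would be commutative, forcing $A, B \in C(N)$ to commute---contradiction. Hence $N$ must have at least two Jordan blocks.

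The hardest step will be converting this structural data into an actual perturbation $(A_t, B_t, C_t)$ of $(A, B, C)$ preserving $A_t C_t = C_t A_t$ and $B_t C_t = C_t B_t$ throughout while forcing $C_t$ to acquire multiple distinct eigenvalues for $t \neq 0$. When the common centralizer $C(A, B)$ already contains a matrix with $\geq 2$ distinct eigenvalues, I can simply vary $C$ along a line through that matrix within $C(A, B)$ and invoke the key observation. The delicate case is when every element of $C(A, B)$ has a single eigenvalue; there I would attempt a tangent-space argument, perturbing $C$ by a block-diagonal matrix $D$ (in a Jordan basis of $N$) with distinct scalars on its blocks, and simultaneously correcting $A, B$ to $A + tA'$ and $B + tB'$ by solving the first-order equations $[A', N] = [D, A]$ and $[B', N] = [D, B]$. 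Alternatively, a dimension count combined with upper-semicontinuity of centralizer dimension could show that triples with multi-eigenvalue $C$ are Zariski dense in each irreducible component of $\{(A, B, C) : AC = CA, \; BC = CB, \; C \text{ non-scalar}\}$, after which projection to $\text{Mat}_{n\times n}^2$ completes the proof.
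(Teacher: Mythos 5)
The easy inclusion and your key observation are fine: $\overline{\mathcal{Z}_i}\subseteq\mathcal{C}_n^2$ follows from Proposition \ref{prop:minors}, and if some non-scalar common commuter $C$ of $A$ and $B$ has at least two distinct eigenvalues, its spectral projections are polynomials in $C$ and give a nontrivial common idempotent, so $(A,B)\in\mathcal{Z}_r$ for some $r$ (replacing $P$ by $I-P$ if needed to get $r\le\lfloor n/2\rfloor$). Your treatment of commuting pairs via irreducibility of $\mathcal{C}_n^1$ also works, though it imports the Motzkin--Taussky density fact that the paper does not need. The genuine gap is the case you yourself flag as delicate: $A\not\leftrightarrow B$ and every non-scalar element of the common centralizer is of the form $\lambda I+N$ with $N$ nilpotent. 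Here you only sketch two possible strategies and carry out neither. The tangent-space idea is insufficient as stated: you neither show that the first-order equations $[A',N]=[D,A]$, $[B',N]=[D,B]$ are solvable (this requires $[D,A]$ to lie in the image of $\mathrm{ad}_N$, which is unclear), nor would a first-order solution be enough --- to conclude $(A,B)\in\overline{\mathcal{Z}_i}$ you need actual points of $\mathcal{Z}_i$ converging to $(A,B)$, i.e.\ an exact deformation, and at a possibly singular point of the incidence variety tangent vectors need not integrate to curves inside it. The alternative claim, that triples whose third coordinate has several eigenvalues are dense in every irreducible component of $\{(A,B,C): AC=CA,\ BC=CB,\ C\ \text{non-scalar}\}$, is essentially the statement to be proved, and you give no argument for it.

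The paper closes exactly this case with a structural input you are missing: it replaces the common commuter by a \emph{maximal} one (no non-scalar matrix has centralizer strictly containing its centralizer) and invokes \cite[Theorem 3.2]{dgko}, which says that over an algebraically closed field such a maximal matrix is either a nontrivial idempotent or a nonzero matrix squaring to zero. The idempotent case is your spectral-projection case; in the square-zero case the paper conjugates $C$ to the canonical form with blocks of sizes $i,i,n-2i$, reads off the resulting block structure of $A$ and $B$, and writes down explicit matrices $X,Y,Z$ such that $C+\lambda X$ commutes with $A+\lambda Y$ and $B+\lambda Z$ for \emph{all} $\lambda$, with $C+\lambda X$ a scalar multiple of a rank-$(n-i)$ idempotent for $\lambda\neq 0$; hence $(A,B)\in\overline{\mathcal{Z}_i}$. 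Some such exact deformation (or a substitute for the maximal-commuter classification) is what your proposal still needs to become a proof.
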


\begin{proof}
By definition $\mathcal{Z}_i\subset\mathcal{C}_n^2 $ for all $i$, and since $\mathcal{C}_n^2$ is a variety we have  $\overline{\mathcal{Z}_i}\subset\mathcal{C}_n^2 $ as well. 
This gives one direction of containment.

For the other direction, let $(A,B)\in \mathcal{C}_n^2$, so that there exists a non-scalar matrix $C'\in\textrm{Mat}_{n\times n}$ commuting with both $A$ and $B$.  
Then there exists a maximal matrix  $C \in\textrm{Mat}_{n\times n}$ commuting with both $A$ and $B$.
Here maximality is considered in the sense of \cite{dgko}, that is the centralizer of no non-scalar matrix properly contains the centralizer of $C$. Indeed, if $C'$ is not maximal, then its centralizer is contained in a centralizer of a certain non-scalar maximal matrix $C$. Hence both $A$ and $B$ commute with $C$. Since $k$ is algebraically closed by \cite[Theorem 3.2]{dgko} 
$C$ is either a nontrivial idempotent matrix, or a nonzero matrix that squares to the zero matrix. In the first case, since $C$ and $I-C$ are maximal idempotents which commute with $A$ and $B$, we have $(A,B)\in \mathcal{Z}_i$ where $i=\min\{\textrm{rank}(C),\textrm{rank}(I-C)\}$. Now assume that $C$ is a nonzero matrix that squares to $0$. Permuting the rows and columns in the Jordan canonical form of $C$ in order to collect the zero rows together  we 
obtain that $C$ is similar to the matrix 
$C''=\begin{pmatrix}
0 & I & 0\\
0 & 0 & 0\\
0 & 0 & 0
\end{pmatrix}$ where the first two block rows and columns are of size $i$ and the last ones are of size $n-2i$ (possibly 0). Without loss of generality we  further assume that $C=C''$. Commutativity then implies that $A=\begin{pmatrix}
A_1 & A_2 & A_3\\
0   & A_1 & 0\\
0   & A_4 & A_5
\end{pmatrix}$ and
$B=\begin{pmatrix}
B_1 & B_2 & B_3\\
0   & B_1 & 0\\
0   & B_4 & B_5
\end{pmatrix}$ for some submatrices $A_j$ and $B_j$. Now we 
consider
$X=\begin{pmatrix}
0 & 0 & 0\\
0 & I & 0\\
0 & 0 & I
\end{pmatrix}$,
$Y=\begin{pmatrix}
0 & 0   & 0\\
0 & A_2 & A_3\\
0 & 0   & 0
\end{pmatrix}$ and
$Z=\begin{pmatrix}
0 & 0   & 0\\
0 & B_2 & B_3\\
0 & 0   & 0
\end{pmatrix}$. Then $C+\lambda X$ commutes with $A+\lambda Y$ and with $B+\lambda Z$ for each $\lambda \in k$, so $(A+\lambda Y,B+\lambda Z)\in \mathcal{C}_n^2$ for each $\lambda \in k$. Moreover, for $\lambda \ne 0$ the matrix $C+\lambda X$ is a multiple of an idempotent of rank $n-i$, therefore $(A+\lambda Y,B+\lambda Z)\in \mathcal{Z}_i$ for each $\lambda \ne 0$, and consequently $(A,B)\in \overline{\mathcal{Z}_i}$.
\end{proof}

\begin{lemma}\label{lemma:z_dimensions}
For $1\leq i\leq \lfloor\frac{n}{2}\rfloor$, the variety $\overline{\mathcal{Z}_i}$ is irreducible, with dimension $n^2+i^2+(n-i)^2$.
\end{lemma}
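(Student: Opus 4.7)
The plan is to realize $\mathcal{Z}_i$ as the image of a larger, more symmetric variety under a generically finite projection, and then apply the fibre dimension theorem. Consider the incidence correspondence
\[\mathcal{W}_i=\bigl\{(A,B,P)\in\text{Mat}_{n\times n}^3 \,\big|\, P^2=P,\ \text{tr}(P)=i,\ AP=PA,\ BP=PB\bigr\},\]
which is Zariski-closed because over an algebraically closed field an idempotent is diagonalizable, so the rank-$i$ condition is equivalent to the polynomial equation $\text{tr}(P)=i$. The projection $\pi:\mathcal{W}_i\to\text{Mat}_{n\times n}^2$, $(A,B,P)\mapsto(A,B)$, has image exactly $\mathcal{Z}_i$.

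First I would show that $\mathcal{W}_i$ is irreducible of dimension $n^2+i^2+(n-i)^2$. Fix $P_0=\text{diag}(I_i,0)$; since every rank-$i$ idempotent is $\text{GL}_n(k)$-conjugate to $P_0$ and the matrices commuting with $P_0$ are precisely the block-diagonal matrices, the morphism
\[\Psi:\text{GL}_n(k)\times\text{Mat}_{i\times i}(k)^2\times\text{Mat}_{(n-i)\times(n-i)}(k)^2\longrightarrow\mathcal{W}_i,\]
\[(g,A_1,B_1,A_2,B_2)\longmapsto\bigl(g(A_1\oplus A_2)g^{-1},\,g(B_1\oplus B_2)g^{-1},\,gP_0g^{-1}\bigr),\]
is surjective. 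Its source is irreducible of dimension $n^2+2i^2+2(n-i)^2$, and every fibre of $\Psi$ is a coset of the stabilizer $\text{GL}_i(k)\times\text{GL}_{n-i}(k)$ of $P_0$ under conjugation, hence of dimension $i^2+(n-i)^2$. Subtracting gives the claimed dimension of $\mathcal{W}_i$, and irreducibility is inherited from the irreducible source.

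Because the closure of the image of an irreducible variety is irreducible, $\overline{\mathcal{Z}_i}=\overline{\pi(\mathcal{W}_i)}$ is irreducible. By the fibre dimension theorem, it then suffices to exhibit a single point $(A,B)\in\mathcal{Z}_i$ whose $\pi$-fibre is finite; this forces the generic fibre to be zero-dimensional, so $\dim\overline{\mathcal{Z}_i}=\dim\mathcal{W}_i$. I would choose $(A_j,B_j)\in\text{Mat}_{j\times j}(k)^2$ for $j\in\{i,n-i\}$ generating the full matrix algebra $\text{Mat}_{j\times j}(k)$ as an associative $k$-algebra — a non-empty Zariski-open condition — and set $A=A_1\oplus A_2$, $B=B_1\oplus B_2$. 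Any rank-$i$ idempotent $P'$ commuting with both $A$ and $B$ is determined by its image and kernel, each of which must be invariant under the subalgebra $\langle A,B\rangle\subset\text{Mat}_{n\times n}(k)$. The main obstacle is the invariant-subspace analysis: one must check that because $\langle A,B\rangle$ contains $\text{Mat}_{i\times i}(k)\oplus\text{Mat}_{(n-i)\times(n-i)}(k)$, the only $\langle A,B\rangle$-invariant subspaces of $k^n$ are $0$, $k^i\oplus 0$, $0\oplus k^{n-i}$, and $k^n$. Once this is verified, at most two rank-$i$ idempotents arise (only $P_0$, together with $I-P_0$ in the case $i=n-i$), so the fibre is finite and the fibre dimension theorem completes the proof.
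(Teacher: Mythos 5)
Your overall architecture is sound and runs parallel to the paper's: the paper writes $\mathcal{Z}_i$ as the image of $\textrm{GL}_n\times\mathcal{Z}_P$ under the simultaneous-conjugation map and computes the generic fibre over pairs where $A$ has $n$ distinct eigenvalues, while you record the idempotent as an extra coordinate and then argue that forgetting it is a generically finite projection. Two steps need repair, one minor and one genuine. The minor one: $\textrm{tr}(P)=i$ is equivalent to $\textrm{rank}(P)=i$ for idempotents only when the characteristic is $0$ or larger than $n$; in characteristic $p$ any idempotent of rank $j\equiv i \pmod p$ satisfies it (in characteristic $2$ with $n=3$, $i=1$, even $P=I$ does), so your $\mathcal{W}_i$ can acquire extra components and its image can be strictly larger than $\mathcal{Z}_i$. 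This is fixed by instead imposing that the characteristic polynomial of $P$ equals $x^{n-i}(x-1)^i$, or by noting that the rank-$i$ idempotents form the closed $\textrm{GL}_n$-orbit of $\textrm{diag}(I_i,0)$.

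The genuine gap is in the finite-fibre step. From ``$(A_1,B_1)$ generates $\textrm{Mat}_{i\times i}$ and $(A_2,B_2)$ generates $\textrm{Mat}_{(n-i)\times(n-i)}$'' you cannot conclude that $\langle A,B\rangle\supseteq \textrm{Mat}_{i\times i}\oplus\textrm{Mat}_{(n-i)\times(n-i)}$; you only get a subalgebra surjecting onto each factor. For $i<n-i$ the containment does follow (the kernel of either projection is an ideal of a simple factor, and a proper subdirect product would force an isomorphism $\textrm{Mat}_{i\times i}\cong\textrm{Mat}_{(n-i)\times(n-i)}$), but you give no such argument. For $i=n-i=n/2$ the implication is simply false: if $(A_2,B_2)=(gA_1g^{-1},gB_1g^{-1})$, the generated algebra is a twisted diagonal copy of $\textrm{Mat}_{(n/2)\times(n/2)}$, its commutant is isomorphic to $\textrm{Mat}_{2\times 2}(k)$, and there is then a $\mathbb{P}^1$ of rank-$n/2$ idempotents commuting with both $A$ and $B$ --- so a point satisfying your stated open condition can have a positive-dimensional fibre. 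The repair is easy and lets you skip the invariant-subspace analysis altogether: take $(A,B)\in\mathcal{Z}_i$ with $A$ having $n$ distinct eigenvalues (say $A$ diagonal with distinct entries and $B$ block diagonal with blocks of sizes $i$ and $n-i$). Every idempotent commuting with such an $A$ is a sum of its eigenprojections, so at most $2^n$ idempotents commute with $A$ at all and the fibre over $(A,B)$ is finite regardless of $B$; this is exactly the generic locus the paper exploits. With these two fixes your argument is correct and yields the same dimension count as the paper's.
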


\begin{proof}
Fix $P$ to be an idempotent of rank $i$.  The set $\mathcal{Z}_P=\{(A,B)\,|\, AP=PA, BP=PB\}\subset\mathcal{C}_n^2$ is an affine space of dimension $2(i^2+(n-i)^2)$, and is therefore irreducible.  Now, the group $\textrm{GL}_n$ acts on $\textrm{Mat}_{n\times n}$ by conjugation, which induces the $\textrm{GL}_n$-action on $\textrm{Mat}_{n\times n}^2$ by simultaneous conjugation: $(g,(A,B))\mapsto (gAg^{-1},gBg^{-1})$. Let $\cdot$ denote this action. 
Consider the orbit of $\mathcal{Z}_P$ under this action.  Since any two idempotent matrices of the same rank are similar, we have that $\mathcal{Z}_i=\textrm{GL}_n\cdot \mathcal{Z}_P$.  Since $\textrm{GL}_n$ is a connected group acting on an irreducible variety $\mathcal{Z}_P$, we have that $\overline{\mathcal{Z}_i}$ is irreducible.  (To see this, let $\varphi: \textrm{GL}_n\times \textrm{Mat}_{n\times n}^2\rightarrow \textrm{Mat}_{n\times n}^2$ denote the morphism of the group action, so that $\mathcal{Z}_i$ is the image of  $\textrm{GL}_n\times \mathcal{Z}_P$ under $\varphi$.  Since $\textrm{GL}_n$ and $\mathcal{Z}_P$ are irreducible, so too is their product, and so too is the image of the product under $\varphi$.)

We are now ready to compute the dimension of $\overline{\mathcal{Z}_i}$.  Choose $P$ to be the idempotent matrix $\left(\begin{smallmatrix}I&0\\0&0\end{smallmatrix}\right)$, where $I$ is the $i\times i$ identity matrix.  Consider the following map:
\begin{align*}
\varphi:\textrm{GL}_n\times \mathcal{Z}_P&\rightarrow \mathcal{Z}_i
\\(g,(A,B))&\mapsto g\cdot (A,B)= (gAg^{-1},g Bg^{-1}).
\end{align*}
This map is surjective since $\mathcal{Z}_i=\textrm{GL}_n\cdot \mathcal{Z}_P
$, so we can determine the dimension of $\mathcal{Z}_i$ by computing the generic dimension of a fibre of $\varphi$.  To accomplish this, we note that the set of all pairs $(A,B)\in\mathcal{C}_n^2$ where $A$ has $n$ distinct eigenvalues is an open subset of $\mathcal{C}_n^2$; this open set intersects every $\mathcal{Z}_i$.  Assume that $\varphi(g,(A,B))=\varphi(g',(A',B'))$, where $A$ has $n$ distinct eigenvalues.  Conjugating our four red block diagonal matrices $A$, $B$, $A'$, $B'$ with an element of $\textrm{GL}_i\times \textrm{GL}_{n-i}$, we may assume without loss of generality that $A$ is a diagonal matrix, since it has $n$ distinct eigenvalues.  Since $A'$ is similar to $A$, it is also diagonalizable. Since it must already be block diagonal, we know that there exists $h\in \textrm{GL}_i\times \textrm{GL}_{n-i}$ such that $hA'h^{-1}$ is diagonal.  Now, since $A$ and $hA'h^{-1}$ are similar diagonal matrices, there exists a permutation matrix $\sigma\in\textrm{GL}_n$ with $\sigma hA'h^{-1}\sigma^{-1}=A$.  Now, the equality $\varphi(g,(A,B))=\varphi(g',(A',B'))$ is equivalent to $A'=g'^{-1}gAg^{-1}g'$ and  $B'=g'^{-1}gBg^{-1}g'$. Thus, when $g'$ is chosen, we have that $A'$ and $B'$ are uniquely determined by $A,B$, and $g$.  Note that $g'$ has to satisfy the equation $\sigma h g'^{-1}g  Ag^{-1}g'h^{-1}\sigma^{-1}=A$, i.e. that $A$ commutes with $g^{-1}g'h^{-1}\sigma^{-1}$.  As $A$ is diagonal with $n$ distinct eigenvalues, we have that $g'=g D\sigma h$ for some invertible diagonal matrix $D$.  Since the permutation matrices normalize the set of diagonal matrices, we have that $g'=g\sigma D' h$ for some invertible diagonal matrix $D'$.  Letting $\mathfrak{S}_n$ denote the group of permutation matrices, we have that $g'\in g\mathfrak{S}_n(\textrm{GL}_i\times \textrm{GL}_{n-i})$.  Thus the fibre $\varphi^{(-1)}(\varphi(g,(A,B)))$ is a subvariety of the variety that is parametrized by the product $\mathfrak{S}_n(\textrm{GL}_i\times \textrm{GL}_{n-i})$.  Since $\mathfrak{S}_n$ is finite, the dimension of each irreducible component of the fibre is at most $\dim(\textrm{GL}_i\times \textrm{GL}_{n-i})=i^2+(n-i)^2$, giving us an upper bound on dimension.  For a lower bound, consider the set
\[\{(gh,(h^{-1}Ah,h^{-1}Bh))\,|\, h\in \textrm{GL}_i\times \textrm{GL}_{n-i}\}\]
which contains $(g,(A,B))$.
This set has dimension $i^2+(n-i)^2$, and is a subset of the fibre $\varphi^{(-1)}(\varphi(g,(A,B)))$.  Thus when $A$ has $n$ distinct eigenvalues, we have that the dimension of the fibre over $(g,(A,B))$ is $i^2+(n-i)^2$.  As this result holds on the open dense set where $A$ has $n$ distinct eigenvalues, we have that $\textrm{dim}(\mathcal{Z}_i)=n^2+i^2+(n-i)^2$ by \cite[Theorem 11.12]{har}.
\end{proof}

It immediately follows that $\dim(\mathcal{C}_n^2)=n^2+1^2+(n-1)^2=2n^2-2n+2$, as this is the largest dimension of any $\mathcal{Z}_i$.  These lemmas also allow us to characterize the decomposition of $\mathcal{C}_n^2$ into irreducibles.

\begin{theorem}\label{theorem:decomposition}
Let $k$ be an algebraically closed field.
 The unique irredundant decomposition of $\mathcal{C}_n^2$ into irreducible varieties is
 \[\mathcal{C}_n^2=\overline{\mathcal{Z}_1}\cup\cdots\cup \overline{\mathcal{Z}_{\lfloor n/2\rfloor}}.\]
\end{theorem}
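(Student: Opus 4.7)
Lemmas \ref{lemma:c2n_decomposition} and \ref{lemma:z_dimensions} already establish that $\mathcal{C}_n^2=\overline{\mathcal{Z}_1}\cup\cdots\cup \overline{\mathcal{Z}_{\lfloor n/2\rfloor}}$ as a union of irreducible varieties, and the irredundant decomposition of any variety into irreducibles is unique by \cite[Theorem 4.6.4]{CLO}, so what remains is to verify irredundancy: $\overline{\mathcal{Z}_i}\not\subset \overline{\mathcal{Z}_j}$ for every $1\leq i\neq j\leq \lfloor n/2\rfloor$. Since the function $i\mapsto i^2+(n-i)^2$ is strictly decreasing on $\{1,2,\ldots,\lfloor n/2\rfloor\}$, Lemma \ref{lemma:z_dimensions} gives $\dim\overline{\mathcal{Z}_1}>\cdots>\dim\overline{\mathcal{Z}_{\lfloor n/2\rfloor}}$; this immediately handles the case $i<j$, as an irreducible variety cannot sit inside a strictly lower-dimensional irreducible variety.

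For the case $i>j$, the plan is to exhibit an explicit point of $\mathcal{Z}_i\setminus \overline{\mathcal{Z}_j}$. Choose $(A_1,B_1)\in\text{Mat}_{i\times i}^2$ and $(A_2,B_2)\in\text{Mat}_{(n-i)\times (n-i)}^2$ that each act irreducibly on their block and whose resulting modules $k^i$ and $k^{n-i}$ are non-isomorphic (automatic when $i\neq n-i$; when $i=n/2$ just arrange disjoint spectra for $A_1$ and $A_2$). Set $A=A_1\oplus A_2$ and $B=B_1\oplus B_2$; the rank-$i$ idempotent $I_i\oplus 0$ commutes with both, so $(A,B)\in\mathcal{Z}_i$. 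To verify $(A,B)\notin\overline{\mathcal{Z}_j}$, consider the incidence variety
\[\tilde U_j=\{(X,Y,V)\in\text{Mat}_{n\times n}^2\times \textrm{Gr}(j,n)\,|\,XV\subset V,\ YV\subset V\},\]
which is Zariski closed (invariance of a subspace under a linear operator is a standard closed condition on the Grassmannian). Its image $U_j$ under projection to $\text{Mat}_{n\times n}^2$ is closed by Proposition \ref{proposition:proper}, since $\textrm{Gr}(j,n)$ is projective. Every element of $\mathcal{Z}_j$ lies in $U_j$ (take $V$ to be the image of the witnessing idempotent), so $\overline{\mathcal{Z}_j}\subset U_j$, and it suffices to show that $A,B$ have no common invariant subspace of dimension $j$.

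A short case analysis using the projection $\pi_1\colon k^n\to k^i$ and the intersection $V\cap (0\oplus k^{n-i})$ shows that any common $\{A,B\}$-invariant $V$ either splits as $V_1\oplus V_2$ with each $V_s$ a common $(A_s,B_s)$-invariant (forced to be $\{0\}$ or the full summand by irreducibility), or arises as the graph of an intertwiner $k^i\to k^{n-i}$ (forced to be zero by Schur's lemma together with non-isomorphism of the two summands). Hence the common invariant dimensions form exactly the set $\{0,i,n-i,n\}$, and the bounds $1\leq j<i\leq \lfloor n/2\rfloor$ imply $j\notin\{0,i,n-i,n\}$ (using $j<i\leq n-i$ and $1\leq j\leq \lfloor n/2\rfloor<n$), so $(A,B)\notin U_j$ as required. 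The main obstacle in this plan is the closedness of $U_j$: ``admits a common $j$-dimensional invariant subspace'' is a priori only a constructible condition, and the projective Grassmannian together with Proposition \ref{proposition:proper} is precisely what upgrades it to Zariski-closed, after which the module-theoretic calculation is routine.
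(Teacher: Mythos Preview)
Your argument is correct and follows essentially the same route as the paper: both use the dimension formula of Lemma \ref{lemma:z_dimensions} for one direction of non-inclusion, and for the other direction both pass to a Zariski-closed ``has a common invariant subspace'' locus built via the Grassmannian and Proposition \ref{proposition:proper}, then exhibit a pair outside it. Two small differences worth noting: the paper's auxiliary set $\mathcal{W}_i$ records \emph{two} common invariant subspaces (of dimensions $i$ and $n-i$) rather than your single $j$-dimensional one, and the paper simply asserts the existence of a separating pair, whereas you construct one explicitly via irreducible blocks and Schur's lemma---a cleaner and more informative choice.
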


\begin{proof}

By Lemmas \ref{lemma:c2n_decomposition} and \ref{lemma:z_dimensions}, all we need to show is that no $\overline{\mathcal{Z}_i}$ is contained in any $\overline{\mathcal{Z}_j}$ for $i\neq j$. For each $i=1,\ldots,\lfloor\frac{n}{2}\rfloor$, define
\begin{align*}
\mathcal{W}_i\,=\,& \left\{  (A,B)\in\textrm{Mat}_{n\times n}^2\,|\, \exists \textrm{ subspaces }U,V\subset k^n \textrm{ s.t. } \dim(U)=i,\dim(V)=n-i,\right.
\\& \left.AU\subset U, AV \subset V, BU\subset U, BV\subset V \right\}.
\end{align*}
We have that $\mathcal{Z}_i$ is contained in $\mathcal{W}_i$ for each $i$.  Moreover, the set $\mathcal{W}_i$ is the image of the projection of the set
\begin{align*}
\mathcal{W}_i'\,=\,& \left\{  (A,B,U,V)\in\textrm{Mat}_{n\times n}\times \textrm{Mat}_{n\times n}\times\textrm{Gr}(n,i)\times \textrm{Gr}(n,n-i) \,|\, \right.
\\& \left.AU\subset U, AV \subset V, BU\subset U, BV\subset V \right\}
\end{align*}
to the first two factors, where $\textrm{Gr}$ denotes the Grassmannian. Recall that the Grassmannian $\textrm{Gr}(n,j)$ is the set of all $j$-dimensional subspaces of an $n$-dimensional vector space. It has a natural structure of a projective variety given by Pl\"ucker embedding $\textrm{Gr}(n,j)\to \mathbb{P}(\wedge^j(k^n))=\mathbb{P}^{{n\choose j}-1}$ that sends a subspace $U\in \textrm{Gr}(n,j)$ with a basis $\{u_1,\ldots ,u_j\}$ to $u_1\wedge \cdots \wedge u_j\in \mathbb{P}(\wedge^j(k^n))$, see \cite[Chapter 6]{har}. Note that this map is well-defined, as the wedge-product $u_1\wedge \cdots \wedge u_j$ is independent of the chosen basis.  If $U\in \textrm{Gr}(n,i)$ is represented in Pl\"{u}cker coordinates by $u_1\wedge \ldots\wedge u_i$, then $AU\subset U$ if and only if $(A+sI)u_1\wedge\ldots \wedge (A+sI)u_i$ is a multiple of $u_1\wedge \ldots\wedge u_i$ for each $s\in k$ \cite[Theorem 2.2]{tsat}, which is equivalent to vanishing of all $2\times 2$ minors of the $2\times {n\choose i}$ matrix
$\begin{pmatrix}
(A+sI)u_1\wedge\ldots \wedge (A+sI)u_i\\
u_1\wedge \ldots\wedge u_i
\end{pmatrix}$ for all $s\in k$.  This is a polynomial condition which is homogeneous in the $u$-coordinates, so $\mathcal{W}_i'$ is a  closed subset of $k^{2n^2}\times \mathbb{P}^{{n\choose i}-1}\times \mathbb{P}^{{n\choose n-i}-1}$. By Proposition \ref{proposition:proper} its projection onto the first two factors, which is $\mathcal{W}_i$, is closed, and $\overline{\mathcal{Z}_i}\subset \mathcal{W}_i$.  Now, for $1\leq i<j\leq \lfloor \frac{n}{2}\rfloor$ there exists a pair of matrices $(A,B)\in \mathcal{Z}_j$ that do not have an $(n-i)$-dimensional common invariant subspace, implying $\mathcal{Z}_j\not\subset \overline{\mathcal{Z}_i}$.  On the other hand, $\mathcal{Z}_i$ cannot be a subset of $\overline{\mathcal{Z}_j}$ since $\dim\mathcal{Z}_i>\dim \mathcal{Z}_j$.  This completes the proof.
\end{proof}

One consequence of this theorem is that $\mathcal{C}^2_n$ is an irreducible variety if and only if $n\leq 3$:  for $n=2$ we have $\mathcal{C}^2_2=\mathcal{C}^1_2$ and for $n=3$ we have $\mathcal{C}^2_3=\overline{\mathcal{Z}_1}$; but for $n\geq 4$ the variety $\mathcal{C}^2_n$ has at least two distinct irreducible components, namely $\overline{\mathcal{Z}_1}$ and $\overline{\mathcal{Z}_2}$.

\section{The distance-$3$ commuting set}
\label{section:distance3}

To study pairs of matrices with commuting distance at most $3$, we first define the notion of polynomially commuting matrices.
  
\begin{definition}
Fix $n\geq 3$, and  let $A$ and $B$ be two matrices in $\text{Mat}_{n\times n}$. Then	
$A$ and $B$ \emph{polynomially commute} if there exist polynomials $p,q\in k[x]$ such that
\begin{itemize}
\item[(i)] $p(A)\leftrightarrow q(B)$, and
\item[(ii)] $1\leq \deg(p),\deg(q)\leq n-1$.
\end{itemize} 
\end{definition}
Note that without the degree bounds on $p$ and $q$, every pair of matrices would polynomially commute with one another:  if $\deg(p)=0$, then $p(A)$ would be a scalar matrix; and if $\deg(p)=n$ we could choose $p$ equal to the characteristic polynomial of $A$, so that $p(A)$ is the zero matrix.  In either case, we would have $p(A)$ commuting with every matrix.  However, unlike with our definition of commuting distance, it is not forbidden for $p(A)$ or $q(B)$ to be a scalar matrix.  This will come into play in Lemma \ref{lemma:derogatory}.

We also remark that we may assume that $p(x)$ and $q(x)$ have no constant term.  This is because for any constants $\lambda$ and $\mu$, two matrices $C$ and $D$ commute if and only if $C-\lambda I$ and $D-\mu I$ commute.  Eliminating the constant term from $p$ and $q$ only effects $p(A)$ and $p(B)$ by subtracting off a constant multiple of the identity matrix.

Let $\mathcal{PC}_n\subset \text{Mat}_{n\times n}^2$ denote the set of all pairs of polynomially commuting matrices.  To prove that $\mathcal{C}^{3}_n$ is an affine variety over an algebraically closed
field, we will first show that $\mathcal{C}^{3}_n=\mathcal{PC}_n$.  We will then show that $\mathcal{PC}_n$ is an affine variety, at least over an algebraically closed field.  We begin with the following result.

\begin{lemma}\label{lemma:derogatory} Let $A,B\in \text{Mat}_{n\times n}$ such that at least one of $A$ and $B$ is derogatory.  Then $(A,B)\in\mathcal{PC}_n$.
\end{lemma}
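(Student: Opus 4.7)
The plan is to use the minimal polynomial directly. Recall that $A$ is derogatory precisely when its minimal polynomial $m_A$ has degree strictly less than $n$. Since $m_A$ annihilates $A$ but is nonzero and monic, we always have $\deg(m_A)\geq 1$. So if $A$ is derogatory, then $m_A$ satisfies $1\leq \deg(m_A)\leq n-1$ automatically.

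Without loss of generality, assume $A$ is derogatory (the case where $B$ is derogatory is entirely symmetric). I would then set $p(x)=m_A(x)$ and $q(x)=x$. Condition (ii) of the definition of polynomial commutation holds trivially: $\deg(q)=1$ and $1\leq \deg(p)\leq n-1$ by the observation above (note $n\geq 3$, so these degree bounds are nonvacuous). For condition (i), $p(A)=m_A(A)=0$ is the zero matrix, which commutes with every matrix in $\text{Mat}_{n\times n}$, and in particular with $q(B)=B$.

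Thus $(A,B)\in\mathcal{PC}_n$, as required. The main thing to emphasize is that the definition of polynomial commutation does \emph{not} forbid $p(A)$ or $q(B)$ from being scalar (as noted in the paragraph preceding the lemma, this is a deliberate departure from the commuting distance setup); this is exactly what lets the minimal polynomial trick succeed. There is no real obstacle here, only the small verification that the degree of the minimal polynomial of a derogatory matrix falls in the allowed range $[1,n-1]$.
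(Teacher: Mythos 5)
Your proposal is correct and matches the paper's proof essentially verbatim: both take $p$ to be the minimal polynomial of $A$ (whose degree lies in $[1,n-1]$ precisely because $A$ is derogatory), take $q(x)=x$, and use the fact that $p(A)=0$ commutes with everything. No issues.
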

\begin{proof}  Assume that $A$ is derogatory; a symmetric argument will hold when $B$ is derogatory. Let $p\in k[x]$ be the minimal polynomial of $A$. Then by the definition of derogatory matrices, $p(x)$ is not equal to the characteristic polynomial of $A$, which has degree $n$.  Since the minimal polynomial divides the characteristic polynomial, we have $\deg(p)<n$.  Combined with the fact that minimal polynomial of any matrix is non-constant, this implies that $1\leq \deg(p)\leq n-1$.  Now let $q(x)=x$.  We then have
$$A\leftrightarrow p(A)\leftrightarrow q(B)\leftrightarrow B,$$
since the zero matrix $p(A)$ commutes with all other matrices and since $B$ commutes with itself.  Thus $A$ and $B$ polynomially commute.
\end{proof}

\begin{proposition}\label{prop:distance3pc} Suppose $k$ is algebraically closed.  Then the distance-$3$ commuting set is equal to the set of polynomially commuting pairs of matrices:
$$\mathcal{C}^{3}_n=\mathcal{PC}_n.$$
\end{proposition}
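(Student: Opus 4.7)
The plan is to prove equality by showing both containments, with the characterization of distance-$4$ pairs (Theorem \ref{theorem:equivalences}) and Lemma \ref{lemma:derogatory} as the main tools. The guiding idea is that a commuting chain $A \leftrightarrow C \leftrightarrow D \leftrightarrow B$ should, when both $A$ and $B$ are non-derogatory, force $C \in k[A]$ and $D \in k[B]$, converting the chain into a polynomial commutation relation; and conversely, a polynomial commutation $p(A) \leftrightarrow q(B)$ should directly yield such a chain, unless the intermediate matrices collapse to scalars, in which case derogatoriness intervenes.

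For the inclusion $\mathcal{PC}_n \subseteq \mathcal{C}^{3}_n$, I would take $(A,B) \in \mathcal{PC}_n$ with witnessing polynomials $p, q$ of degree between $1$ and $n-1$. The favorable case is when both $p(A)$ and $q(B)$ are non-scalar, for then
\[
A \leftrightarrow p(A) \leftrightarrow q(B) \leftrightarrow B
\]
is a legitimate walk in $\Gamma(\text{Mat}_{n\times n})$ and $d(A,B) \leq 3$ follows. If instead $p(A) = \lambda I$ is scalar, then $A$ satisfies the polynomial $p(x) - \lambda$ of degree at most $n-1$, so its minimal polynomial has degree less than $n$ and $A$ is derogatory; Theorem \ref{theorem:equivalences} then guarantees $d(A, B) \leq 3$ for every $B$. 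The case when $q(B)$ is scalar is symmetric, and the case where $A$ or $B$ is itself scalar is immediate from the convention $d(A,B) \leq 1$.

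For the reverse inclusion $\mathcal{C}^{3}_n \subseteq \mathcal{PC}_n$, I would first dispose of the case where $A$ or $B$ is derogatory by invoking Lemma \ref{lemma:derogatory}. It remains to handle pairs with $d(A,B) \leq 3$ where both $A$ and $B$ are non-derogatory, hence in particular non-scalar. When $d(A,B) \leq 1$ the polynomials $p(x) = q(x) = x$ work. Otherwise, I pick a shortest commuting chain realizing $d(A,B)$; its intermediate non-scalar vertices $C \leftrightarrow A$ and $D \leftrightarrow B$ (with possibly $C = D$ when $d = 2$) satisfy $C \leftrightarrow D$. Since $A$ is non-derogatory, Theorem \ref{theorem:equivalences}(ii) gives $C \in k[A]$, so $C = p(A)$ for some $p \in k[x]$; reducing modulo the minimal polynomial of $A$, which has degree $n$, I may assume $\deg(p) \leq n-1$, and the assumption that $C$ is non-scalar forces $\deg(p) \geq 1$. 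The symmetric argument yields $q \in k[x]$ with $1 \leq \deg(q) \leq n-1$ and $D = q(B)$. Then $p(A) \leftrightarrow q(B)$, proving $(A,B) \in \mathcal{PC}_n$.

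The main conceptual obstacle is the asymmetry introduced in the forward direction by the possibility that $p(A)$ or $q(B)$ is scalar: the tempting length-$3$ walk passes through a forbidden vertex of $\Gamma(\text{Mat}_{n\times n})$, so one must argue indirectly through derogatoriness and the universal distance bound of Theorem \ref{theorem:equivalences}. This is also the place where algebraic closure is used essentially, both to guarantee Theorem \ref{theorem:equivalences}(ii) (the centralizer of a non-derogatory matrix equals $k[A]$) and to realize the maximal distance $4$ in the commuting graph.
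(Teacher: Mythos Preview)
Your proof is correct and follows essentially the same approach as the paper: both arguments hinge on Theorem \ref{theorem:equivalences} and Lemma \ref{lemma:derogatory}, splitting into the derogatory and non-derogatory cases. The only difference is organizational---the paper case-splits on derogatoriness first and handles both inclusions simultaneously within each case, whereas you prove the two inclusions separately and case-split within each---but the substance is identical.

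One minor remark on your closing commentary: the equivalence (i)$\Leftrightarrow$(ii) in Theorem \ref{theorem:equivalences} (that the centralizer of a non-derogatory matrix is $k[A]$) actually holds over any field; the place where algebraic closure genuinely enters is the equivalence with (iii), which is what you use in the forward direction to conclude $d(A,B)\le 3$ whenever $A$ is derogatory.
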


\begin{proof}  Let $(A,B)\in \text{Mat}_{n\times n}^2$.  We claim that $(A,B)\in\mathcal{C}^{3}_n$ if and only if $(A,B)\in \mathcal{PC}_n$.

First suppose at least one of $A$ and $B$ is derogatory.  Then by 
Theorem \ref{theorem:equivalences}, 
we have $(A,B)\in\mathcal{C}^{3}_n$,  and by Lemma \ref{lemma:derogatory}, we have $(A,B)\in \mathcal{PC}_n$.  Thus our claim holds in this case.

Now suppose $A$ and $B$ are both non-derogatory.  By Theorem \ref{theorem:equivalences}, $A$   commutes with polynomials in $A$ only,  and $B$  commutes with polynomials in $B$ only.  Now, $(A,B)\in \mathcal{C}^{3}_n$ if and only if there exist non-scalar matrices $C$ and $D$ such that
$$A\leftrightarrow C\leftrightarrow D\leftrightarrow B. $$
Since $A$ and $B$  commute with polynomials in themselves only, such $C$ and $D$ exist if and only if there are polynomials $p(x),q(x)\in k[x]$ such that $C=p(A)$ and $D=q(B)$. In fact, $p$ and $q$ must have degree at least $1$, since $C$ and $D$ are non-scalar.  Moreover, $p$ may be chosen to have degree at most $n-1$, since the characteristic polynomial is annihilating, so any power $A^k$ with $k\geq n$ 
can be written as a combination of $I, A, A^2,\ldots,A^{n-1}$. A similar argument shows  we may take $\deg(q)\leq n-1$.  Thus, the desired matrices $C$ and $D$ exist if and only if $(A,B)\in \mathcal{PC}_n$.  This completes our proof.
\end{proof}

To show that $\mathcal{C}_n^{3}$ is an affine variety in the case of algebraically closed field, it remains to show the following proposition.

\begin{proposition}\label{prop:pcaffine}  Suppose $k$ is algebraically closed.  Then the set $\mathcal{PC}_n$ is an affine variety.
\end{proposition}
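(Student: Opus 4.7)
The plan is to realize $\mathcal{PC}_n$ as the image, under projection away from two projective factors, of a Zariski-closed incidence variety, and then to invoke Proposition \ref{proposition:proper} to conclude that this image is Zariski-closed.

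First I would parametrize the admissible polynomials $p$ and $q$. As noted immediately after the definition, we may assume that both $p$ and $q$ have zero constant term, so we write $p(x)=\sum_{i=1}^{n-1} p_i x^{i}$ and $q(x)=\sum_{j=1}^{n-1} q_j x^{j}$. The condition $1\leq \deg(p),\deg(q)\leq n-1$ then says precisely that the coefficient tuples $(p_1,\ldots,p_{n-1})$ and $(q_1,\ldots,q_{n-1})$ are nonzero, i.e.\ they define points $[p]\in \mathbb{P}^{n-2}_k$ and $[q]\in \mathbb{P}^{n-2}_k$. Moreover, the commutator
\[
[p(A),q(B)] \;=\; \sum_{i=1}^{n-1}\sum_{j=1}^{n-1} p_i\,q_j\,\bigl(A^i B^j - B^j A^i\bigr)
\]
is bilinear in $(p_i)$ and $(q_j)$, so each of its $n^2$ entries is bihomogeneous of bidegree $(1,1)$ in the $[p]$ and $[q]$ coordinates, and polynomial in the entries of $A$ and $B$. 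In particular the vanishing condition $p(A)q(B)=q(B)p(A)$ is well-defined on $\text{Mat}_{n\times n}^2\times \mathbb{P}^{n-2}_k\times \mathbb{P}^{n-2}_k$.

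Next I would define the incidence variety
\[
\mathcal{I} \;=\; \bigl\{\, (A,B,[p],[q]) \,\bigm|\, p(A)q(B)=q(B)p(A)\,\bigr\} \;\subset\; \text{Mat}_{n\times n}^2 \times \mathbb{P}^{n-2}_k \times \mathbb{P}^{n-2}_k.
\]
By the preceding paragraph, $\mathcal{I}$ is the common zero locus of finitely many polynomials that are homogeneous in each set of projective coordinates, hence Zariski-closed in this mixed affine-projective space. Let
\[
\pi\colon \text{Mat}_{n\times n}^2 \times \mathbb{P}^{n-2}_k \times \mathbb{P}^{n-2}_k \;\longrightarrow\; \text{Mat}_{n\times n}^2
\]
be the projection onto the affine factor. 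By the generalisation of Proposition \ref{proposition:proper} to multiple projective factors (already noted in the paper), $\pi$ sends Zariski-closed sets to Zariski-closed sets, so $\pi(\mathcal{I})$ is an affine variety in $\text{Mat}_{n\times n}^2$.

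Finally I would verify $\pi(\mathcal{I})=\mathcal{PC}_n$. A pair $(A,B)$ lies in $\pi(\mathcal{I})$ if and only if there exist nonzero tuples $(p_i),(q_j)\in k^{n-1}$ with $p(A)$ and $q(B)$ commuting, where $p$ and $q$ are the corresponding polynomials without constant term; by the reduction at the start, this is equivalent to membership in $\mathcal{PC}_n$. The one place demanding some care is the polite but essential reminder that we really may restrict to polynomials with no constant term, which works because adding a constant multiple of $I$ to $p(A)$ or $q(B)$ does not alter the commutator; this is the only subtlety in the argument. The rest of the work is the standard (but crucial) invocation of properness of projective morphisms over an algebraically closed field, which already fails over $\mathbb{R}$ as illustrated by the example in Section~\ref{section:background}.
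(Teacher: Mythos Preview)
Your argument is correct and essentially identical to the paper's own proof: both parametrize constant-free polynomials of degree $\leq n-1$ by points of $\mathbb{P}^{n-2}$ (the paper writes this as $\mathbb{P}(V)$), set up the same bihomogeneous incidence variety cut out by the entries of $p(A)q(B)-q(B)p(A)$, and then apply Proposition~\ref{proposition:proper} to the projection onto $\text{Mat}_{n\times n}^2$.
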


\begin{proof} Suppose $A$ and $B$ polynomially commute, and let $p$ and $q$ be polynomials satisfying all the conditions of our definition.  As previously noted, we may assume that $p$ and $q$ have no constant term.  Write
$$p(x)=\sum_{i=1}^{n-1}c_ix^i,\,\,\,\,q(x)=\sum_{i=1}^{n-1}d_ix^i.$$
The condition $p(A)q(B)-q(B)p(A)=0$ can then be written as
\begin{align*}
0=&\left(\sum_{i=1}^{n-1}c_iA^i\right)\left(\sum_{i=1}^{n-1}d_iB^i\right)-\left(\sum_{i=1}^{n-1}d_iB^i\right)\left(\sum_{i=1}^{n-1}c_iA^i\right)
\\=&\left(\sum_{i=1}^{n-1}\sum_{j=1}^{n-1}c_id_jA^iB^j\right) -\left(\sum_{i=1}^{n-1}\sum_{j=1}^{n-1}c_id_jB^jA^i\right)
\\=&\sum_{i=1}^{n-1}\sum_{j=1}^{n-1} c_id_j\left(A^iB^j-B^jA^i\right).
\end{align*}
Thinking of the $c_i$, the $d_j$, the entries of $A$, and the entries of $B$ as variables, this equation really consists of $n^2$ polynomials (one for each coordinate) in those variables. These equations are bilinear in the $c_i$ and the $d_j$ variables.

Note that the set of polynomials with no constant term and degree between $1$ and $n-1$ is a vector space.  Call this vector space $V$, and let $\mathbb{P}(V)$ be the projectivization of $V$, so that two polynomials are identified if and only if they differ by a constant multiple. The equivalence class of a polynomial $p\in V$ in $\mathbb{P}(V)$ will be denoted by $\langle p\rangle$. Consider the incidence correspondence
$$\Phi = \{(A,B,\langle p\rangle,\langle q\rangle\,|\, p(A)q(B)=q(B)p(A)\}\subset \text{Mat}_{n\times n}^2\times \mathbb{P}(V)\times\mathbb{P}(V).$$
Note that $\Phi$ is Zariski closed: it is defined by the equations we derived from $p(A)q(B)-q(B)p(A)=0$, which are indeed bilinear in the coordinates defining $p$ and $q$.  Consider the projection map 
$$\pi:\text{Mat}_{n\times n}^2\times \mathbb{P}(V)\times\mathbb{P}(V)\rightarrow \text{Mat}_{n\times n}^2.$$
By Proposition \ref{proposition:proper}, $\pi(Z)\subset \text{Mat}_{n\times n}^2$ is Zariski-closed whenever whenever $Z$ is.  Thus, the image $\pi(\Phi)=\mathcal{PC}_n$ is Zariski-closed in $\text{Mat}_{n\times n}^2$.  We conclude that $\mathcal{PC}_n$ is an affine variety.
\end{proof}

Combining Propositions \ref{prop:distance3pc} and \ref{prop:pcaffine}, we have the following result, which completes the proof of Theorem \ref{theorem:main}.

\begin{proposition}\label{theorem:distance3} Over an algebraically closed field, the distance-$3$ commuting set $\mathcal{C}_n^3$ is an affine variety.
\end{proposition}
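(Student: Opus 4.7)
The plan is simply to assemble the two immediately preceding propositions. First I would invoke Proposition \ref{prop:distance3pc} to pass from the geometric distance condition to the algebraic polynomial-commuting condition, obtaining the set-theoretic equality $\mathcal{C}_n^3 = \mathcal{PC}_n$. Then I would cite Proposition \ref{prop:pcaffine} to conclude that $\mathcal{PC}_n$ is Zariski-closed in $\text{Mat}_{n\times n}^2$. Transitivity of equality then yields that $\mathcal{C}_n^3$ is cut out by polynomial equations in $\text{Mat}_{n\times n}^2$, so it is an affine variety; together with the $d \leq 2$ and $d \geq 4$ cases already handled in the introduction, this also finishes the proof of Theorem \ref{theorem:main}.

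There is no further obstacle at this stage, since both hard pieces of work have been done in the preparatory results. The conceptual heart of the argument lives inside those lemmas: algebraic closure entered via the non-derogatory characterization of Theorem \ref{theorem:equivalences}, which forces the two intermediate non-scalar matrices in any length-three commuting chain from a pair of non-derogatory $A, B$ to be polynomial evaluations $p(A)$ and $q(B)$, with the degree bound $1 \leq \deg(p), \deg(q) \leq n - 1$ coming from Cayley--Hamilton; and the properness of projection away from projective space (Proposition \ref{proposition:proper}) was used to eliminate the existential quantifier on $(p, q)$, realizing $\mathcal{PC}_n$ as the Zariski-closed image of an incidence correspondence in $\text{Mat}_{n\times n}^2 \times \mathbb{P}(V) \times \mathbb{P}(V)$. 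What remains for the present proposition is purely formal bookkeeping, so a one-sentence proof citing the two results suffices.
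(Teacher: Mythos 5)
Your proposal is correct and matches the paper's own argument exactly: the paper states this proposition as an immediate consequence of Propositions \ref{prop:distance3pc} and \ref{prop:pcaffine}, which is precisely the one-line combination you give. Your summary of where the real work happens (the non-derogatory characterization plus properness of projection away from $\mathbb{P}(V)\times\mathbb{P}(V)$) is also accurate.
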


\begin{remark}
We briefly mention another method of proof for Proposition \ref{theorem:distance3} with the additional assumption that the characteristic of $k$ does not divide $n$, suggested to the authors by Eyal Markman.  Consider all quadruples $(A,B,C,D)$ with $A\leftrightarrow B\leftrightarrow C\leftrightarrow D$, where $B$ and $C$ are nonscalar.  Without loss of generality, we may assume that $B$ and $C$ have trace zero, since adding a scalar matrix to each does not affect our assumptions.  The set of all trace zero nonscalar matrices can be identified with an $n^2-2$ dimensional projective space, and so we may think of our set of quadruples as a Zariski-closed subset of $\textrm{Mat}_{n\times n}\times \mathbb{P}^{n^2-2}\times \mathbb{P}^{n^2-2}\times \textrm{Mat}_{n\times n}$.  Projecting away from the projective spaces yields $\mathcal{C}_n^3$, which must therefore be Zariski-closed; here we use the fact that no nonzero scalar matrix has trace $0$ since the characteristic of $k$ does not divide $n$.
\end{remark}

The fact that the distance-$d$ commuting sets over algebraically closed fields are all varieties allows us to deduce the following fact.

\begin{corollary}\label{theorem:measurezero} If $k=\mathbb{C}$ or $k=\mathbb{R}$, then
$$\{(A,B)\,|\, d(A,B)\leq 3\}\subset\text{Mat}_{n\times n}(k)^2$$ is a set of
measure $0$.
\end{corollary}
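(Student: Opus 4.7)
The plan is to reduce both cases to the standard fact that the zero set of a nonzero real polynomial in $\mathbb{R}^N$ has Lebesgue measure zero.

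First I would handle $k=\mathbb{C}$. By Proposition \ref{theorem:distance3}, $\mathcal{C}_n^3(\mathbb{C})$ is an affine variety in $\text{Mat}_{n\times n}^2\cong\mathbb{C}^{2n^2}$, so the only nontrivial point is that it is a \emph{proper} subvariety. For $n\geq 3$ this follows from the result $\text{diam}(\Gamma(\text{Mat}_{n\times n}(\mathbb{C})))=4$ quoted just before Theorem \ref{theorem:equivalences}, combined with the explicit remark there that an elementary Jordan matrix sits at distance exactly $4$ from its transpose. For $n=2$, Proposition \ref{prop:n=2} forces $\mathcal{C}_2^3=\mathcal{C}_2^1$, which is not all of $\text{Mat}_{2\times 2}^2$ since some pairs of $2\times 2$ matrices fail to commute. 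Hence there is a nonzero complex polynomial vanishing on $\mathcal{C}_n^3(\mathbb{C})$; decomposing it into real and imaginary parts yields a nonzero real polynomial in $4n^2$ real variables whose zero set contains $\mathcal{C}_n^3(\mathbb{C})$, and standard Fubini-type arguments give that this zero set has Lebesgue measure zero in $\mathbb{R}^{4n^2}\cong\mathbb{C}^{2n^2}$.

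For $k=\mathbb{R}$ the set need not itself be a variety by Proposition \ref{prop:c34r}, so I would instead exploit the inclusion
\[\mathcal{C}_n^3(\mathbb{R})\;\subset\;\mathcal{C}_n^3(\mathbb{C})\cap \text{Mat}_{n\times n}(\mathbb{R})^2,\]
which holds because any real commuting chain is automatically a complex commuting chain and any non-scalar real matrix is non-scalar as a complex matrix. Taking a nonzero complex polynomial $f$ vanishing on $\mathcal{C}_n^3(\mathbb{C})$ (provided by the first case) and writing $f=g+ih$ with $g,h\in\mathbb{R}[x_1,\ldots,x_{2n^2}]$, at least one of $g,h$ is a nonzero real polynomial, and $\mathcal{C}_n^3(\mathbb{R})$ is contained in its real vanishing locus, which has Lebesgue measure zero in $\mathbb{R}^{2n^2}$.

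The genuinely nontrivial content is already absorbed into Proposition \ref{theorem:distance3} and into the fact, cited in the paper, that the diameter equals $4$ rather than being bounded by $3$. I do not anticipate a real obstacle beyond this bookkeeping; the only care required is the translation between complex and real polynomial vanishing loci in the $k=\mathbb{R}$ case, handled cleanly by separating real and imaginary parts.
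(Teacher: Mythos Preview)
Your proof is correct and follows essentially the same route as the paper: over $\mathbb{C}$ use Proposition~\ref{theorem:distance3} plus the existence of a distance-$4$ pair to get a proper subvariety, and over $\mathbb{R}$ use the containment $\mathcal{C}_n^3(\mathbb{R})\subset \mathcal{C}_n^3(\mathbb{C})\cap\text{Mat}_{n\times n}(\mathbb{R})^2$. The only cosmetic difference is that the paper packages the passage from complex to real vanishing loci via Lemma~\ref{lemma:realclosed}, whereas you do the equivalent step by hand, splitting a single defining polynomial into real and imaginary parts.
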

 Phrased contrapositively, a ``random'' pair of matrices $(A,B)$  satisfies $d(A,B)=4$.
\begin{proof}
For $k=\mathbb{C}$, this follows from the fact that any proper subvariety of affine space over $\mathbb{C}$ or $\mathbb{R}$ has measure $0$, and the fact that at least one pair of matrices has $d(A,B)=4$. 

For $k=\mathbb{R}$, we note that $\mathcal{C}_n^3(\mathbb{R})\subset \mathcal{C}_n^3(\mathbb{C}) \cap\textrm{Mat}_{n\times n}\left(\mathbb{R}\right)^2$: this is because if there is a commuting chain over $\mathbb{R}$ connecting $A$ and $B$, the same commuting chain exists over $\mathbb{C}$.  By Lemma \ref{lemma:realclosed}, we know that $\mathcal{C}_n^3(\mathbb{C}) \cap\textrm{Mat}_{n\times n}\left(\mathbb{R}\right)^2$ is a variety since $\mathbb{C}$ is algebraic over $\mathbb{R}$; it is in fact a proper variety by the usual examples of distance-$4$ pairs of matrices.  Thus $\mathcal{C}_n^3(\mathbb{C}) \cap\textrm{Mat}_{n\times n}\left(\mathbb{R}\right)^2$ has measure $0$ in $\textrm{Mat}_{n\times n}\left(\mathbb{R}\right)^2$, and so too does its subset $\mathcal{C}_n^3(\mathbb{R})$.
\end{proof}

Just as with $\mathcal{C}_n^2$, we have an algebro-geometric construction of $\mathcal{C}^3_n$ over $\mathbb{C}$.  
\begin{proposition} Let $k={\mathbb C}$, $I$ be the ideal generated by the $3n^2$ polynomials $(XZ-ZX)_{ij}$, $(ZW-WZ)_{ij}$, and $(YW-WY)_{ij}$, $J_1$ be  the ideal generated by the polynomials $z_{11}-z_{ii}$ (for $2\leq i\leq n$) and $z_{ij}$ (for $i\neq j$), and let $J_2$ be generated by the polynomials  $w_{11}-w_{ii}$ (for $2\leq i\leq n$) and $w_{ij}$ (for $i\neq j$). Let $J=J_1\cap J_2$ be the intersection of the two ideals. Then 
	$$\mathcal{C}_n^3=\textbf{V}((I:J^\infty)\cap k[X,Y]).$$
\end{proposition}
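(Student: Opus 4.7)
My plan is to combine the ideal-theoretic dictionary from Propositions \ref{clo:projection} and \ref{clo:difference} with the fact, already established in Proposition \ref{theorem:distance3}, that $\mathcal{C}_n^3$ is Zariski closed over $\mathbb{C}$. Set $\mathcal{T}=\textbf{V}(I)\subset \text{Mat}_{n\times n}^4$; by construction the generators of $I$ cut out precisely the quadruples $(A,Z,W,B)$ with $A\leftrightarrow Z\leftrightarrow W\leftrightarrow B$. Since $J=J_1\cap J_2$, we have $\textbf{V}(J)=\textbf{V}(J_1)\cup\textbf{V}(J_2)$, the locus of quadruples where $Z$ or $W$ is scalar. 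Let $\mathcal{R}=\mathcal{T}\setminus\textbf{V}(J)$ be the set of commuting chains with both middle matrices non-scalar, and let $\pi:\text{Mat}_{n\times n}^4\to\text{Mat}_{n\times n}^2$ denote projection onto the $(A,B)$-coordinates.

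The first substantive step is to verify $\pi(\mathcal{R})=\mathcal{C}_n^3$. The containment $\pi(\mathcal{R})\subset\mathcal{C}_n^3$ is immediate from the definition of commuting distance. For the reverse inclusion, given $(A,B)\in\mathcal{C}_n^3$ I would case-split on $d(A,B)$: when $d(A,B)=3$ the required $Z$ and $W$ come directly from a defining commuting chain $A\leftrightarrow C_1\leftrightarrow C_2\leftrightarrow B$, and when $d(A,B)\leq 2$ one can pad an appropriate shorter chain by repeating a single non-scalar matrix (using a witness $C$ when $d(A,B)=2$, using $A$ or $B$ itself when one is non-scalar and commutes with the other, and, in the edge cases where $A$ or $B$ is scalar, using the fact that scalars commute with every matrix and that non-scalar $n\times n$ matrices exist for $n\geq 3$).

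Having established $\pi(\mathcal{R})=\mathcal{C}_n^3$, Propositions \ref{clo:difference} and \ref{clo:projection} together give
\[ \textbf{V}((I:J^\infty)\cap k[X,Y]) \;=\; \overline{\pi(\overline{\mathcal{R}})}, \]
so the proof reduces to showing $\overline{\pi(\overline{\mathcal{R}})}=\mathcal{C}_n^3$. One containment follows from $\mathcal{C}_n^3=\pi(\mathcal{R})\subset \pi(\overline{\mathcal{R}})\subset \overline{\pi(\overline{\mathcal{R}})}$. For the other, I would copy the strategy of Proposition \ref{proposition:nozariski}: the set $\mathcal{R}$ is constructible, so by Proposition \ref{proposition:closure} its Zariski and Euclidean closures coincide. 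Hence any $(A,Z,W,B)\in\overline{\mathcal{R}}$ is a Euclidean limit of points $(A_i,Z_i,W_i,B_i)\in\mathcal{R}$, and projecting gives $(A_i,B_i)\in\mathcal{C}_n^3$ with $(A_i,B_i)\to (A,B)$. By Proposition \ref{theorem:distance3}, $\mathcal{C}_n^3$ is a variety over $\mathbb{C}$ and hence Euclidean closed, forcing $(A,B)\in\mathcal{C}_n^3$. This yields $\pi(\overline{\mathcal{R}})\subset\mathcal{C}_n^3$ and therefore $\overline{\pi(\overline{\mathcal{R}})}\subset\mathcal{C}_n^3$, completing the argument.

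The main obstacle is the final step of simultaneously stripping both Zariski closures, since the ideal operations produce only the double closure a priori. This is precisely where Proposition \ref{theorem:distance3} is indispensable: without it we could not guarantee that $\mathcal{C}_n^3$ is even closed, and the Euclidean-topology detour would not conclude. Everything else in the proof is routine translation between the ideals $I$, $J$, $I:J^\infty$, and $(I:J^\infty)\cap k[X,Y]$ and the corresponding geometric operations on $\mathcal{T}$, $\textbf{V}(J)$, $\mathcal{R}$, and its projection.
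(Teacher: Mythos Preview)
Your proposal is correct and follows essentially the same approach as the paper's proof: identify $\textbf{V}(I)$ and $\textbf{V}(J)$ with the obvious commuting-chain locus and the scalar locus, show that projecting their difference gives $\mathcal{C}_n^3$, translate the closures into ideal operations via Propositions \ref{clo:projection} and \ref{clo:difference}, and then strip the closures using the constructible--Euclidean argument of Proposition \ref{proposition:nozariski} together with the already-proved closedness of $\mathcal{C}_n^3$ from Proposition \ref{theorem:distance3}. If anything, you are more careful than the paper, which asserts $\pi(\mathcal{Q}\setminus\mathcal{S})=\mathcal{C}_n^3$ ``by definition'' and dispatches the closure-removal by pointing to Proposition \ref{proposition:nozariski}, whereas you actually spell out the case-split for small $d(A,B)$ and the Euclidean-limit argument.
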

\begin{proof}
Inside of the $4n^2$-dimensional space
$\text{Mat}_{n\times n}^4$, we consider the set
$$\mathcal{Q}=\{(A,C,D,B)\,|\, A\leftrightarrow C\leftrightarrow D\leftrightarrow B\}.$$
Let $\mathcal{S}$ be the set of all quadruples $(A,C,D,B)$ where at least one of $C$ and $D$ is a scalar matrix.  Finally, let 
$$\pi:\text{Mat}_{n\times n}^4\rightarrow \text{Mat}_{n\times n}^2$$
be projection onto the $A$ and $B$ coordinates, so that $\pi(A,C,D,B)=(A,B)$.  Then, by definition,
$$\mathcal{C}^{3}_n=\pi(\mathcal{Q}\setminus \mathcal{S}).$$
As in the $d=2$ case, we can show that $\mathcal{Q}$ and $\mathcal{S}$ are affine varieties in $4n^2$-dimensional space.  Let $X$, $Z$, $W$, and $Y$ be matrices of variables corresponding to the coordinates of $A$, $C$, $D$, and $B$, respectively. Then $\mathcal{Q}=\textbf{V}(I)$. By conditions, $\textbf{V}(J_1)$ is the set of all quadruples $(A,C,D,B)$ where $C$ is a scalar matrix, and $\textbf{V}(J_2)$ is the set of all quadruples $(A,C,D,B)$ where $D$ is a scalar matrix.  Let $J=J_1\cap J_2$ be the intersection of the two ideals.  Then $\textbf{V}(J)=\textbf{V}(J_1)\cup\textbf{V}(J_2)$ \cite[Theorem 4.3.15]{CLO}.  Thus, $\mathcal{S}=\textbf{V}(J_1)\cup \textbf{V}(J)=\textbf{V}(J)$.  Using Propositions \ref{clo:projection} and \ref{clo:difference}, we have
$$\textbf{V}(I:J^\infty)=\overline{\mathcal{Q}\setminus \mathcal{S}}$$
and 
$$\textbf{V}((I:J^\infty)\cap k[X,Y])=\overline{\pi\left(\overline{\mathcal{Q}\setminus \mathcal{S}}\right)}.$$
As argument identical to that in the proof of Proposition \ref{proposition:nozariski} shows that we may remove the Zariski closures in $\overline{\pi\left(\overline{\mathcal{Q}\setminus \mathcal{S}}\right)}$, so that
$$\textbf{V}((I:J^\infty)\cap k[X,Y])=\overline{\pi\left(\overline{\mathcal{Q}\setminus \mathcal{S}}\right)}=\pi(\mathcal{Q}\setminus\mathcal{S})=\mathcal{C}_n^3,$$
as desired.	
\end{proof}

We now study the dimension of the distance-$3$ commuting set over an algebraically closed field.  In a forthcoming paper, we will provide a computation of its dimension as being precisely \(2n^2-2\); here we content ourselves with a lower bound.

\begin{lemma}\label{lemma:derogatory_variety}
Let $V\subset \textrm{Mat}_{n\times n}$ denote the set of all derogatory matrices.  We have that $V$ is a variety of dimension $n^2-3$.
\end{lemma}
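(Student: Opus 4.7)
The plan is to first exhibit $V$ as a variety via explicit defining equations, then compute $\dim V$ by parametrizing $V$ with a surjective morphism from an irreducible variety of dimension $n^2-3$ whose generic fiber is a single point.

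For the variety structure, I would use the matrix $M_A = A\otimes I - I\otimes A^T$ introduced in the example in Section \ref{section:distance2}.  Viewed as a linear endomorphism of the flattened matrix space, its null space is exactly the centralizer $C(A)$.  A matrix $A$ is non-derogatory precisely when $C(A) = k[A]$ has dimension equal to the degree of its minimal polynomial, namely $n$; equivalently, $A$ is derogatory iff $\dim C(A) > n$, iff $\textrm{rank}(M_A) \leq n^2 - n - 1$.  Thus $V$ is cut out by the $(n^2-n)\times(n^2-n)$ minors of $M_A$, a finite collection of polynomials in the entries of $A$, so $V$ is a variety.

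For the dimension, let $R_{n-2} = \{N \in \textrm{Mat}_{n\times n} \mid \textrm{rank}(N) \leq n-2\}$, which is a classical irreducible determinantal variety of dimension $(n-2)(n+2) = n^2 - 4$.  Consider the morphism
\[ \phi \colon k \times R_{n-2} \longrightarrow V, \qquad \phi(\lambda, N) = \lambda I + N. \]
The image lies in $V$ because $\dim \ker(N) \geq 2$ forces $\lambda$ to be an eigenvalue of $\lambda I + N$ of geometric multiplicity at least $2$, making $\lambda I + N$ derogatory.  Conversely, $\phi$ is surjective since any derogatory $A$ has some eigenvalue $\lambda$ of geometric multiplicity $\geq 2$ (using that $k$ is algebraically closed), and then $(\lambda, A - \lambda I) \mapsto A$.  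The source is irreducible of dimension $n^2 - 3$, so its image $V$ is irreducible (as the closed image of an irreducible set).  To conclude, I would exhibit a single point of $V$ whose $\phi$-fiber is zero-dimensional: for $A$ with Jordan form $\textrm{diag}(\lambda_0, \lambda_0, \mu_3, \ldots, \mu_n)$ with $\lambda_0, \mu_3, \ldots, \mu_n$ pairwise distinct, only $\lambda_0$ has geometric multiplicity $\geq 2$, so $\phi^{-1}(A) = \{(\lambda_0, A - \lambda_0 I)\}$ is a single point.  The fiber-dimension theorem then yields $\dim V = \dim(k \times R_{n-2}) - 0 = n^2 - 3$.

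The main obstacle is the fiber-dimension step: one must verify that exhibiting a single zero-dimensional fiber is enough to pin down the generic fiber dimension, and that $V$ really coincides with the image of $\phi$ so that irreducibility and dimension transfer correctly.  All underlying ingredients---the Kronecker-product description of the centralizer, the classical formula $\dim R_r = r(2n-r)$ for the determinantal variety, and the fiber-dimension theorem---are standard, but their combination requires the algebraic closure of $k$ so that every derogatory matrix genuinely has an eigenvalue of geometric multiplicity $\geq 2$ inside $k$.
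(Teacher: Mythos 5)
Your proposal is correct, but it takes a genuinely different route from the paper's in both halves of the argument. For the variety structure, the paper detects derogatory matrices through a linear-dependence condition on low powers of $A$ (flattening the powers into a matrix and imposing a rank condition via its minors), whereas you use the centralizer: $A$ is derogatory exactly when $\dim\ker(A\otimes I-I\otimes A^T)>n$, i.e.\ $\mathrm{rank}(M_A)\le n^2-n-1$, so $V$ is cut out by the size-$(n^2-n)$ minors of the linear matrix $M_A$. Note that your criterion rests on the classical fact that $\dim C(A)\ge n$ for every $A$, with equality precisely in the non-derogatory case; you should make that inequality explicit, since ``non-derogatory $\Leftrightarrow\dim C(A)=n$'' alone does not yield ``derogatory $\Leftrightarrow\dim C(A)>n$''. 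For the dimension, the paper parametrizes the dense subset of matrices similar to $\mathrm{diag}(\lambda_1,\lambda_1,\lambda_2,\ldots,\lambda_{n-1})$ (with distinct $\lambda_i$) by $\mathrm{GL}_n\times k^{n-1}$ and computes every fiber of the conjugation map exactly, obtaining $\mathrm{GL}_2\times(k^*)^{n-2}\times\mathfrak{S}_{n-2}$ of dimension $n+2$, and then applies \cite[Theorem 11.12]{har}; you instead write $V=\phi(k\times R_{n-2})$ with $\phi(\lambda,N)=\lambda I+N$ and import the classical irreducibility and dimension $n^2-4$ of the determinantal variety $R_{n-2}$. Your worry about the fiber-dimension step is easily resolved: since $V$ is closed (by your first step) and equals the image of the irreducible variety $k\times R_{n-2}$, every nonempty fiber of the surjective morphism $\phi$ has dimension at least $\dim(k\times R_{n-2})-\dim V$, so your single zero-dimensional fiber forces $\dim V\ge n^2-3$, while surjectivity gives $\dim V\le n^2-3$. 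What each approach buys: yours is shorter, works with one global parametrization of all of $V$ rather than a dense subset, and yields irreducibility of $V$ as a bonus, at the price of invoking standard determinantal-variety facts; the paper's argument is more self-contained, computing the fibers by hand and identifying the generic derogatory matrix concretely, which is in the same spirit as its fiber computation in Lemma \ref{lemma:z_dimensions}. Both arguments use algebraic closedness (Jordan form, respectively the existence in $k$ of an eigenvalue of geometric multiplicity at least two), as you correctly note.
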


\begin{proof}
First we argue that $V$ is a variety.  Recall that a matrix $A$ is derogatory if and only if its characteristic polynomial is not equal to its minimal polynomial.  This means that there is a nontrivial linear dependence between $I,A, A^2,\cdots,A^{n-2}$.  Flattening these $n-1$ square matrices and forming a $(n-1)\times n^2$ matrix $B$, the existence of such a nontrivial linear dependence is equivalent to $B$ having rank at most $n-2$.  This rank condition is equivalent to the vanishing of all size $n-1$ minors of $B$, which are polynomials in the entries of $A$.   The set of all derogatory matrices is thus a variety defined by these polynomials.

Consider the set $V'$ of all matrices similar to $\textrm{diag}(\lambda_1,\lambda_1,\lambda_2,\ldots,\lambda_{n-1})$ for some pairwise distinct $\lambda_1,\ldots,\lambda_{n-1}\in k$.  We claim that $V'$ is dense in $V$. To see this, we can write any derogatory matrix in Jordan canonical form, and then perturb the coefficients to guarantee $n-1$ distinct eigenvalues.  Now we observe that $V'$ is the image of the map $\varphi\colon\textrm{GL}_n\times k^{n-1}\rightarrow \textrm{Mat}_{n\times n}$ that sends $(g,(\lambda_1,\ldots,\lambda_{n-1}))$ to $g\, \textrm{diag}(\lambda_1,\lambda_1,\lambda_2,\ldots,\lambda_{n-1})g^{-1}$.  If $g\, \textrm{diag}(\lambda_1,\lambda_1,\lambda_2,\ldots,\lambda_{n-1})g^{-1}=h\, \textrm{diag}(\mu_1,\mu_1,\mu_2,\ldots,\mu_{n-1})h^{-1},$
then $\mu_1=\lambda_1$ and the sequence $(\mu_2,\ldots ,\mu_{n-1})$ is a permutation of $(\lambda_2,\ldots ,\lambda_{n-1})$. As in the proof of Lemma \ref{lemma:z_dimensions} we then see that there exists a permutation matrix $\sigma\in \mathfrak{S}_{n-2}$ such that for $\rho=\begin{pmatrix}
I &0\\
0 & \sigma
\end{pmatrix}\in \mathfrak{S}_n$ the matrix $\rho^{-1}h^{-1}g$ commutes with $ \, \textrm{diag}(\lambda_1,\lambda_1,\lambda_2,\ldots,\lambda_{n-1})$ 
and is hence of the form
$\begin{pmatrix}
S&0\\
0 & D
\end{pmatrix}$ for some $2\times 2$ invertible matrix $S$ and some $(n-2)\times (n-2)$ invertible diagonal matrix $D$. Hence, $h=g
\begin{pmatrix}
S^{-1} & 0\\
0& D^{-1}
\end{pmatrix}
\begin{pmatrix}
I& 0\\
0 & \sigma^{-1}
\end{pmatrix}$, which shows that every fibre of $\varphi$ is isomorphic to $\textrm{GL}_2\times (k^*)^{n-{2}}\times \mathfrak{S}_{n-2}$, where $k^*$ denotes the nonzero elements of $k$.  This has dimension $n+2$, and so  by \cite[Theorem 11.12]{har} it follows that $\dim(V')=n^2-3$, as desired.
\end{proof}

It immediately follows from this lemma that $\dim(\mathcal{C}_n^3)\geq 2n^2-3$, since $V\times\textrm{Mat}_{n\times n}\subset \mathcal{C}_n^3$.  The following result gives us a sharper bound.

\begin{proposition}\label{c3_lower_bound}
We have $\dim(\mathcal{C}_n^3)\geq 2n^2-2$.
\end{proposition}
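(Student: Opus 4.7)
The plan is to exhibit a $(2n^2-2)$-dimensional irreducible subvariety of $\mathcal{C}_n^3$ via an incidence/projection construction, in the spirit of Lemma~\ref{lemma:z_dimensions}. The geometric idea is to encode commuting chains $A\leftrightarrow C\leftrightarrow D\leftrightarrow B$ in which both $C$ and $D$ are rank-$1$ matrices chosen so that $CD=DC=0$; with this orthogonality built in, the commutation of $C$ and $D$ is automatic, and each of $A,B$ need only commute with a rank-$1$ matrix individually. Concretely, inside $\textrm{Mat}_{n\times n}^2\times(\mathbb{P}^{n-1})^4$ I would consider the Zariski-closed set
\begin{align*}
\mathcal{I} = \{\,(A,B,[v],[w],[v'],[w']) \,:\, & A\,vw^T = vw^TA,\ B\,v'(w')^T = v'(w')^TB,\\
& w^Tv' = 0,\ (w')^Tv = 0\,\}.
\end{align*}
For any point of $\mathcal{I}$, the rank-$1$ matrices $C:=vw^T$ and $D:=v'(w')^T$ are non-scalar, commute with $A$ and $B$ respectively, and satisfy $CD = (w^Tv')v(w')^T = 0 = ((w')^Tv)v'w^T = DC$; thus $d(A,B)\le 3$ and the projection $\pi\colon\mathcal{I}\to\textrm{Mat}_{n\times n}^2$ lands in $\mathcal{C}_n^3$.

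The first main step is to compute $\dim\mathcal{I}=2n^2-2$ by fibering $\mathcal{I}$ over the last four coordinates. The image of that projection is the closed set $\mathcal{B}=\{(v,w,v',w') : w^Tv' = (w')^Tv = 0\}\subset(\mathbb{P}^{n-1})^4$; its two bihomogeneous equations are independent (they constrain the disjoint pairs $(w,v')$ and $(w',v)$), giving $\dim\mathcal{B} = 4(n-1)-2 = 4n-6$, and irreducibility of $\mathcal{B}$ follows by projecting to $(v,v')$ and recognising a $(\mathbb{P}^{n-2})^2$-bundle over the open locus where $v,v'$ are linearly independent. Over any point of $\mathcal{B}$, the fiber of $\mathcal{I}$ is the affine linear space $Z(vw^T)\times Z(v'(w')^T)$, where $Z(\cdot)$ denotes the centralizer in $\textrm{Mat}_{n\times n}$. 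A brief direct calculation --- splitting into the cases $w^Tv\neq 0$ (so $vw^T$ is similar to the rank-$1$ idempotent $e_1e_1^T$) and $w^Tv=0$ (so $vw^T$ is similar to the nilpotent $e_1e_2^T$) --- yields $\dim Z(vw^T) = n^2-2n+2$ in both cases. Summing, $\dim\mathcal{I}=(4n-6)+2(n^2-2n+2)=2n^2-2$, and since the base and all fibers are irreducible of constant dimension, $\mathcal{I}$ is itself irreducible.

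The second main step is to show $\dim\overline{\pi(\mathcal{I})}=\dim\mathcal{I}$ by producing one point of $\pi(\mathcal{I})$ whose fiber under $\pi$ is finite; upper semicontinuity of fiber dimension together with irreducibility of $\mathcal{I}$ then forces the generic fiber to be zero-dimensional. I would take $A_0=B_0=\mathrm{diag}(1,2,\ldots,n)$. Since $A_0$ has $n$ distinct eigenvalues, Theorem~\ref{theorem:equivalences} identifies its centralizer with $k[A_0]$, so the rank-$1$ matrices commuting with $A_0$ are precisely the nonzero scalar multiples of $e_ie_i^T$ for $i=1,\ldots,n$, and the same is true for $B_0$. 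Hence $\pi^{-1}(A_0,B_0)\cap\mathcal{I}$ is in bijection with pairs $(i,j)\in\{1,\ldots,n\}^2$ satisfying $e_i^Te_j = e_j^Te_i = 0$, i.e., with pairs $i\ne j$, giving exactly $n(n-1)$ points --- a finite, hence zero-dimensional, set. Since $\mathcal{C}_n^3$ is Zariski-closed we conclude $\overline{\pi(\mathcal{I})}\subset\mathcal{C}_n^3$ is an irreducible subvariety of dimension $2n^2-2$, and therefore $\dim\mathcal{C}_n^3\ge 2n^2-2$.

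I expect the main obstacle to be the fiber-dimension computation in the first step: the \emph{uniform} answer $\dim Z(vw^T)=n^2-2n+2$ across the idempotent and nilpotent rank-$1$ cases is precisely what keeps $\mathcal{I}\to\mathcal{B}$ equidimensional and forces $\mathcal{I}$ to be irreducible of the claimed dimension. Once this is established, both the dimension of $\mathcal{I}$ and the dimension-preservation under $\pi$ reduce to straightforward counts.
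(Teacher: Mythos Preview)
Your proof is correct and takes a genuinely different route from the paper's. The paper argues by contradiction: starting from the easy lower bound $2n^2-3$ coming from $V\times\textrm{Mat}_{n\times n}$ (Lemma~\ref{lemma:derogatory_variety}), it assumes equality, so that some irreducible component of $V\times\textrm{Mat}_{n\times n}$ is simultaneously a component $\mathcal{C}$ of $\mathcal{C}_n^3$; it then perturbs a point $(A,B)\in\mathcal{C}$ in the $A$-direction along a non-derogatory matrix $E$ chosen inside the centralizer of a linking matrix $C$, so that $(A+\lambda E,B)$ stays in $\mathcal{C}_n^3$ but leaves $V\times\textrm{Mat}_{n\times n}$ for generic $\lambda$, contradicting maximality of $\mathcal{C}$. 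You instead construct directly a $(2n^2-2)$-dimensional irreducible subvariety by parametrizing length-$3$ chains through pairs of ``orthogonal'' rank-$1$ matrices, very much in the spirit of Lemma~\ref{lemma:z_dimensions}. The paper's argument is shorter and purely structural; yours is constructive, produces an explicit geometric witness to the bound, and dovetails with the rank-$1$ commuting-chain heuristic used elsewhere in the paper.

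One small point to make precise: the implication ``base irreducible and all fibers irreducible of constant dimension $\Rightarrow$ total space irreducible'' is not a general theorem without an extra hypothesis such as properness or flatness. In your situation it holds for a specific reason: over each affine chart of $(\mathbb{P}^{n-1})^4$ the fiber of $\mathcal{I}\to\mathcal{B}$ is the kernel of the linear map $A\mapsto Avw^T-vw^TA$ (and similarly for $B$), and your centralizer computation shows this map has constant rank $2n-2$. A morphism of trivial bundles of constant rank has a kernel that is a vector subbundle, so $\mathcal{I}\to\mathcal{B}$ is locally trivial with affine-space fibers, and irreducibility of $\mathcal{I}$ follows from irreducibility of $\mathcal{B}$. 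It is worth saying this explicitly, since it is precisely the uniformity $\dim Z(vw^T)=n^2-2n+2$ in both the idempotent and nilpotent cases that makes the bundle argument go through.
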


\begin{proof}
As previously remarked, we have $\dim(\mathcal{C}_n^3)\geq 2n^2-3$ since $V\times\textrm{Mat}_{n\times n}\subset \mathcal{C}_n^3$.  Suppose for the sake of contradiction that $\dim(\mathcal{C}_n^3)= 2n^2-3$. The dimension of a variety is by definition the largest possible dimension of an irreducible component of that variety. Therefore there exists an irreducible component $\mathcal{C}$ of $V\times \textrm{Mat}_{n\times n}$ of dimension $2n^2-3$. It is an irreducible subvariety of $\mathcal{C}_n^3$, therefore it is contained in an irreducible component $\widetilde{\mathcal{C}}$ of $\mathcal{C}_n^3$. As $\dim \mathcal{C}_n^3=2n^2-3$ by our assumption, it follows that $\dim \widetilde{\mathcal{C}}=2n^2-3$. Now, $\mathcal{C}$ and $\widetilde{\mathcal{C}}$ are varieties of the same dimension, with $\widetilde{\mathcal{C}}$ irreducible and $\mathcal{C}\subset\widetilde{\mathcal{C}}$, so we get $\widetilde{\mathcal{C}}=\mathcal{C}$ by \cite[Proposition 9.4.10]{CLO}. It follows that $\mathcal{C}$ is an irreducible component of $V\times \textrm{Mat}_{n\times n}$ and of $\mathcal{C}_n^3$.  Take an arbitrary pair $(A,B)\in\mathcal{C}$.  Thus there exist non-scalar matrices $C$ and $D$ such that $A\leftrightarrow C\leftrightarrow D \leftrightarrow B$.  The centralizer of any matrix contains a non-derogatory matrix, as if $J=J_{m_1}(\lambda_1)\oplus\cdots\oplus J_{m_t}(\lambda_t)$ is a matrix in the Jordan canonical form, then $J_{m_1}(\mu_1)\oplus\cdots\oplus J_{m_t}(\mu_t)$ commutes with $J$ and it is non-derogatory if the eigenvalues $\mu_1,\ldots ,\mu_t$ are pair-wise distinct. We can therefore choose a non-derogatory matrix $E$ commuting with $C$.  It follows that $(A+\lambda E,B)\in\mathcal{C}_n^3$ for every $\lambda\in k$.  In Lemma \ref{lemma:derogatory_variety} we have shown that the set of non-derogatory matrices is open in $\textrm{Mat}_{n\times n}$ in the Zariski topology. As $E$ is non-derogatory, it therefore follows that $A+\lambda E$ is non-derogatory  for all but finitely many scalars $\lambda\in k$. For each such $\lambda$ the pair $(A+\lambda E,B)$ does not belong to $\mathcal{C}$.  Therefore there exists another irreducible component $\mathcal{C}'$ of $\mathcal{C}_n^3$, different from $\mathcal{C}$, such that $(A+\lambda E,B)\in \mathcal{C}'$ for infinitely many $\lambda\in k$.  In the Zariski topology the closure of an infinite number of points on a line is equal to the whole line. As $\mathcal{C}'$ is closed, it therefore has to contain $(A+\lambda E,B)$ for each $\lambda \in k$. In particular $(A,B)\in \mathcal{C}'$, so $\mathcal{C}\subsetneq \mathcal{C}'$.  This is a contradiction to $\mathcal{C}$ being a component of $\mathcal{C}_{n}^3$.
\end{proof}

We close with a brief study of the distance-$3$ commuting set over the field of real numbers.  We start with the following example.

\begin{example}\label{example:counterexample_4x4} Let $A=\left( \begin{smallmatrix} 
1 & -1 &0& 3\\
-1 & 1 &0& -1\\
-2 & 2& 0& -4\\
0 &  0& 0& -2
\end{smallmatrix} \right),  
B =\left( \begin{smallmatrix} 
1&1 &0 & 0\\
-1& 1 &0 & 0\\
0 & 0 &-1 &1\\
0 & 0 &-1 &-1\end{smallmatrix} \right)\in \mathrm{Mat}_{4\times 4}(\mathbb{R}).$ We claim that $d_{\mathbb{R}}(A,B)=4$ and $d_{\mathbb{C}}(A,B)=3.$  Indeed, $A$ is of rank 2 with the eigenvalues $\{2,-2,0,0\}$, so it is  derogatory, and by Theorem \ref{theorem:equivalences} $d_{\mathbb{C}}(A,B)\leq 3$.

Assume now that $ A \leftrightarrow C \leftrightarrow D \leftrightarrow B$ for some $C,D \in \mathrm{Mat}_{4\times 4}(\mathbb{R})$. Then direct computations show that $C$ is of the form
$\left( \begin{smallmatrix} a &        b  &     c   &                       d\\
b+2c & a-2c & c &                          b+c\\
e &        f  &      a-b-2c+\frac e2-\frac  f2 & b+c-d+\frac  e2+\frac f2\\
0   &      0    &   0            &              a-c-d\end{smallmatrix} \right)$
and $D$ is of the form
$\left( \begin{smallmatrix} a' &  b' & 0 & 0\\
-b' &a' & 0 & 0\\
0 &  0 &  c' & d'\\
0 &  0 & -d' &c'\end{smallmatrix} \right).$
It can be  verified straightforwardly  that no two non-scalar such real matrices $C$ and $D$ commute.  Thus $d_\mathbb{R}(A,B)\geq 4$.  It follows that $d_\mathbb{C}(A,B)=3$ and $d_\mathbb{R}(A,B)=4$. 
\end{example}

This example, combined with the following lemma, implies Proposition \ref{prop:c34r}, which states that $\mathcal{C}_4^3(\mathbb{R})$ is \emph{not} a variety.

\begin{lemma}\label{lemma:nonvariety}
Let $k$ be an infinite field, and suppose that there exist $n\times n$ matrices $A$ and $B$ with $d_k(A,B)=4$ where $A$ has rank at most $n-2$.  Then $\mathcal{C}_n^3(k)$ is not a variety.
\end{lemma}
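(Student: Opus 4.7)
The plan is by contradiction. Suppose $\mathcal{C}_n^3(k)$ is a variety, and let
\[
S := \{B' \in \text{Mat}_{n\times n}(k) : d_k(A, B') \leq 3\},
\]
which is a variety in $\text{Mat}_{n\times n}(k)$ as the intersection of $\mathcal{C}_n^3(k)$ with the affine subspace $\{A\}\times \text{Mat}_{n\times n}(k)$. I will show $S$ is Zariski dense in $\text{Mat}_{n\times n}(k)$; since affine space is irreducible, this forces $S = \text{Mat}_{n\times n}(k)$, contradicting $B \notin S$ (which holds because $d_k(A,B) = 4$).

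The containment step is the claim: every $B' \in \text{Mat}_{n\times n}(k)$ with a $k$-rational eigenvalue lies in $S$. Given such a $B'$ with eigenvalue $\mu\in k$, choose nonzero right and left eigenvectors $x, y \in k^n$ for $\mu$ (the left eigenvector exists since $\mu$ is also a root of the characteristic polynomial of $B'^T$, which equals that of $B'$). Then $D := xy^T$ is a non-scalar rank-one matrix satisfying $B'D = DB' = \mu D$. The hypothesis $\text{rank}(A)\leq n-2$ gives $\dim \ker A \geq 2$ and $\dim \ker A^T \geq 2$, so I can pick nonzero $v\in \ker A \cap y^{\perp}$ and nonzero $w\in \ker A^T \cap x^{\perp}$ (each intersection has dimension at least $1$ as a hyperplane cut of a $\geq 2$-dimensional subspace). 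Then $C := vw^T$ is a non-scalar rank-one matrix with $AC = CA = 0$, and
\[
CD = (w^T x)\,vy^T = 0, \qquad DC = (y^T v)\,xw^T = 0.
\]
Thus the chain $A \leftrightarrow C \leftrightarrow D \leftrightarrow B'$ has non-scalar interior vertices, yielding $d_k(A, B') \leq 3$.

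The density step shows $E := \{B' \in \text{Mat}_{n\times n}(k) : B' \text{ has a } k\text{-rational eigenvalue}\}$ is Zariski dense. Suppose $f \in k[X]$ (with $X = (x_{ij})$ a matrix of variables) vanishes on $E$. For each $\mu \in k$, the hypersurface $\{B' : \det(B' - \mu I) = 0\}$ lies in $E$, so $f$ vanishes on it. Since $\det(X - \mu I)$ is irreducible in $k[X]$ (obtained from the classical irreducible polynomial $\det X$ by an invertible linear change of variables), $\det(X - \mu I)\mid f$. The polynomials $\det(X-\mu I)$ for distinct $\mu \in k$ are pairwise non-associated: the coefficient of $X_{11} X_{22} \cdots X_{n-1, n-1}$ in $\det(X - \mu I)$ equals $-\mu$, and the top monomial $X_{11}\cdots X_{nn}$ has coefficient $1$, so two such polynomials are scalar multiples of each other only when $\mu=\mu'$. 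Since $k$ is infinite, $f$ would be divisible by infinitely many pairwise non-associated irreducibles in the UFD $k[X]$, forcing $f = 0$. Hence $E$, and therefore $S$, is Zariski dense.

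The main technical obstacle is the containment step: I must \emph{simultaneously} produce nonzero $v \in \ker A$ with $y^T v = 0$ and nonzero $w \in \ker A^T$ with $w^T x = 0$. This is only guaranteed by $\dim \ker A \geq 2$ and $\dim \ker A^T \geq 2$, which is precisely the content of the rank hypothesis $\text{rank}(A) \leq n-2$. If instead $\text{rank}(A) = n-1$, the kernels would be one-dimensional and the intersections could easily be zero, breaking the construction of the commuting chain and the proof.
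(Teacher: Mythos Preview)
Your proof is correct and follows essentially the same strategy as the paper: assume $\mathcal{C}_n^3(k)$ is a variety, slice at the fixed matrix $A$, and derive a contradiction by showing that the set of $B'$ with $d_k(A,B')\le 3$ is Zariski dense via the same rank-one commuting chain $A\leftrightarrow C\leftrightarrow D\leftrightarrow B'$. The one substantive difference is the density step: the paper invokes its earlier argument that $k$-diagonalizable matrices are Zariski dense (via the conjugation map $\textrm{GL}_n(k)\times k^n\to\textrm{Mat}_{n\times n}(k)$), whereas you give a self-contained algebraic argument using irreducibility of $\det(X-\mu I)$ and unique factorization, which is a nice alternative and avoids the appeal to the earlier proof.
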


\begin{proof}   Suppose for the sake of contradiction that $\mathcal{C}_n^3(k)$ is a variety.  Then so too is its intersection with any other variety.  Let $V$ be the variety obtained by intersecting $\mathcal{C}_n^3(k)$ with the codimension-$n^2$ linear space $L$ defined by fixing the first $n^2$ coordinates to be our given matrix $A$.  Identifying $L$ with $k^{n^2}$, we can think of $V$ as a variety in $n^2$-dimensional affine space.  By our hypotheses, it is a proper subvariety; for instance, it does not contain the point $B$.

By the argument from the proof of Proposition \ref{prop:d_geq_4}, the set of $n\times n$ matrices over an infinite field $k$ that have an eigenvalue in $k$ forms a Zariski-dense set.  Let $B'$ be such a matrix.  Then $B'$ commutes with some rank $1$ matrix $D$.  Since $A$ is an $n\times n$ matrix of rank at most $n-2$, there exists a rank $1$ matrix $C=uv^T$ that commutes with both $A$ and $D$: simply choose $u$ to be orthogonal to the row space of $A$ and the row space of $D$, and choose $v$ to be orthogonal to the column space of $A$ and the column space of $D$.  Thus we have a commuting chain $A\leftrightarrow C\leftrightarrow D\leftrightarrow B'$, where both $C$ and $D$ have rank $1$ and are therefore nonscalar.  Thus $d_\mathbb{R}(A,B')\leq 3$.  Since $A$ is distance at most $3$ from all matrices in a Zariski-dense set, we have that $V$ is Zariski-dense in in $L$.  This is a contradiction to the fact that it is a proper subvariety.
\end{proof}

It is currently an open problem whether $\mathcal{C}_n^3(\mathbb{R})$ is a variety for $n=3$ and for $n\geq 5$.  One possible tool for approaching this is the following result.

\begin{proposition}\label{theorem:Rdistance3} Let $k$ be a field and $n\ge 3$ such that $d_k(A,B)=d_{\overline{k}}(A,B)$ for all $(A,B)\in\textrm{Mat}_{n\times n}(k)^2$. The distance-$3$ commuting set $\mathcal{C}_n^3(k)$ is an affine variety.
\end{proposition}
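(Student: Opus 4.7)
The plan is to realize $\mathcal{C}_n^3(k)$ as the intersection of the known affine variety $\mathcal{C}_n^3(\overline{k})$ with the subspace $\mathrm{Mat}_{n\times n}(k)^2 \subset \mathrm{Mat}_{n\times n}(\overline{k})^2$, and then invoke two results already established in the paper: Proposition \ref{theorem:distance3} to guarantee that $\mathcal{C}_n^3(\overline{k})$ is an affine variety over the algebraically closed field $\overline{k}$, and Lemma \ref{lemma:realclosed} to descend this structure to the base field $k$.

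First I would establish the set-theoretic identity
\[
\mathcal{C}_n^3(k) \;=\; \mathcal{C}_n^3(\overline{k}) \,\cap\, \mathrm{Mat}_{n\times n}(k)^2.
\]
A pair $(A,B) \in \mathrm{Mat}_{n\times n}(k)^2$ lies in $\mathcal{C}_n^3(k)$ if and only if $d_k(A,B) \leq 3$, and lies in the right-hand side if and only if $d_{\overline{k}}(A,B) \leq 3$. These two conditions are identified exactly by the hypothesis $d_k(A,B) = d_{\overline{k}}(A,B)$.

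Next, I would apply Proposition \ref{theorem:distance3} (with the algebraically closed field taken to be $\overline{k}$) to conclude that $\mathcal{C}_n^3(\overline{k})$ is an affine variety in $\overline{k}^{2n^2}$. Since $\overline{k}/k$ is an algebraic extension, Lemma \ref{lemma:realclosed} applies directly with $K = \overline{k}$ and $V = \mathcal{C}_n^3(\overline{k})$, showing that $V \cap k^{2n^2}$ is an affine variety in $k^{2n^2}$. Combined with the set-theoretic identity above, this yields that $\mathcal{C}_n^3(k)$ is an affine variety.

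There is really no significant obstacle here: the proposition is essentially a corollary that packages Proposition \ref{theorem:distance3} and Lemma \ref{lemma:realclosed} under the distance-compatibility hypothesis. The only subtlety worth flagging is that this hypothesis is precisely what makes the identification of the two sets possible; without it, the existence of a distance-$3$ commuting chain for $(A,B)$ over $\overline{k}$ need not imply the existence of such a chain with entries in $k$, as illustrated by Example \ref{example:counterexample_4x4}, which is exactly the obstruction that prevents the analogous result from being unconditional over $\mathbb{R}$.
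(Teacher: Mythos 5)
Your proof is correct and follows exactly the paper's argument: the hypothesis gives $\mathcal{C}_n^3(k)=\mathcal{C}_n^3(\overline{k})\cap \textrm{Mat}_{n\times n}(k)^2$, Proposition \ref{theorem:distance3} makes $\mathcal{C}_n^3(\overline{k})$ a variety, and Lemma \ref{lemma:realclosed} descends it to $k$. Nothing is missing.
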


\begin{proof}
From our assumption on $d_k$ and $d_{\overline{k}}$, we have  that
\[\mathcal{C}_n^3(k)=\mathcal{C}_n^3(\overline{k})\cap \textrm{Mat}_{n\times n}(k)^2.\]
By Lemma \ref{lemma:realclosed}, if $k$ is a field and $V\subset {\overline{k}}^m$ is an affine variety, then so is $V\cap k^m$.  We know by Proposition \ref{theorem:distance3} that $\mathcal{C}_n^3(\overline{k})$ is an affine variety, so its intersection with the affine space $\textrm{Mat}^2_{n\times n}(k)$ is as well.  We conclude that $\mathcal{C}_n^3(k)$ is an affine variety.
\end{proof}

We remark that if $d_{\overline{k}}(A,B)\le 2$, we have $d_k(A,B)=d_{\overline{k}}(A,B)$.  This follows by definition if $d_{\overline{k}}(A,B)\leq 1$.  We also have $d_{\overline{k}}(A,B)\le 2$ if and only if the rank of the matrix $M_{A,B}$ from Section \ref{section:distance2} is at most $n^2-2$; since rank is independent of field, this is equivalent to $d_{k}(A,B)\le 2$. It follows that $d_{\overline{k}}(A,B)=2$ if and only if $d_k(A,B)=2$.

Thus the only way to have  $d_k(A,B)\neq d_{\overline{k}}(A,B)$ is with $d_k(A,B)\geq 4$ and $d_k(A,B)>d_{\overline{k}}(A,B)\geq 3$.  This can indeed happen, as illustrated in Example \ref{example:counterexample_4x4}. At present we do not know of any instances of $k\neq \overline{k}$ where $d_k(A,B) =  d_{\overline{k}}(A,B)$ for all $A,B\in \textrm{Mat}^2_{n\times n}(k)$ at least for some fixed $n>2$.  The following example shows that we do not always have $d_\mathbb{R}(A,B)=d_{\mathbb{C}}(A,B)$ for the case of $3\times 3$ matrices.  This does not immediately imply that  $\mathcal{C}_3^3(\mathbb{R})$ is not a variety; rather, it indicates that any proof that it is a variety cannot rely on the previous proposition.

\begin{example}\label{example:counterexample_3x3} Let $A= \begin{pmatrix} 
0 & 1 & 0\\
-1 & 0 & 0\\
0 & 0 & 0
\end{pmatrix},  
B =\begin{pmatrix} 
0 & -1 & 0\\
1 & 0 & 1\\
0 & 0 & 0
\end{pmatrix}\in \mathrm{Mat}_{3\times 3}(\mathbb{R}).$ We claim that $d_{\mathbb{R}}(A,B)=4$ and $d_{\mathbb{C}}(A,B)=3.$ Assume that $ A \leftrightarrow C \leftrightarrow D \leftrightarrow B$ for some $C$ and $D$. Then direct computations show that $C$ is of the form
$\begin{pmatrix}
a & b & 0\\
-b & a & 0\\
0 & 0  & c\end{pmatrix}$
and $D$ is of the form
$\begin{pmatrix}
a' &  b' & c'\\
-b' & a' & -b'\\
0 &  0 &  a'-c'\end{pmatrix}.$
A straightforward calculation shows that no two non-scalar such real matrices $C$ and $D$ commute, so $d_\mathbb{R}(A,B)= 4$. On the other hand, the complex matrices of this form
$C=\begin{pmatrix}
0 & 1 & 0\\
-1 & 0 & 0\\
0 & 0 & i
\end{pmatrix}$ and $D=\begin{pmatrix}
0 & 1 & i\\
-1 & 0 & -1\\
0 & 0 & -i
\end{pmatrix}$ commute, which shows that $d_\mathbb{C}(A,B)=3$.
\end{example}



\medskip

\noindent \textbf{Acknowledgements.}  The first author was supported by the Clare Boothe Luce Program. The investigations of the second author are financially supported by RSF grant 17-11-01124.  The fourth author is partially supported by Slovenian Research Agency (ARRS) grants N1-0103 and P1-0222. The authors would like to thank Ngoc Tran, who organised the discussions that led to this project; as well as Spencer Backman and Martin Ulirsch, who took part in those discussions.  They also thank Jan Draisma, Tom Garrity, Jake Levinson, and Eyal Markman for many helpful discussions and ideas about higher distance commuting varieties.

 \bibliographystyle{abbrv}

\begin{thebibliography}{10}

\bibitem{abm}
S.~Akbari, H.~Bidkhori, and A.~Mohammadian.
\newblock Commuting graphs of matrix algebras.
\newblock {\em Comm. Algebra}, 36(11):4020--4031, 2008.

\bibitem{AGHM}
S.~Akbari, M.~Ghandehari, M.~Hadian, and A.~Mohammadian.
\newblock On commuting graphs of semisimple rings.
\newblock {\em Linear Algebra Appl.}, 390:345--355, 2004.

\bibitem{AMRR}
S.~Akbari, A.~Mohammadian, H.~Radjavi, and P.~Raja.
\newblock On the diameters of commuting graphs.
\newblock {\em Linear Algebra Appl.}, 418(1):161--176, 2006.

\bibitem{AR}
S.~Akbari and P.~Raja.
\newblock Commuting graphs of some subsets in simple rings.
\newblock {\em Linear Algebra Appl.}, 416(2-3):1038--1047, 2006.

\bibitem{CLO}
D.~A. Cox, J.~Little, and D.~O'Shea.
\newblock {\em Ideals, varieties, and algorithms}.
\newblock Undergraduate Texts in Mathematics. Springer, Cham, fourth edition,
  2015.
\newblock An introduction to computational algebraic geometry and commutative
  algebra.

\bibitem{Cullen}
C.~G. Cullen.
\newblock {\em Matrices and linear transformations}.
\newblock Dover Books on Advanced Mathematics. Dover Publications, Inc., New
  York, second edition, 1990.

\bibitem{dgko}
G.~Dolinar, A.~Guterman, B.~Kuzma, and P.~Oblak.
\newblock Extremal matrix centralizers.
\newblock {\em Linear Algebra Appl.}, 438(7):2904--2910, 2013.

\bibitem{DKO}
G.~Dolinar, B.~Kuzma, and P.~Oblak.
\newblock On maximal distances in a commuting graph.
\newblock {\em Electron. J. Linear Algebra}, 23:243--256, 2012.

\bibitem{gur}
R.~M. Guralnick.
\newblock A note on commuting pairs of matrices.
\newblock {\em Linear and Multilinear Algebra}, 31(1-4):71--75, 1992.

\bibitem{GS}
R.~M. Guralnick and B.~A. Sethuraman.
\newblock Commuting pairs and triples of matrices and related varieties.
\newblock {\em Linear Algebra Appl.}, 310(1-3):139--148, 2000.

\bibitem{har}
J.~Harris.
\newblock {\em Algebraic geometry: a first course}, volume 133 of {\em Graduate
  Texts in Mathematics}.
\newblock Springer-Verlag, New York, 1992.

\bibitem{hilbert}
D.~Hilbert.
\newblock {\"U}ber die {T}heorie der algebraischen {F}ormen.
\newblock {\em Math. Ann.}, 36(4):473--534, 1890.

\bibitem{hj2}
R.~A. Horn and C.~R. Johnson.
\newblock {\em Topics in Matrix Analysis}.
\newblock Cambridge University Press, 1991.

\bibitem{HJ}
R.~A. Horn and C.~R. Johnson.
\newblock {\em Matrix Analysis}.
\newblock Cambridge University Press, second edition, 2017.

\bibitem{MT}
T.~S. Motzkin and O.~Taussky.
\newblock Pairs of matrices with property {$L$}. {II}.
\newblock {\em Trans. Amer. Math. Soc.}, 80:387--401, 1955.

\bibitem{mum}
D.~Mumford.
\newblock {\em The red book of varieties and schemes}, volume 1358 of {\em
  Lecture Notes in Mathematics}.
\newblock Springer-Verlag, Berlin, expanded edition, 1999.
\newblock Includes the Michigan lectures (1974) on curves and their Jacobians,
  With contributions by Enrico Arbarello.

\bibitem{shitov}
Y.~N. Shitov.
\newblock Distances on the commuting graph of the ring of real matrices.
\newblock {\em Mat. Zametki}, 103(5):765--768, 2018.

\bibitem{tsat}
M.~Tsatsomeros.
\newblock A criterion for the existence of common invariant subspaces of
  matrices.
\newblock {\em Linear Algebra Appl.}, 322(1-3):51--59, 2001.

\end{thebibliography}

\end{document}